\newcommand{\sfac}{0.49\textwidth}%
\title{Computable error estimates for finite element approximations of
  elliptic partial differential equations with rough stochastic
  data\thanks{The research was supported by the Swedish Research
    Council grant VR-621-2014-4776, and the Swedish e-Science Research
    Center. It was carried out while E.J.~Hall was a G\"oran
    Gustafsson postdoctoral fellow at KTH Royal Institute of
    Technology. R.~Tempone is a member of the KAUST Strategic Research
    Initiative, Center for Uncertainty Quantification in Computational
    Sciences and Engineering. R.~Tempone received support from the
    KAUST CRG3 Award Ref: 2281. H.~Hoel received support by Norges
    Forskningsr{\aa}d, research project 214495 LIQCRY.}}%
\author{Eric Joseph Hall\footnotemark[2]%
  \and H\r{a}kon Hoel\footnotemark[3]%
  \and Mattias Sandberg\footnotemark[4]%
  \and Anders Szepessy\footnotemark[4]%
  \and Ra\'ul Tempone\footnotemark[5]}%
\newtheorem{remark}{\emph{Remark}} 
\newcommand{\RR}{\mathbf{R}}%
\newcommand{\NN}{\mathbf{N}}%
\DeclareMathOperator{\mexp}{\mathbf{E}}%
\newcommand{\prob}{P}%
\newcommand{\ind}[1]{\boldsymbol{1}_{ #1}}
\DeclareMathOperator{\cov}{Cov}%
\newcommand{\as}{\prob\text{-}\mathrm{a.s.}}%
\DeclareMathOperator{\divergence}{div}%
\DeclareMathOperator{\dd}{d\!}%
\newcommand{\abs}[1]{\left| #1 \right|}%
\newcommand{\E}[1]{\mathbf{E}\left[#1\right]}%
\newcommand{\pr}[1]{\left( #1 \right)}%
\begin{document}
\maketitle%
\slugger{sisc}{xxxx}{xx}{x}{x--x}%

\renewcommand{\thefootnote}{\fnsymbol{footnote}}%

\footnotetext[2]{Department of Mathematics and Statistics, University
  of Massachusetts Amherst, Amherst, MA 10030, United States
  (\texttt{hall@math.umass.edu}).}%
\footnotetext[3]{Department of Mathematics, University of Oslo, 0316
  Oslo, Norway (\texttt{haakonah@math.uio.no}).}%
\footnotetext[4]{Department of Mathematics, KTH Royal Institute of
  Technology, 100 44 Stockholm, Sweden (\texttt{msandb@kth.se},
  \texttt{szepessy@kth.se}).}%
\footnotetext[4]{Applied Mathematics and Computational Science, King
  Abdullah University of Science and Technology, Thuwal, 23955-6900,
  Kingdom of Saudi Arabia. (\texttt{raul.tempone@kaust.edu.sa}).}%

\renewcommand{\thefootnote}{\arabic{footnote}}%

\begin{abstract}
  We derive computable error estimates for finite element
  approximations of linear elliptic partial differential equations
  (PDE) with rough stochastic coefficients. In this setting, the exact
  solutions contain high frequency content that standard \emph{a
    posteriori} error estimates fail to capture. We propose
  goal-oriented estimates, based on local error indicators, for the
  pathwise Galerkin and expected quadrature errors committed in
  standard, continuous, piecewise linear finite element
  approximations. Derived using easily validated assumptions, these
  novel estimates can be computed at a relatively low cost and have
  applications to subsurface flow problems in geophysics where the
  conductivities are assumed to have lognormal distributions with low
  regularity. Our theory is supported by numerical experiments on test
  problems in one and two dimensions.
\end{abstract}

\begin{keywords}\emph{a posteriori} error, Galerkin error, quadrature
  error, elliptic PDE, random PDE, Monte Carlo methods, lognormal
\end{keywords}

\begin{AMS}
  60H35 (Primary), 65N15, 35R60 (Secondary)
\end{AMS}

\pagestyle{myheadings}%
\thispagestyle{plain}%
\markboth{E.~J.~HALL, H.~HOEL, M.~SANDBERG, A.~SZEPESSY AND
  R.~TEMPONE}{COMPUTABLE ESTIMATES FOR PDE WITH ROUGH STOCHASTIC
  DATA}%

%
\section{Introduction}
\label{sec:introduction}
%

There is a vast body of literature concerning \emph{a posteriori}
error estimation of finite element approximations of elliptic PDE. All
of these works, to some extent, examine models in which the
conductivities enjoy a certain degree of smoothness.  Motivated by
problems arising in geophysics, we present computable error estimates
for piecewise linear finite element approximations of a class of
elliptic problems where the conductivities are modeled by \emph{rough}
stochastic processes. In this setting, the typical \emph{a posteriori}
error analysis does not yield computable estimates.

To understand where the standard analysis breaks down, it is
instructive to consider the following homogeneous two-point boundary
value problem:
\begin{equation}
  \label{eq:model-problem}
  -(a(\omega,x) u^\prime(\omega,x))^\prime = 0 
\end{equation}
$\as$ for $x \in [0,1]$ subject to the boundary conditions
$u(0) = u(\omega,0) = 0$ and
$a(1)u^\prime(1) = a(\omega,1)u^\prime(\omega,1) = 1$, $\as$, where
$\omega \in \Omega$ for a given probability space
$(\Omega, \mathbb{F}, \prob)$ and where $u^\prime$ denotes the
derivative of $u$ with respect to $x$. We seek a solution,
$u = u(\omega,\cdot)$, to this equation, parameterized by $\omega$, in
the Sobolev space $\mathcal{H}^{1}(0,1)$ for a given process
$a(x) = a(\omega,x) = e^{B(\omega,x)}$ where
$(B(\omega,x))_{x \in [0,1]}$ is a Brownian bridge pegged to zero at
the end points. By Kolmogorov's continuity theorem
\cite{KaratzasShreve:1991}, $B(x)$ admits a version that is H\"{o}lder
continuous with exponent $1/2-\varepsilon$ for
$\varepsilon \in (0, 1/2)$ and thus $a$ admits a version
$a \in C^{\frac{1}{2}-\varepsilon}([0,1])$. In what follows we take
$a$ to be this almost $1/2$-H\"{o}lder continuous version and we note
that $a(0) = a(1) = 1$ for almost all $\omega \in \Omega$. As a matter
of course, one cannot expect much smoothness from the solution: we
have $ u \in C^{\frac{3}{2} - \varepsilon}([0,1])$, for
$\varepsilon \in (0,1/2)$, since $u^\prime(x) = 1/a(x)$.

Our goal is to determine moments of an observable $(u, g)$, a linear
functional of the solution to \eqref{eq:model-problem}, for a given
$g$. The problem is to estimate the expected error in the observable,
\begin{equation*}
  \label{eq:error-functional2}
  \mexp \left[ (u - \bar u_h, g) \right] = \mexp\left[ \int_0^1 (u-\bar u_h)(x) g(x) \dd x \right],
\end{equation*}
where $\bar u_h$ is the finite element approximation with a given
quadrature of the bilinear form, for a given $\omega$, taking values
in $V_h$, the space of piecewise linear elements.  The error can be
split into a part corresponding to the Galerkin error
\begin{equation}
  \label{eq:error-functional}
  \mexp \left[ (u -  u_h, g) \right] = \mexp\left[ \int_0^1 (u- u_h)(x) g(x) \dd x \right],
\end{equation}
and another part corresponding to the quadrature error
\begin{equation*}
  \label{eq:error-functionalquad}
  \mexp \left[ (u_h - \bar u_h, g) \right] = \mexp\left[ \int_0^1 (u_h-\bar u_h)(x) g(x) \dd x \right],
\end{equation*}
where $u_h$ solves the finite element problem with exact
quadrature. In the case of finite element approximations of problems
with smooth conductivity functions $a$, the quadrature error can be
made small compared to the Galerkin error with asymptotically negligible
additional work.  In the case of conductivities with low regularity,
the situation is different: we observe that, under mild assumptions,
the pathwise quadrature error is typically much larger than the
Galerkin error and the expected value of the quadrature error, proved
in Theorem \ref{thm:quadScales}, is of the same order as the Galerkin
error, in the sense that doubling the number of quadrature points
reduces the expected quadrature error with the same factor as the
Galerkin error when doubling the number of nodal points. Therefore the
quadrature error is at least as important as the Galerkin error when
analyzing the discretization error for problems where the conductivity
has low regularity. Indeed, if the regularity is lower than the rough
lognormal conductivity considered in \eqref{eq:model-problem}, then
the quadrature error may dominate. The pathwise Galerkin error,
derived in Theorem \ref{thm:GalerkinScales}, is in some sense simpler
to analyze. We observe that the pathwise Galerkin error is of the same
order as the expected value of the Galerkin error, suggesting that
there are no large stochastic cancellations present, in contrast to
the situation for the quadrature error. Nevertheless, the low
regularity of the conductivity makes our analysis of the Galerkin
error different from the typical \emph{a posteriori} analysis.

To motivate why the standard \emph{a posteriori} analysis of Galerkin
errors is not directly applicable in the case of conductivities with
low regularity, we sketch the classical \emph{a posteriori} error
analysis in the energy norm for the model problem
\eqref{eq:model-problem}. We begin by recalling a classical result in
\cite{Babuska:1971sn} that gives an \emph{a priori} bound for the
finite element approximation in the energy norm,
\begin{equation*}
  \| w \|_{E} := \left( \int_0^1 a(x) (w^\prime(x))^2 \dd x \right)^{\frac{1}{2}},
\end{equation*}
for elliptic PDE (analogous estimates for elliptic PDE with lognormal
coefficients, such as those given in \cite{Charrier:2012,
  CharrierScheichlTeckentrup:2013}, are discussed further in \S
\ref{sec:model}).  Namely, for $s < 1/2$,
\begin{equation}
  \label{eq:a-priori-est}
  \| u - u_h \|_E \leq \inf_{v \in V_h} \| u - v \|_E \lesssim h^s \|u\|_{1+s}
\end{equation}
estimates the Galerkin error committed pathwise assuming that the
quadrature used in computing $u_h$ is exact. Here $\| \cdot \|_{1+s}$
denotes the norm in the Sobolev space $\mathcal{H}^{1+s}(0,1)$ of
functions with up to $1+s$ derivatives in $L^2(0,1)$ (for a definition
of such spaces see \cite{ScottBrenner:2008}) and we observe that
$u \in C^{\frac{3}{2}-\varepsilon}([0,1])$ so that also
$u\in\mathcal{H}^{\frac{3}{2}-\varepsilon}(0,1)$.  Next, we rewrite
the pathwise error functional for \eqref{eq:error-functional} as
\begin{equation*}
  \int_0^1 a(x)(u^\prime - u_h^\prime)(x)
  (\lambda^\prime - \lambda_h^\prime)(x) \dd x
\end{equation*}
by introducing the dual problem in the variable $\lambda$:
\begin{equation*}
  -(a(\omega,x)\lambda^\prime(\omega,x))^\prime = g(x),
\end{equation*}
$\as$ for $x \in [0,1]$, subject to the boundary conditions
$\lambda(\omega,0) = \lambda^\prime(\omega,1) = 0$, $\as$ The
convergence of the pathwise Galerkin error is on the order
\begin{equation*}
  (u - u_h, g) \leq \| u - u_h \|_E \| \lambda - \lambda_h \|_E 
  = O(h^{2s}),
\end{equation*}
using \eqref{eq:a-priori-est} for sufficiently regular $g$. The error
in the energy norm is
\begin{equation}\label{u_h_residual_jump}
  \begin{split}
    \|u - u_h\|_E^2 &=  \int_0^1 a(u^\prime - u_h^\prime) (u - u_h)^\prime\dd x = \int_0^1 a(u^\prime - u_h^\prime) (u - u_h-v)^\prime\dd x\\
    &=\sum_{k=0}^{N-1} \int_{x_k}^{x_{k+1}}a(u'-u_h')(u-u_h-v)' \dd x\\
    &=\sum_{k=0}^{N-1} \Big(\int_{x_k}^{x_{k+1}} -(a(u'-u_h'))'(u-u_h-v) \dd x\\
    &\quad+ [a(u'-u_h')(u-u_h-v)]_{x_k}^{x_{k+1}} \Big)\\
    &= \sum_{k=0}^{N-1} \Big(\int_{x_k}^{x_{k+1}} (\underbrace{(-au')'}_{=0}+(au_h')')(u-u_h-v) \dd x\\
    &\quad+ [a(u'-u_h')(u-u_h-v)]_{x_k}^{x_{k+1}} \Big)\\
    &=\sum_{k=0}^{N-1} \int_{x_k}^{x_{k+1}}(au_h')'(u-u_h-v) \dd x\\
    &\quad +\sum_{k=1}^{N-1}
    a(x_k)\underbrace{(u_h'(x_k^+)-u_h'(x_k^-))(u-u_h-v)(x_k)}_{(u_h'(x_k^+)-u_h'(x_k^-))\langle\delta_{x-x_k},u-u_h-v\rangle}
    \\
    &\quad + a(1)(u'(1)-u_h'(1))(u-u_h-v)(1)\\
    &\quad -a(0)(u'(0)-u_h'(0))(u-u_h-v)(0)\\
    & =\int_0^1 (a u_h^\prime)^\prime (u - u_h-v)\dd x +\left(1 -
      u_h^\prime(1)\right) \left(u(1) - u_h(1)-v(1)\right),
  \end{split}
\end{equation}
for any $v \in V_h$ introduced by the Galerkin orthogonality. The
terms $u_h'(x_k^-)$ and $u_h'(x_k^+)$ denote left and right limits,
respectively, and $\langle\delta_{x-x_k},u-u_h-v\rangle$ denotes the
action of the Dirac delta distribution on the function $u-u_h-v$. This
operation is well defined, since $u-u_h-v$ is continuous.  The
residual, viewed in the sense of distributions, has two parts. One
part, $au_h''$ and $1-u_h'(1)$, consists of point masses at the nodal
points, due to the jump of the piecewise constant derivative $u_h'$,
and the residual of the weak boundary condition, respectively. This
part vanishes by choosing the test function $v=\pi(u-u_h)$ as the
nodal interpolant of $u-u_h$, so that $u-u_h-v$ is zero in all nodal
points. The remaining part of the residual $R:=a'u_h'$ does not
vanish, and by evaluating the integral as the duality pairing
$\int_0^1R(u-u_h-v)\dd x=\langle R,u-u_h-v\rangle$ between the Sobolev
spaces $\mathcal{H}^{-s}$ and $\mathcal{H}^s$ we obtain
\begin{equation*}
  \|u - u_h\|_E^2 \leq \| R \|_{-s} \| u - u_h - v\|_{s}.
\end{equation*}
Then the standard \emph{a posteriori} error estimate in the energy
norm, (\cite{BabuskaRheinboldt:1978ee, BabuskaRheinboldt:1981ap}), is
\begin{equation*}
  \| u - u_h \|_E \lesssim h^{1-s} \|R \|_{-s},
\end{equation*}
since $\| u - u_h - \pi(u - u_h)\|_{s} \lesssim h^{1-s} \|u - u_h\|_E$
by \eqref{eq:a-priori-est}. Since $R=a'u_h'=B'e^{B}u'_h$ has the
regularity of white noise in space, we have $\| R \|_{L^2} = \infty$
for $s = 0$. For other values of $s$, the negative Sobolev norm of the
residual is not easily computed.  Consequently, this direct
application of the standard \emph{a posteriori} error analysis does
not provide an estimate that can be easily computed even for the
preceding simple model problem.

This paper proposes computable, goal-oriented estimates for the
Galerkin and quadrature errors committed in standard, continuous,
piecewise linear finite element approximations of elliptic PDE with
rough stochastic conductivities. A key insight for these estimates
comes from studying the frequency content of the components of the
Galerkin error functional, arising from the dual weighted residual
analysis, for the simple model problem \eqref{eq:model-problem}. In
contrast to the case of smooth conductivities, we see that the
components' high-frequency content, which cannot be computed directly,
is non-negligible. Nevertheless, this high-frequency contribution can
be approximated by low-frequency content. This formal frequency study
suggests an assumption on scales that yields an estimator, based on
local error indicators, for observables of the pathwise Galerkin error
and hence the expected Galerkin error. Estimators for the expected
quadrature error are also obtained under an assumption on scales where
the stochastic cancellation effects present in the quadrature error
are analyzed using tools from stochastic sensitivity analysis.

\emph{A posteriori} estimates provide a quantitative measure of the
quality of a numerical experiment. There is a rich literature
concerning \emph{a posteriori} error estimation for deterministic
elliptic problems (see, for example,
\cite{ErikssonEtAl:1995,GilesSuli:2002,BangerthRannacher:2003,
  AinsworthOden:1997}). While our error estimates rely on the computed
solution to the finite element approximation and not on the analytic
solution, we call our estimates ``computable'' as opposed to \emph{a
  posteriori} as they require information about the regularity of the
analytic solution. Here, this requisite information appears in the
form of decay assumptions on the approximate solutions, which we call
assumptions on scales. For the models with rough lognormal
conductivity considered here, these decay rates are satisfied for all
the linear observables under consideration as the given covariances
are Lipschitz continuous. In principle our analysis can also treat
conductivities with less regularity, such as conductivities whose
covariances are not Lipschitz. In such cases, the decay assumptions
would need to be obtained heuristically and might also depend on the
particular choice of observable. We do not present a method for
obtaining such rates through strictly \emph{a posteriori}
information. In spite of these technicalities, the estimates we
propose are computable and can be useful when the standard \emph{a
  posteriori} analysis fails and qualitative assessments based solely
on \emph{a priori} information might be misleading.

The proposed estimators could be used in constructing adaptive
algorithms for variance reduction techniques for Monte Carlo (MC)
methods. For variance reduction techniques, such as the Multilevel
Monte Carlo (MLMC) method \cite{Giles:2008}, Multi-Index Monte Carlo
(MIMC) method \cite{Haji-AliNobileTempone:2014}, and Continuation MLMC
(CMLMC) method \cite{CollierEtAl:2014}, these novel, computable error
estimates can be used to determine final level stopping criterion. An
extension of the present theory would render it possible to also
estimate mean square errors between numerical solutions on consecutive
resolution levels. Since the mean square error implicitly bounds the
variance, this may give rise to highly efficient \emph{a posteriori}
adaptive variance reduction methods for MLMC; see
\cite{HoelHappolaTempone:2014aa} for an application of such ideas in
the setting of low regularity stochastic differential equations
(SDE). Applications of these estimators to variance reduction and
goal-oriented methods shall be the focus of the authors' future work.

The rest of this paper is organized as follows. In the next section,
we clarify certain details of the model and recall results concerning
the analysis of the finite element method. In \S
\ref{sec:frequency-analysis}, we present a formal frequency study of
the Galerkin error functional, which shows that the high frequency
content of the error, not observable on the computational grid, could,
at least in principle, be approximated by the computable low frequency
part under an assumption on a decay rate of Fourier modes. This
observation motivates a similar derivation of a computable error
estimate in \S \ref{sec:scales}, under a related assumption on
scales. In particular, in \S \ref{sec:comp-error-simple}, we give a
computable estimate of the pathwise observable of the Galerkin error
for the model problem \eqref{eq:model-problem} that relies on
computations on only one computational mesh; that is, the computations
are on one resolution level. We then present numerical experiments to
test the proposed estimator for a two-dimensional problem in \S
\ref{sec:2d-numer-exper}. Finally in \S \ref{sec:quadrature-error}, we
show that another assumption on scales yields an estimate of the
expected quadrature error committed in the finite element method.

%
\section{The Model}
\label{sec:model}
%

We consider the isotropic diffusion problem
\begin{equation}
  \label{eq:rpde}
  -\divergence (a(\omega,x) \nabla u(\omega,x)) = f(\omega,x)
\end{equation}
for $(\omega,x) \in \Omega \times \Gamma$ where $\Gamma$ is an bounded open subset of $\RR^d$, for $d \geq 1$, and $(\Omega, \mathbb{F}, \prob)$ is a given
probability space. To simplify our notation of the residual, we assume
the homogeneous Dirichlet boundary condition $u(x) = u(\omega,x) = 0$
for all $x \in \partial \Gamma$ $\as$ This equation arises in the
geophysics literature in the study of time-independent groundwater
flow on the local scale which is defined to be on the order of $10^2$
meters (\cite{Delhomme:1979, Dagan:1986, DaganNeuman:2005}). In this
setting, equation \eqref{eq:rpde} is Darcy's law with continuity where
$a$ represents the log hydraulic conductivity, $f$ is a given source
term, and the unknown $u$ represents the water pressure. A common
feature of groundwater flow on the local scale is the spatial
heterogeneity of the medium. In applications, prescribing $a$
precisely requires more information than is reasonably possible to
acquire. This uncertainty in the problem data is thus incorporated by
modeling $a$, and possibly $f$, as random fields. In applications to
subsurface flow, the law of $a$ is typically assumed to be lognormal
where the hydraulic conductivity, the Gaussian field $\log a$, has a
Lipschitz covariance. For example, in two dimensions, one could
consider the two-point isotropic covariance function given by $\cov
(x, y) = \sigma^2 e^{-|x - y|/\ell}$, where $\sigma^2$ is the
variance, $\ell$ the correlation length, and $|\cdot|$ is the
Euclidean norm (see \cite{HoeksemaKitanidis1985, Dagan:2005,
  DaganEtAl:2006}). Another feature of groundwater flow on the local
scale is that the correlation lengths involved are typically short;
that is, the length scale for $\ell$ is significantly smaller than the
scale of the problem domain but still too large for the application of
stochastic homogenization techniques
\cite{CliffeGilesScheichlTeckentrup:2011}.

We begin by introducing the objects of interest to our finite element
analysis. We study the variational form of equation \eqref{eq:rpde}
parameterized by $\omega \in \Omega$. That is, we seek $u \in
\mathcal{H}^{1}(\Gamma)$ such that
\begin{equation}
  \label{eq:variational-form}
  \int_\Gamma a(x) \nabla u(x) \cdot \nabla v(x) \dd x 
  = \int_\Gamma f(x) v(x) \dd x  \qquad \as
\end{equation}
for all $v \in V = \mathcal{H}_0^1(\Gamma) := \{ v \in
\mathcal{H}^{1}(\Gamma): v(x) = 0 \text{ for } x \in \partial \Gamma
\;\as \text{ in trace sense}\}$.  Note that this formulation seeks a
solution for almost all $\omega \in \Omega$. Let $V_h$ be the space of
continuous piecewise linear functions on a mesh in $\Gamma$ vanishing
along $\partial \Gamma$. The Galerkin formulation of
\eqref{eq:variational-form} is: find $u_h \in V_h$ such that
\begin{equation}
  \label{eq:galerkin-form}
  \int_\Gamma a(x) \nabla u_h(x) \cdot \nabla v_h(x) \dd x 
  = \int_\Gamma f(x) v_h(x) \dd x  \qquad \as
\end{equation}
for all $v_h \in V_h \subset V$. In the finite element approximation
of \eqref{eq:galerkin-form}, a further error is committed by replacing
the stiffness matrix components
\begin{equation*}
  \int_\Gamma a(x) \nabla\phi_i(x) \cdot \nabla\phi_j(x) \dd x
\end{equation*}
with quadrature based ones, which are of the form
\begin{equation*}
  \int_\Gamma \bar{a}(x) \nabla\phi_i(x) \cdot \nabla\phi_j(x) \dd x
\end{equation*}
so that the solution, including the contribution from the quadrature
error, is given by $\bar{u}_h \in V_h$ such that
\begin{equation}
  \label{eq:fem-form}
  \int_\Gamma \bar{a}(x) \nabla\bar{u}_h(x) \cdot \nabla v_h(x) \dd x 
  = \int_\Gamma \bar{f}(x) v_h(x) \dd x  \qquad \as
\end{equation}
for all $v_h \in V_h \subset V$. As posed,
\eqref{eq:variational-form}, \eqref{eq:galerkin-form} and
\eqref{eq:fem-form} are not uniformly elliptic with respect to
$\omega$. Nevertheless, conditions under which unique solutions exist
in the standard Bochner solution spaces,
$L^2(\Omega,\mathbb{F},\mathcal{H}_0^{1})$, can be found, for example,
in \cite{Charrier:2012,CharrierScheichlTeckentrup:2013}, where the
Fernique theorem \cite{Fernique:1975} is used to overcome the lack of
uniform ellipticity.

The analysis of finite element methods for elliptic PDE with
sufficiently regular and uniformly positive and bounded stochastic
coefficients is well established, see \cite{DebBabuskaOden:2001,
  BabuskaChatzipantelidis:2002, BabuskaTemponeZouraris:2004,
  MatthiesKeese:2005, FrauenfelderSchwabTodor:2005,
  BabuskaNobileTempone:2007, BarthSchwabZollinger:2011}.  These
analyses do not extend to model \eqref{eq:rpde} as the conductivity is
not guaranteed to be uniformly bounded and the regularity of the true
solution is low. Instead, the two predominant strategies for
approximating moments of observables are based on stochastic Galerkin
methods and MC finite element methods.

Stochastic Galerkin methods rely on a truncated Karhunen--Lo\`eve
expansion to obtain a reduced, or parametric, PDE that separates, in
some sense, the problem's stochastic and deterministic
components. Results in this direction have focused on questions of
well-posedness, convergence rates, and \emph{a priori} error estimates
\cite{Gittelson:2010, MuglerStarkloff:2011, Charrier:2012} and on
adaptive schemes using the energy norm \cite{Gittelson:2013,
  EigelEtAl:2013, EigelMerdon:2014}. Due to the short correlation
lengths involved in model \eqref{eq:rpde}, the reduced PDE typically
results in a very high-dimensional stochastic parameter space that can
be computationally limiting for calculating moments of observables.

An altogether different approach is to use MC methods.  Such
methods are favorable in high-dimensional situations since the
convergence rates are dimensionally independent. We mention
\cite{CliffeGilesScheichlTeckentrup:2011,
  CharrierScheichlTeckentrup:2013, TeckentrupEtAl:2013,
  GrahamScheichlUllmann:2013aa}, which analyze MLMC methods applied to
various finite element methods for elliptic PDE with lognormal
coefficients. In particular, \cite{GrahamScheichlUllmann:2013aa}
provides an analysis of mixed finite element methods that are useful
for considering quantities of interest that depend on
mass-conservative representation of the flux. All of these works focus
on \emph{a priori} estimates of the Galerkin finite element error.

In contrast, we give \emph{computable}, goal-oriented, duality-based
estimates for finite element methods in the MC framework. As discussed
in the introduction, the expected error in the observable for the
finite element approximation of \eqref{eq:rpde} is given by
\begin{equation}
  \label{eq:total-error}
  \mexp \left[ \left(g, u - \bar{u}_h\right) \right] =
  \mexp \left[ \left(g, u-u_h\right) \right] + \mexp \left[ \left(g,
      u_h - \bar{u}_h\right) \right] =: \mexp \mathcal{E}(g) + \mexp
  \mathcal{Q}(g),
\end{equation} where $\mexp \mathcal{E}(g)$ and $\mexp
\mathcal{Q}(g)$ are the expected Galerkin and quadrature errors,
respectively, for the given observable.  To be precise, we provide
computable estimates for $\mathcal{E}(g)$, and hence also for
$\mexp \mathcal{E}(g)$, and for $\mexp \mathcal{Q}(g)$ committed in a
standard piecewise linear finite element approximation of \eqref{eq:rpde}. 
For deterministic problems with smooth conductivities, Johnson and his
collaborators have long asserted the utility of such computable
duality-based estimators (for example, see \cite{ErikssonEtAl:1995}).

We shall first examine the problem posed in the introduction of
estimating $\mathcal{E}(g)$. Our estimates for error functionals of
this form rely on local error indicators computed using the finite
element approximation $u_h$ and the solution $\lambda$ to the
following dual problem: find $\lambda \in V$ such that
\begin{equation}\label{eq:weakdual}
  \int_\Gamma a(x) \nabla \lambda(x) \cdot \nabla v(x) \dd x 
  = \int_\Gamma g(x)v(x) \dd x  \qquad \as
\end{equation}
for all $v \in V$. A simple calculation, using integration by parts
and Galerkin orthogonality, shows that,
\begin{align*}
  \mathcal{E}(g) &= \left(u - u_h, - \nabla \cdot (a \nabla \lambda)\right)\\
  &= \left(a \nabla(u - u_h), \nabla \lambda\right)\\
  &= \left(a \nabla(u - u_h), \nabla (\lambda-v_h)\right)\\
  &= (f, \lambda-v_h) - (a \nabla u_h, \nabla (\lambda-v_h))\\
  &= \int_\Gamma \left(f + \nabla \cdot (a \nabla u_h)\right)
  (\lambda- v_h) \dd x
\end{align*}
for any $v_h \in V_h$, in the sense of distributions. This is a
similar calculation as in equation \eqref{u_h_residual_jump}. The error based
on the residual is therefore, in distribution form,
\begin{equation}
  \label{eq:error-density}
  \mathcal{E}(g) = \int_\Gamma R(\omega,x) (\lambda (\omega,x) - v_h(x)) \dd x  =\langle R,\lambda-v_h\rangle,
\end{equation}
for any $v_h \in V_h$, where the residual $R := f + \nabla \cdot (a
\nabla u_h)$ is a distribution containing also the jump terms at the
edges since the residual is a measure on the edges and not a
function. For problem \eqref{eq:model-problem}, this quantity reduces
to
\begin{equation}
  \label{eq:error-density-model-1D}
  \mathcal{E}(g) = \int_0^1 (a u_h^\prime)^\prime (\lambda - v_h) \dd x + \left(1-u_h(1)\right)\left(\lambda(1)-v_h(1)\right),
\end{equation}
as shown in \eqref{u_h_residual_jump}, and the remaining part of the
residual, that does not vanish, is $R = a'u_h'$. A formal analysis of
the frequency content of \eqref{eq:error-density-model-1D} offers
further insight into suitable local error indicators.

%
\section{Numerical Study of the Frequency Content}
\label{sec:frequency-analysis}
%

Next, we undertake a formal study of the frequency content of the
residual error \eqref{eq:error-density-model-1D} for the simple model
problem \eqref{eq:model-problem}. The error contains high-frequency
content that is non-negligible. Under assumptions on the decay of the
frequency content, we observe that the high-frequency contribution in
the error observable can be estimated in terms of the low-frequency
content. Although computations are provided only for a one-dimensional
example, this frequency study provides insight into why the standard
\emph{a posteriori} analysis fails. Motivated by the observations in
this section, we derive error estimators in the subsequent sections
through an analysis of scales and give a computable estimator for
\eqref{eq:model-problem} requiring only one level of the mesh size in
\S \ref{sec:comp-error-simple}.

From equation \eqref{eq:error-density-model-1D}, we express the
remaining part of the residual and the dual terms in Fourier series,
\begin{equation*}
  R(x) = \sum_{n = - \infty}^{\infty} r_n e^{2\pi inx} \qquad\text{and}\qquad
  (\lambda - v_h)(x) = \sum_{n=-\infty}^{\infty} \lambda_n e^{2\pi inx},
\end{equation*}
where $r_n$ and $\lambda_n$ denote the Fourier coefficients and where
we are free to choose $v_h \in V_h$ to ensure a continuous periodic
extension of $\lambda - v_h$. In particular, we choose $v_h = \pi_h
\lambda$, where $\pi_h :V \to V_h$ is the nodal interpolant. Naively
choosing $v_h = 0$ would result in a decay rate for $\lambda_n$ of at
most $O(n^{-1})$, which is non-optimal due to the jump discontinuity
introduced by the particular choice of boundary conditions for the
model problem. Further, we shall split $\lambda - v_h$ into low- and
high-frequency components,
\begin{equation}
  \label{eq:splitting}
  \lambda-v_h = \sum_{|n|< n^\star} \lambda_{n} e^{2\pi inx} 
  + \sum_{|n| \geq n^\star} \lambda_n e^{2\pi inx}
  =: \underline{\lambda} + \overline{\lambda},
\end{equation}
for $n^\star = C/h$, where $C = C(\omega)$ is a constant independent
of $h$. The mesh size, $h$, limits the frequencies obtained in the
computed finite element solution, $u_h$. In this sense,
$\underline{\lambda}$ and $\overline{\lambda}$, respectively,
represent the low- and high-frequency components relative to the
residual that is computed using the finite element solution,
$u_h$. $\overline{\lambda}$ cannot be approximated based on $V_h$ or
$V_{\frac{h}{2}}$ but $\underline{\lambda}$ can be approximated based on $V_h$
and $V_{\frac{h}{2}}$.

In Figure~\ref{fig:comp-low-freq-rough-smooth}, an estimate of
\begin{equation*}
  |\mathcal{E}(1)| := | \sum_{n=-\infty}^{\infty} r_n \lambda_n |,
\end{equation*}
the Galerkin error, and
\begin{equation*}
  |\mathcal{E}_L(1)| := | \sum_{|n| < n^\star} r_n \lambda_n |,
\end{equation*}
the low-frequency component of the Galerkin error, for the observable
$g=1$, are plotted for a smooth conductivity, $a(x) = x+1$, and for
one realization of a rough lognormal conductivity, $a(x) = e^{B(x)}$,
where the field $a$ has covariance function,
\begin{equation*}
  \cov(a(x),a(y)) =  e^{\frac{1}{2}(x(1-x) + y(1-y))}(e^{\min(x,y) - xy} - 1).
\end{equation*}
Here the Fourier coefficients are given by the discrete Fourier
transform, $v_h = \pi_h \lambda$ is chosen, $\lambda$ is computed on a
reference mesh of size $h = 2^{-16}$, and all the finite element
calculations use a quadrature mesh of size $k = 2^{-23}$ to avoid
polluting the Galerkin error with quadrature error. We observe that in
the case of the smooth $a$, the contribution to the error from the
high-frequency component is expected to be negligible, as is indicated
in the left-hand panel. In the case of the rough lognormal $a$, see
the right-hand panel, the low-frequency component does not
sufficiently capture the Galerkin error indicating a non-negligible
contribution from $\overline{\lambda}$. Although the high frequency
component, which requires a space much finer than $V_h$ to be
resolved, contributes to the error in this setting, we are able to
estimate the contribution from $\overline{\lambda}$ in terms of
$\underline{\lambda}$ if an assumption on the decay of the Fourier
coefficients is made.

\begin{figure}[]
  \centering \subfloat{%
    \includegraphics[width=0.49\textwidth]{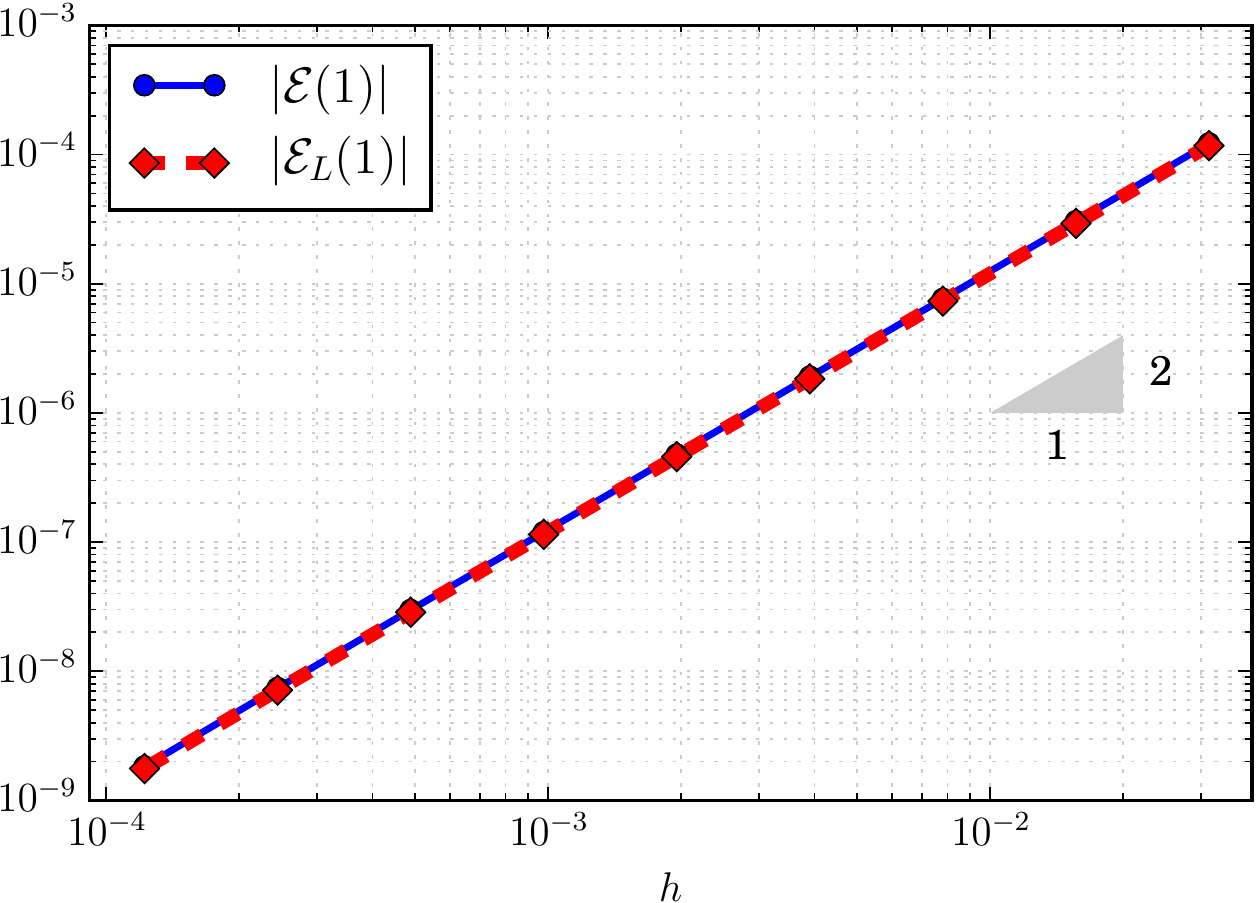}}
  \hfill \subfloat{%
    \includegraphics[width=0.49\textwidth]{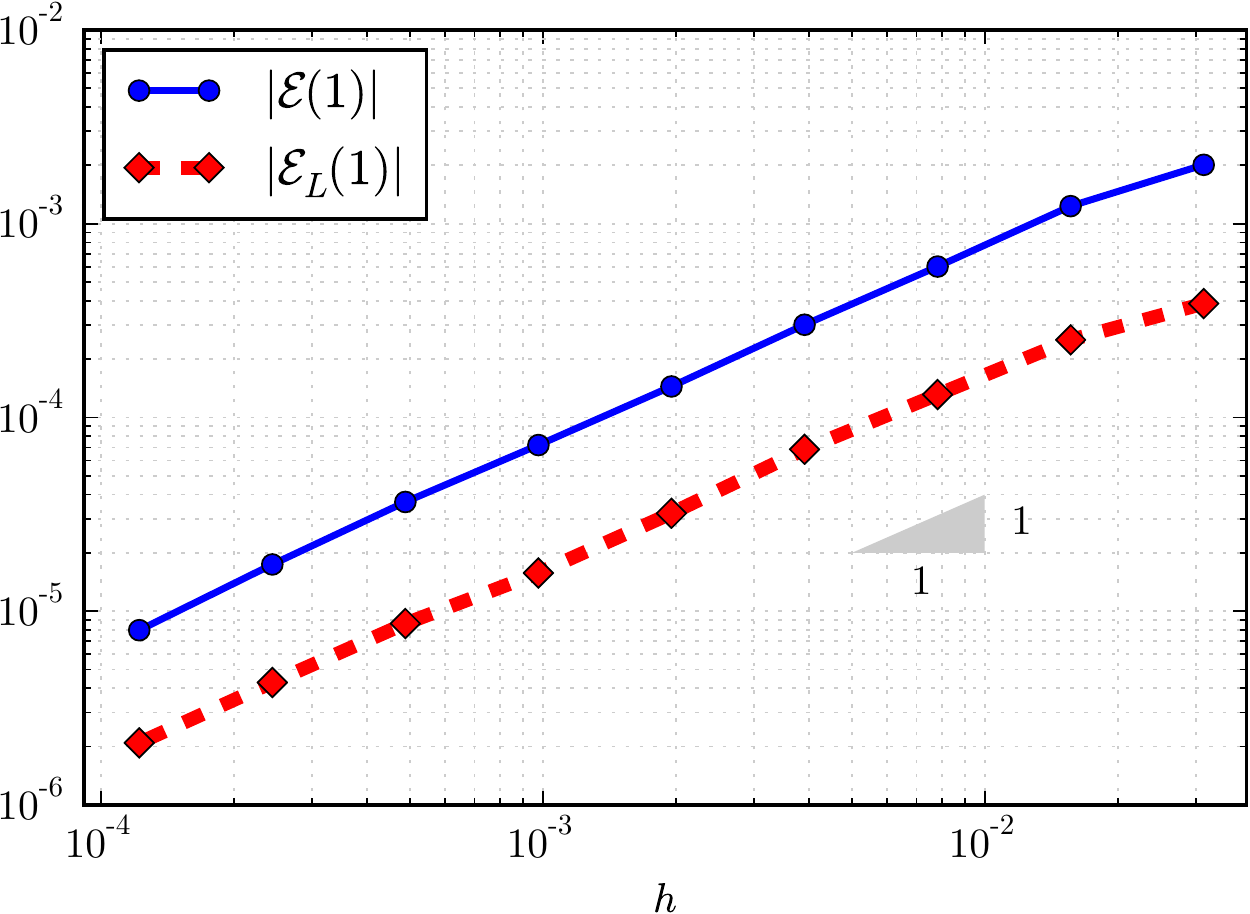}}%
  \caption{The high-frequency content contributes to the Galerkin
    error for one realization of the rough lognormal conductivity,
    $a(x) = e^{B(x)}$, (right-hand panel) in contrast to a smooth
    conductivity, $a(x) = 1+x$, (left-hand panel) where the
    contribution is negligible.}
  \label{fig:comp-low-freq-rough-smooth}
\end{figure}

\begin{remark}\rm
  Figure \ref{fig:comp-low-freq-rough-smooth} also demonstrates that
  the Galerkin error in the piecewise linear finite element
  approximation of \eqref{eq:model-problem} is on the order of $O(h)$
  in the case of the rough lognormal conductivity as compared to
  $O(h^2)$ for the smooth conductivity.
\end{remark}

If we assume that
\begin{equation}
  \label{eq:decay-assumption}
  | r_n \lambda_n | + | r_{-n}\lambda_{-n}| \leq C n^{-2\alpha},
\end{equation}
for positive $n$ and a positive $C = C(\omega)$, then
\begin{equation*}
  \mathcal{E}(g) \leq \tilde{C} h^{2\alpha-1}
\end{equation*}
for $\alpha \in (1/2, 3/2)$ for a random variable $\tilde{C} =
\tilde{C}(\omega)$ not depending on $h$. This estimate is obtained by
first applying the splitting \eqref{eq:splitting} to
\eqref{eq:error-density-model-1D} and then by using the decay
assumption \eqref{eq:decay-assumption}.

To wit, the frequency splitting yields
\begin{equation*}
  \mathcal{E}(g) = \int_{0}^{1} R(x) 
  (\underline{\lambda}-\pi_h \underline{\lambda})(x) \dd x 
  + \int_{0}^{1} R(x) \overline{\lambda}(x) \dd x.
\end{equation*}
Applying $\underline{\lambda} - \pi_h \underline{\lambda} \simeq C_0
h^2 \underline{\lambda}^{\prime\prime}$, a standard localization
estimate with deterministic interpolation constant $C_0$, the error
can then be approximated by
\begin{equation*}
  |\mathcal{E}(g)| \leq \sum_{|n| < n^\star} 
  C_0 h^2 n^2 |r_n| |\lambda_n| 
  + \sum_{|n| \geq n^\star} |r_n \lambda_n|.
\end{equation*} 
Then, we have
\begin{equation*}
  |\mathcal{E}(g)| \leq \sum_{|n| < C/h} 
  C_0 h^2 \frac{n^2}{n^{2\alpha}} 
  + \sum_{|n| \geq C/h} \frac{C}{n^{2\alpha}},
\end{equation*}
by the decay assumption \eqref{eq:decay-assumption}. The
high-frequency contribution, arising from the latter sum, can be
estimated by
\begin{equation*}
  2 \int_{C/h}^\infty s^{-2\alpha} \dd s =
  2 \left[\frac{-s^{-2\alpha +1}}{1-2\alpha}\right]_{C/h}^\infty
  = \frac{2}{2\alpha-1} \left(\frac{h}{C}\right)^{2\alpha-1},
\end{equation*}
provided $\alpha > 1/2$, and the low-frequency contribution, arising
from the first sum, by
\begin{equation*}
  2 C_0 h^2 \int_0^{C/h} s^{2-2\alpha} \dd s
  = \frac{2C_0}{3-2\alpha} C^{3-2\alpha}h^{2\alpha-1},
\end{equation*}
provided $\alpha < 3/2$.

Thus, we have the estimate
\begin{equation*}
  |\mathcal{E}(g)| \leq 2 \left(\frac{C_0 C^{3-2\alpha}}{3-2\alpha} +
    \frac{C^{1-2\alpha}}{2\alpha-1}\right) h^{2\alpha-1},
\end{equation*} 
for $\alpha \in (1/2, 3/2)$.  In contrast, for a smooth conductivity
$a$, we have
\begin{equation*}
  |\mathcal{E}(g)| \leq 2 \left( \frac{C_0}{2\alpha - 3} h^2 + \frac{1}{2\alpha-1} 
    h^{2\alpha-1} \right)
\end{equation*}
for $\alpha > 3/2$, where the contribution from the high frequencies
is clearly negligible. While the high frequency error contribution for
the rough lognormal conductivity cannot be observed on the
computational grid, the foregoing argument shows that it is of the
same order, $h^{2\alpha-1}$, as the low frequency error that is
possible to detect in a numerical simulation. Therefore, a naive
approach that only takes into account the smooth part of the error
would always be a factor wrong.

Returning to consider the validity of our initial assumption, we see
that \eqref{eq:decay-assumption} is satisfied for a constant $\alpha$
in the case of the simple model problem \eqref{eq:model-problem}.
Recall that the residual has the form $R(x) = a^\prime u_h^\prime =
B^\prime e^{B}u_h^\prime \approx W^\prime e^{B} u_h^\prime$, where
$W^\prime$ is a white noise in space. The white noise has a power
spectrum density that is flat, that is, $|\mathcal{F}_n(W^\prime)|^2 =
O_P(1)$, that is, $O(1)$ $\as$, where $\mathcal{F}_n(f)$ denotes the
$n$th Fourier mode of $f(x)$. Hence $|\mathcal{F}_n(R)| = O_P(1)$. For
the dual, we have the explicit representation $\lambda^\prime = G/a$,
where $G$ is the primitive of $g$. If we suppose $G=1$ and approximate
$1/a \approx 1 - B \approx 1-W$, then
\begin{equation*}
    |\mathcal{F}_n(\lambda)| = O_P(n^{-1}) |\mathcal{F}_n(\lambda^\prime)| 
    = O_P(n^{-1}) |\mathcal{F}_n(W)| = O_P(n^{-2}).
\end{equation*}
Assuming more generally that $g \in C^0([0,1])$, we have by Parseval's
identity that $\| g\|_2 = \sum_k |\mathcal{F}_k(g)|^2 < \infty$, so we
may assume $\mathcal{F}_n(g) \to 0$ as $n \to \infty$. Then we obtain
the same rate,
\begin{align*}
  |\mathcal{F}_n(\lambda)|
  &= O_P(n^{-1}) |\mathcal{F}_n(\lambda^\prime)|\\
  &= O_P(n^{-1}) |\mathcal{F}_n(G(1-W))| \\
  &= O_P(n^{-2}) \left(|\mathcal{F}_n(g)| + |\mathcal{F}_n(gW +
    GW')|\right) = O_P(n^{-2}).
\end{align*}
Here we have used the convolution rule for Fourier series, Parseval's
identity, and H\"older's inequality to conclude that
\begin{equation*}
  \mathcal{F}_n(gW) 
  = (\mathcal{F}(g)*\mathcal{F}(W))_n 
  = \sum_k \mathcal{F}_k(g) \mathcal{F}_{n-k}(W) \leq  \| g \|_2 \|W\|_2 
  = O_P(1),
\end{equation*}
since $W$ is in $C^0([0,1])$ and bounded, and thus $W \in
  L^2(0,1)$ $\as$ Computing the integral of $\mathcal{F}_n(GW')$ is
  slightly more cumbersome, but once again one can use that $\|W\|_2 =
  O_P(1)$, neglecting constants, write the integral with $W'$ expanded
  in the Karhunen-Lo\`{e}ve basis to see that
\begin{align*}
  \mathcal{F}_n(GW')
  &= \sum_k \mathcal{F}_{n+k}(G) \mathcal{F}_k(W^\prime)\\
  &= \sum_k \frac{\mathcal{F}_{n+k}(g)}{|n+k| +1}  \mathcal{F}_k(W^\prime)\\
  &= \sum_k \mathcal{F}_{n+k}(g) \mathcal{F}_k\left(\frac{W^\prime}{1+|k|}\right)\\
  &= \sum_k \mathcal{F}_{n+k}(g) \mathcal{F}_k(W )\\
  &\leq \| g\|_2 \|W\|_2 = O_P(1).
\end{align*}
Thus, the decay rate is satisfied with $\alpha = 1$ in the case of the
simple model problem when $g\in C^{0}([0,1])$.

This rate is also observed experimentally for $g=1$ as well as $g =
\delta_{\frac{1}{2}}$. Using the explicit representations available in
the one-dimensional model problem, the product of the Fourier
coefficients corresponding to $\mathcal{E}(1)$, and also for
$\mathcal{E}(\delta_{\frac{1}{2}})$, are plotted in
Figure~\ref{fig:decay-test}, where $\delta_{\frac{1}{2}}(x) :=
\delta(x - 1/2)$ is the $\delta$-function centered at $x = 1/2$. Here
the computations are for one sample path of the log-Brownian bridge
process, $a = e^B$, for elements of size $h = 2^{-10}$, with a
quadrature mesh of size $k = 2^{-25}$, and the decay rate is fitted
using the method of least squares. The Fourier content decays at a
constant rate, corresponding to $\alpha \approx 1$, for both the
observables $g = 1$ and $g = \delta_{\frac{1}{2}}$.  In particular,
for constant $\alpha = 1$, the contributions from the high- and
low-frequency components are $2C^{-1}$ and $2C_0 C$, respectively, and
therefore
\begin{equation*}
  \frac{\text{low}}{\text{total}}=  \frac{\text{low}}{\text{low}+\text{high}} 
  = \frac{1}{1+\frac{1}{C_0C^2}}
\end{equation*}
is the ratio of the contribution of the low-frequency part to the
total frequency content of the error.

\begin{figure}[]
  \centering \subfloat{%
    \includegraphics[width=0.49\textwidth]{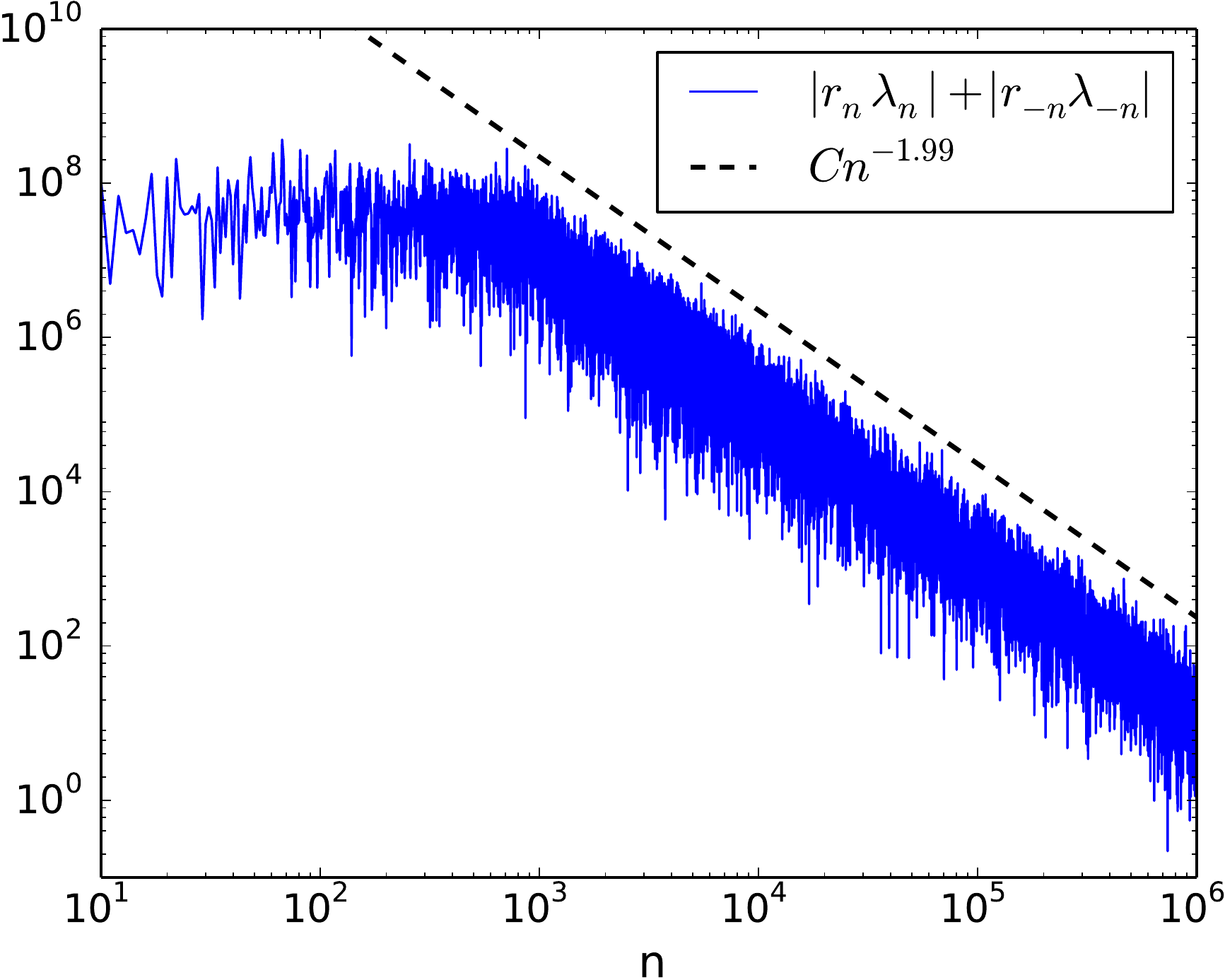}
  }\hfill \subfloat{%
    \includegraphics[width=0.49\textwidth]{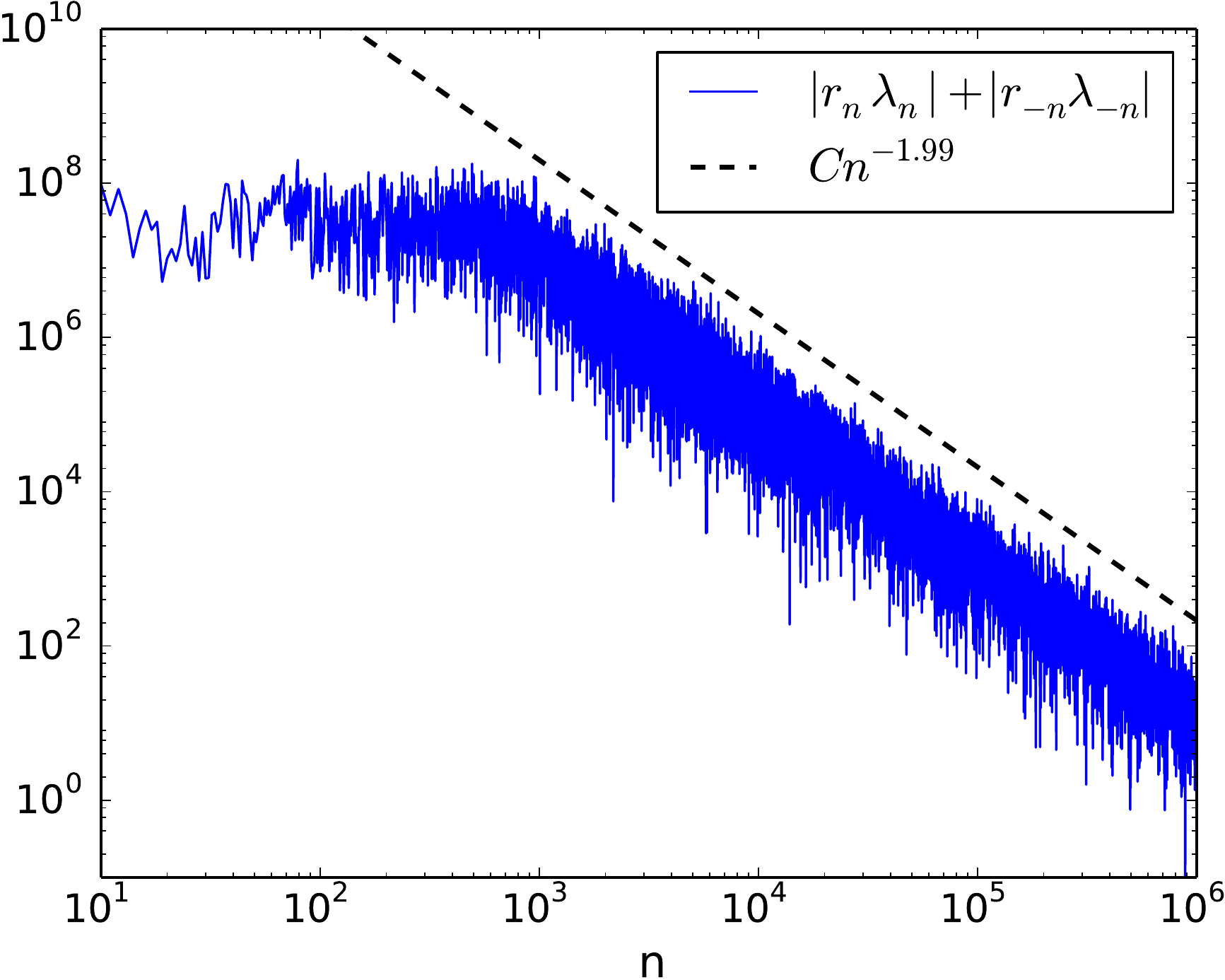}
  }%
  \caption{Pathwise, the Fourier content decays at a constant rate,
    observed to be $O(n^{-1.99})$, for the observables $g = 1$
    (left-hand panel) and $g = \delta_{\frac{1}{2}}$ (right-hand
    panel).}
  \label{fig:decay-test}
\end{figure}

With the insight obtained from the formal analysis in the present
section, we anticipate that only a fraction of the total error can be
found on the computational grid used. If this fraction would be known,
then the total error could be estimated as an error estimator from the
computational grid divided by that fraction. In the next section we
show rigorously that frequency splitting can be achieved under an
assumption on scales related to \eqref{eq:decay-assumption}. The
analysis given there also gives the factor that should multiply the
``observable'' error contribution from a computational grid.

%
\section{Analysis of scales}
\label{sec:scales}
%

\subsection{Frequency splitting via an assumption on scales}
\label{sec:freq-split-via}

Presently we derive the error estimators via a general telescoping
argument in which the frequency splitting required to drive our
analysis is achieved by an assumption on scales. This analysis yields
a similar estimate as the frequency study in the previous section for
the one-dimensional model problem and can be readily applied to
higher-dimensional problems.  We will need the solutions $\lambda_h$
to the discretized version of the dual equation \eqref{eq:weakdual},
given by
\begin{equation}\label{eq:discretedual}
  \int_\Gamma a(x) \nabla \lambda_h(x) \cdot \nabla v_h(x) \dd x 
  = \int_\Gamma g(x)v_h(x) \dd x  \qquad \as
\end{equation}
for all $v_h \in V_h \subset V$.
\begin{theorem}\label{thm:GalerkinScales}
  Assume that, for positive constants $\alpha$ and $\beta$, and for a
  positive random variable $C=C(\omega)$, the discrete solutions $u_h$
  and $\lambda_h$ to the equations \eqref{eq:galerkin-form} and
  \eqref{eq:discretedual} satisfy
  \begin{equation}
    \label{eq:freq-split-by-scales}
    \int_\Gamma a(x) \left|\nabla (u_{h/2}-u_h)(x) 
      \cdot \nabla (\lambda_{H/2}-\lambda_H )(x)\right| \dd x 
    \leq C h^\alpha H^\beta
  \end{equation}
  for sufficiently small mesh sizes $h$ and $H$. Then the Galerkin
  error $\mathcal{E}(g)$, defined in \eqref{eq:total-error}, satisfies
  \begin{equation*}
    \label{eq:upper-bound-galerkin-error}
    |\mathcal{E}(g)| \leq \frac{C h^{\alpha+\beta}}{(1-2^{-\alpha})(1-2^{-\beta})}.
  \end{equation*} 
\end{theorem}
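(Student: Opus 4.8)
The plan is to combine the dual-weighted residual representation of $\mathcal{E}(g)$ with a double telescoping over dyadically refined meshes, to bound each term of the resulting double sum by the assumed scale estimate \eqref{eq:freq-split-by-scales}, and then to sum two independent geometric series.

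First I would record the exact error identity. The computation preceding \eqref{eq:error-density} shows, via Galerkin orthogonality $(a\nabla(u-u_h),\nabla v_h)=0$ for every $v_h\in V_h$, that $\mathcal{E}(g)=(a\nabla(u-u_h),\nabla(\lambda-v_h))$ for any $v_h\in V_h$. Choosing $v_h=\lambda_h$ gives the symmetric residual form
\begin{equation*}
  \mathcal{E}(g)=\int_\Gamma a(x)\,\nabla(u-u_h)(x)\cdot\nabla(\lambda-\lambda_h)(x)\,\dd x,
\end{equation*}
which is the quantity to which the assumption will be applied.

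Next I would telescope both factors over a dyadic sequence of mesh sizes. Setting $u_h=:u_{h/2^{0}}$ and $\lambda_h=:\lambda_{h/2^{0}}$, and invoking convergence of the finite element solutions in the energy norm, I would write
\begin{equation*}
  u-u_h=\sum_{j\ge 0}\pr{u_{h/2^{j+1}}-u_{h/2^{j}}},\qquad
  \lambda-\lambda_h=\sum_{k\ge 0}\pr{\lambda_{h/2^{k+1}}-\lambda_{h/2^{k}}},
\end{equation*}
each series converging in $\mathcal{H}^{1}(\Gamma)$. By bilinearity and continuity of the form $(a\nabla\cdot,\nabla\cdot)$, the error becomes the double sum of the integrals $\int_\Gamma a\,\nabla(u_{h/2^{j+1}}-u_{h/2^{j}})\cdot\nabla(\lambda_{h/2^{k+1}}-\lambda_{h/2^{k}})\,\dd x$. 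Applying \eqref{eq:freq-split-by-scales} with the pair of mesh sizes $(h/2^{j},h/2^{k})$ bounds the modulus of the $(j,k)$ term by $C\,(h/2^{j})^{\alpha}(h/2^{k})^{\beta}=C\,h^{\alpha+\beta}2^{-j\alpha}2^{-k\beta}$. Summing the two geometric series $\sum_{j\ge0}2^{-j\alpha}=(1-2^{-\alpha})^{-1}$ and $\sum_{k\ge0}2^{-k\beta}=(1-2^{-\beta})^{-1}$ then yields exactly the asserted bound.

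The main obstacle I anticipate is justifying the telescoping rigorously rather than formally. I must verify that $u_{h/2^{j}}\to u$ and $\lambda_{h/2^{k}}\to\lambda$ in the energy norm, so that the two telescoping representations hold, and that the resulting double series converges absolutely, so that the bilinear form may be distributed across it and the sum factored into a product of two single sums. Both points follow from \eqref{eq:freq-split-by-scales}: since $\alpha,\beta>0$, the term-by-term bound $C\,h^{\alpha+\beta}2^{-j\alpha}2^{-k\beta}$ is summable over $\NN\times\NN$, which gives absolute convergence and licenses the rearrangement. A minor point is that the hypothesis is stated only for sufficiently small mesh sizes; since telescoping only refines, every pair $(h/2^{j},h/2^{k})$ with $j,k\ge0$ is at least as fine as the initial pair, so the estimate applies uniformly to every term.
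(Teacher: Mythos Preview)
Your proposal is correct and follows essentially the same route as the paper: rewrite $\mathcal{E}(g)$ via the dual representation with $v_h=\lambda_h$, telescope both $u-u_h$ and $\lambda-\lambda_h$ over dyadic refinements, apply the hypothesis termwise, and sum the two geometric series. Your additional remarks on absolute summability and the ``sufficiently small'' clause are reasonable bookkeeping that the paper leaves implicit; the one small slip is that convergence $u_{h/2^j}\to u$ in the energy norm is not a consequence of \eqref{eq:freq-split-by-scales} but rather a standard a priori fact (cf.\ \eqref{eq:a-priori-est}), though this does not affect the argument.
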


{\em Proof}. Letting $v_h = \lambda_h$, we rewrite the Galerkin error
\eqref{eq:error-density} as
\begin{equation*}
  \mathcal{E}(g) = \int_\Gamma a(x)  \nabla (u - u_h)(x)
  \cdot \nabla (\lambda - \lambda_h)(x) \dd x.
\end{equation*}
Expanding the quantities containing differences in the primal and dual
variables in telescoping sums in $m$ and $n$, we obtain
\begin{align*}
  |\mathcal{E}(g)| & = \left| \sum_{m,n=0}^\infty \int_\Gamma a(x)
                     \nabla (u_{h/2^{m+1}}-u_{h/2^{m}})(x) \cdot \nabla
                     (\lambda_{h/2^{n+1}}-\lambda_{h/2^{n}})(x) \dd x \right| \\
                   &\leq \sum_{m,n = 0}^\infty \int_\Gamma a(x) \left| \nabla
                     (u_{h/2^{m+1}}-u_{h/2^{m}})(x) \cdot \nabla
                     (\lambda_{h/2^{n+1}}-\lambda_{h/2^{n}})(x)\right|
                     \dd x\\
                   & \leq \sum_{m,n =0}^\infty C h^\alpha h^\beta
                     2^{-m\alpha}2^{-n\beta} =
                     \frac{Ch^{\alpha+\beta}}{(1-2^{-\alpha})(1-2^{-\beta})}. \qquad\endproof
\end{align*}

One can estimate $\mathcal{E}(g)$ by computing
\begin{equation*}
  \tilde{F}(h) := \int_\Gamma a \nabla (u_{h/2}-u_h)
  \cdot \nabla(\lambda_{h/2}-\lambda_h) \dd x \approx C^\prime(\omega) h^{\alpha+\beta},
\end{equation*}
a quantity of the same order as $\mathcal{E}(g)$, for a random
variable $C^\prime$ independent of $h$, which relies on local error
indicators on two levels, that is, on two nested computational
meshes. In practice, we shall estimate $\mathcal{E}(g)$ by
\begin{equation}\label{eq:Fdef}
  F(h) := \int_\Gamma a(x) \left|\nabla (u_{h/2}-u_h)(x) 
    \cdot \nabla(\lambda_{h/2}-\lambda_h)(x)\right| \dd x,
\end{equation}
for the reasons outlined below.

For some observables, the Galerkin error may be small due to
non-stochastic cancellations in the error indicators. In
Figures~\ref{fig:indicators} and \ref{fig:indicators-pathalogical}, we
plot, element-wise, the local error indicators for several observables
for the one-dimensional model problem to illustrate the possible
effects of cancellations. That is, we split the domain of the integral
in $\tilde{F}(h)$ over each $h$-element and plot the resulting
element-wise contributions to the Galerkin error corresponding to
$g = 1$ and $g = \delta_{\frac{1}{2}}$ in Figure~\ref{fig:indicators}
and $g = \cos(2\pi \cdot)$ in
Figure~\ref{fig:indicators-pathalogical}. These error indicators are
computed for $h = 2^{-9}$ using a quadrature mesh, $k = 2^{-22}$.  In
contrast to the indicators corresponding to $g = \cos(2\pi \cdot)$,
displayed in Figure~\ref{fig:indicators-pathalogical}, we observe that
the error indicators corresponding to $g = 1$ and
$g = \delta_{\frac{1}{2}}$ will not exhibit extensive cancellations
when the error estimator is computed.

\begin{figure}[]
  \centering \subfloat{%
    \includegraphics[width=0.49\textwidth]{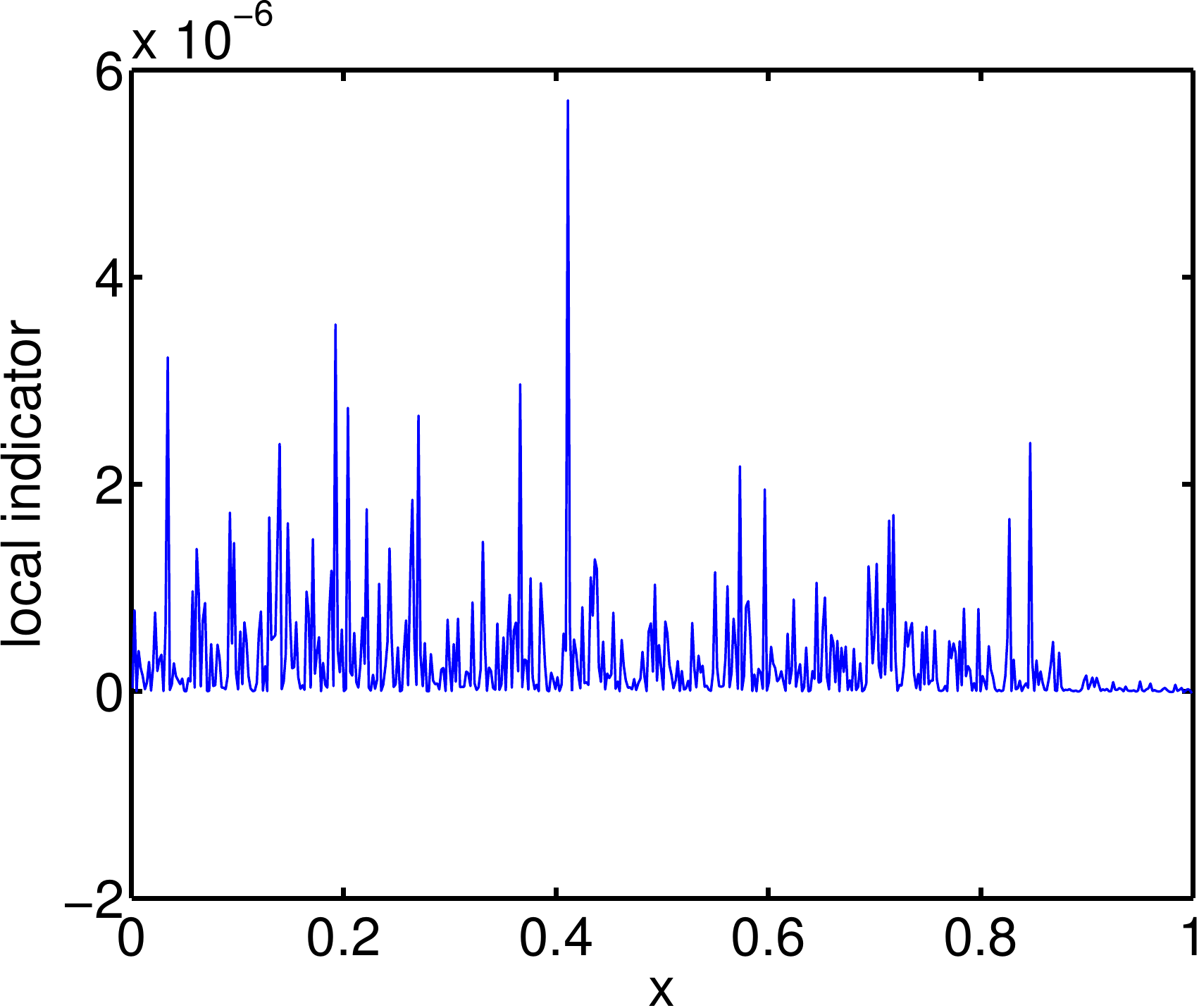}
  }\hfill \subfloat{%
    \includegraphics[width=0.49\textwidth]{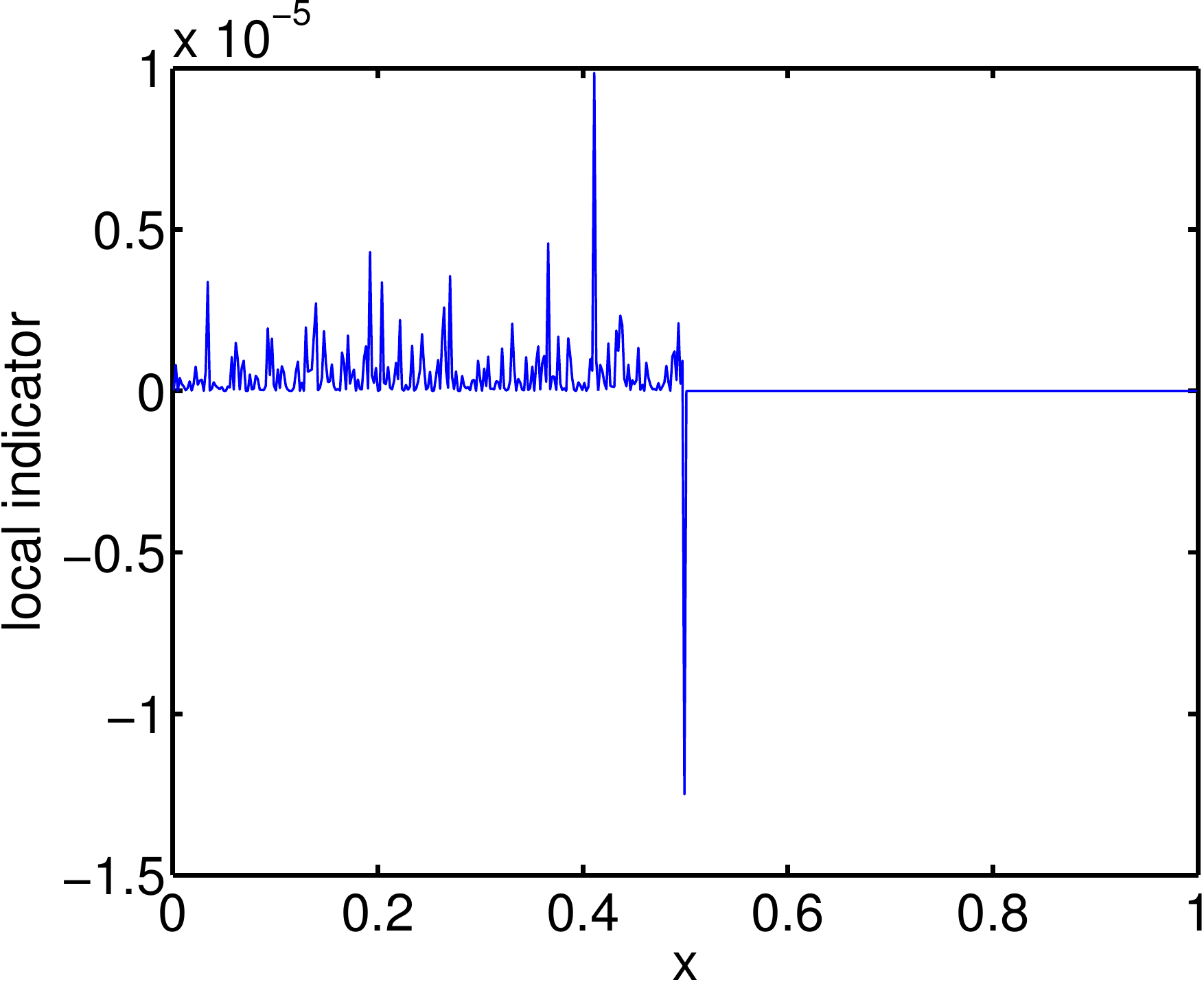}
  }
  \caption{The local error indicators for one realization of the error
    committed in approximating two generic observables, $g=1$
    (left-hand panel) and $g = \delta_{\frac{1}{2}}$ (right-hand
    panel), do not sum to zero.}
  \label{fig:indicators}
\end{figure}
\begin{figure}[]
  \centering \subfloat{%
    \includegraphics[width=0.49\textwidth]{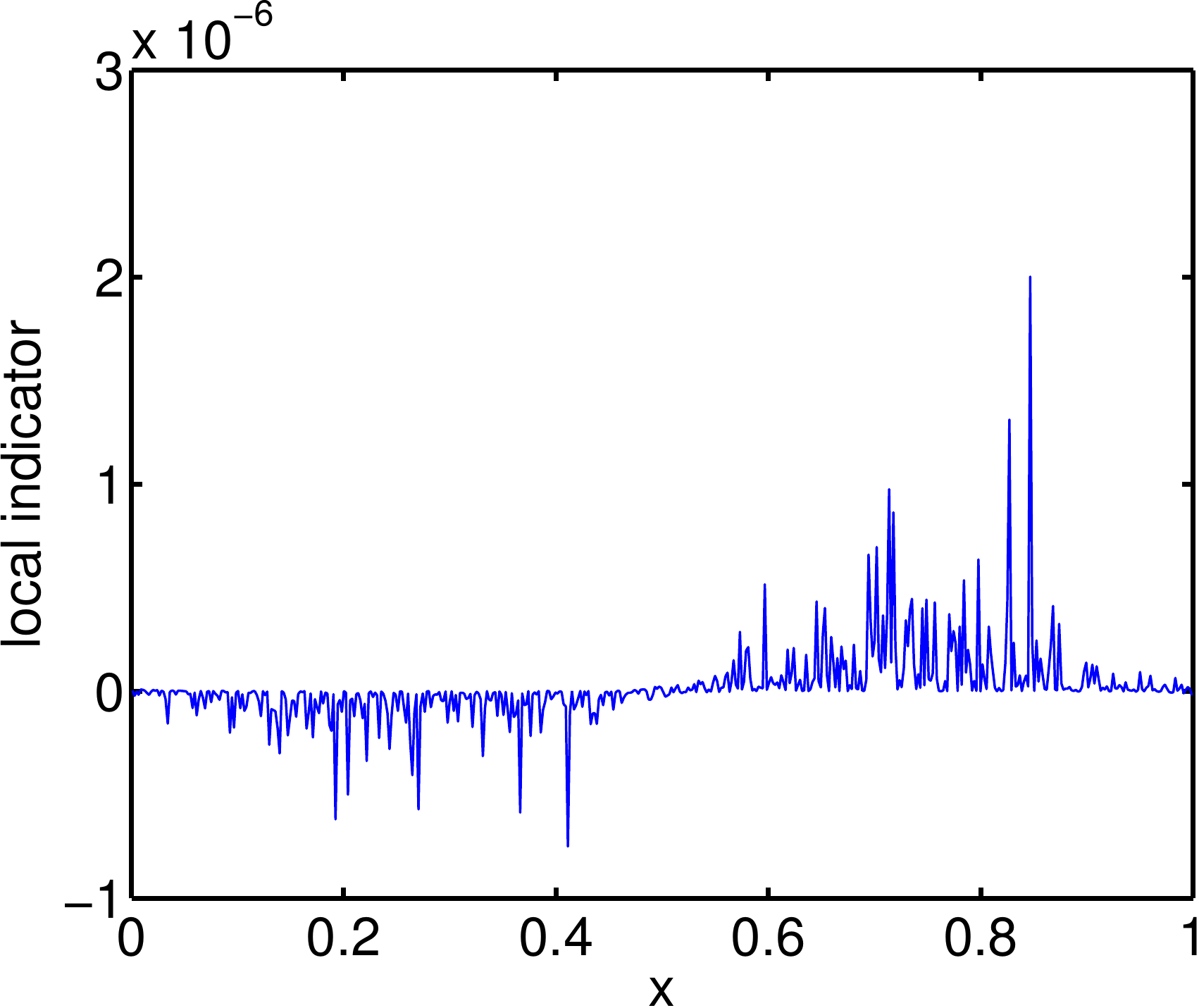}
  }%
  \caption{The local error indicators for one realization of the error
    committed in approximating a pathological observable,
    $g = \cos(2\pi \cdot)$, specially constructed to exhibit extensive
    cancellations in the estimator.}
  \label{fig:indicators-pathalogical}
\end{figure}

Trivially, the estimator may vanish due to cancellations of the
indicators, for a particular $h$ or for a particular choice of
observable. However, it is not easy to say when such cancellations
occur, or how to take advantage of them in order to make better
predictions of the size of the error.  Furthermore, if we require that
the signed estimator $F(h)$ is bounded away from zero, we obtain the
following corollary to Theorem \ref{thm:GalerkinScales}:

\begin{corollary}\label{cor:scales}
  If the estimator $F(h)$, defined in \eqref{eq:Fdef}, satisfies
  \begin{equation}
    \label{eq:estimator-bounded}
    c \leq \frac{F(h)}{h^{\alpha+\beta}} \leq C
  \end{equation}
  for positive constants $c = c(\omega)$ and $C = C(\omega)$, both
  independent of $h$, the Galerkin error $\mathcal{E}(g)$, defined in
  \eqref{eq:total-error}, satisfies
  \begin{equation*}
    |\mathcal{E}(g)|
    \leq  \frac{C F(h)}{c (1-2^{-\alpha})(1-2^{-\beta})}.
  \end{equation*}
\end{corollary}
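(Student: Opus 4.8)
The plan is to combine the conclusion of Theorem~\ref{thm:GalerkinScales} with the two-sided hypothesis \eqref{eq:estimator-bounded} in a purely algebraic manner, so that no new analytic estimate is required. First I would observe that the lower bound in \eqref{eq:estimator-bounded}, namely $c \leq F(h)/h^{\alpha+\beta}$, is equivalent to $h^{\alpha+\beta} \leq F(h)/c$, which expresses the (otherwise unobservable) quantity $h^{\alpha+\beta}$ in terms of the computable estimator $F(h)$. This is the crucial conversion: Theorem~\ref{thm:GalerkinScales} bounds $|\mathcal{E}(g)|$ by a multiple of $h^{\alpha+\beta}$, but $h^{\alpha+\beta}$ with its unknown constant $C(\omega)$ is not directly accessible, whereas $F(h)$ is.

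Next I would verify that the upper bound in \eqref{eq:estimator-bounded}, $F(h)/h^{\alpha+\beta} \leq C$, supplies precisely the constant $C = C(\omega)$ needed to invoke the hypothesis \eqref{eq:freq-split-by-scales} of Theorem~\ref{thm:GalerkinScales}. Indeed, since $F(h)$ is obtained by integrating the absolute value of the integrand in \eqref{eq:freq-split-by-scales} with $h = H$ (see \eqref{eq:Fdef}), the bound $F(h) \leq C h^{\alpha+\beta}$ is exactly the special case $h=H$ of \eqref{eq:freq-split-by-scales}; one must check that the scaling hypothesis holds for all dyadic refinements, which is implicit in the assumed power-law form. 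Granting this, the theorem yields
\begin{equation*}
  |\mathcal{E}(g)| \leq \frac{C\, h^{\alpha+\beta}}{(1-2^{-\alpha})(1-2^{-\beta})}.
\end{equation*}

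Finally I would substitute the lower-bound relation $h^{\alpha+\beta} \leq F(h)/c$ into this inequality to obtain
\begin{equation*}
  |\mathcal{E}(g)| \leq \frac{C}{(1-2^{-\alpha})(1-2^{-\beta})} \cdot \frac{F(h)}{c}
  = \frac{C\, F(h)}{c\,(1-2^{-\alpha})(1-2^{-\beta})},
\end{equation*}
which is the claimed bound. The argument is short and the only genuine point requiring care is the justification that the single-level two-sided condition \eqref{eq:estimator-bounded} legitimately furnishes the multi-level constant $C(\omega)$ appearing in \eqref{eq:freq-split-by-scales}; I expect this compatibility of the assumption on scales across refinement levels to be the main (and essentially the only) obstacle, everything else being elementary substitution.
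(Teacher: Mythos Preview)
Your proposal is correct and matches the paper's intent; in fact the paper states the corollary without proof, treating it as an immediate consequence of Theorem~\ref{thm:GalerkinScales}. Your one concern---whether the single-level upper bound on $F(h)$ legitimately supplies the two-parameter constant $C(\omega)$ in \eqref{eq:freq-split-by-scales}---is not an obstacle: as a corollary, the statement inherits the full hypothesis \eqref{eq:freq-split-by-scales} of the theorem, and the upper bound in \eqref{eq:estimator-bounded} is simply the diagonal case $h=H$ of that hypothesis with the same constant $C$. The only genuinely new ingredient is the lower bound, which permits the substitution $h^{\alpha+\beta}\le F(h)/c$ exactly as you describe.
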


Next, we derive a computable estimate for the one-dimensional model
problem equation \eqref{eq:model-problem}. Then, in the remainder of
this section, we determine the factor in the above estimate and
provide some experiments testing our estimator for the model problem.

\subsection{Computable error for the simple model problem}
\label{sec:comp-error-simple}

In Theorem \ref{thm:errorrepresentation} below, we give a computable
estimator, based on local error indicators, for the simple model
problem \eqref{eq:model-problem}. This estimator relies on a
representation of the discrete primal and dual solutions, given in
Lemma \ref{lem:discretesolutions}, where the value of $a$ in the
variational formulation is averaged over the $h$-elements. That is,
let $a_h$ be the spatial average of $a$ over the $h$-elements,
\begin{equation}
  \label{eq:spatial-avg-a}
  a_h(x) = \sum_{K_h} \ind{K_{h}} (x) \frac{1}{|K_h|} \int_{K_h} a(s) \dd s,
\end{equation}
for all $h$-elements $K_h$, where
\begin{equation*}
  \ind{K_{h}}(x) = \begin{cases} 1 & x \in K_h\\ 0 & x \not\in K_h \end{cases}
\end{equation*}
and where $|K_h| = \int_{K_h}\!\!\dd x$ denotes the size of element
$K_h$. The finite element solution $u_h$ to \eqref{eq:model-problem}
satisfies the discrete variational equation
\begin{equation}
  \label{eq:uhvar}
  \int_0^1 a u_h' v_h' \dd x  = v_h(1)   \qquad \as
\end{equation}
for all $v_h \in V_h$, where $V_h$ is the space of all piecewise
linear functions $v_h$ on $(0,1)$ with grid spacing $h$ that satisfy
$v_h(0)=0$.  Since $u_h^\prime$ and $v_h^\prime$ are constant on each
$h$-element we have
\begin{equation}
  \label{eq:uhdef}
  \int_0^1 a u_h' v_h' \dd x = \int_0^1 a_h u_h' v_h' \dd x. 
\end{equation}
Likewise, the discrete dual, $\lambda_h$, is the solution of the
variational equation
\begin{equation}
  \label{eq:lambdahdef}
  \int_0^1 a \lambda_h' v_h' \dd x = \int_0^1 a_h \lambda_h' v_h' \dd x =
  \int_0^1 g v_h \dd x,
\end{equation}
for all piecewise linear test functions, $v_h$. For the observable
$g$, let
\begin{equation}
  \label{eq:Gdef}
  G(x)=-\int_x^1g(s) \dd s
\end{equation}
denote the primitive function of the observable $g$. We then have the
following representations for the derivatives of the discrete primal
and dual solutions.

\begin{lemma}
  \label{lem:discretesolutions}
  The derivatives of the discrete primal and dual solutions $u_h$ and
  $\lambda_h$ are given by
  \begin{equation}\label{eq:discreteDerivatives}
    \begin{split}
      u_h'&=1/a_h,\\
      \lambda_h'&=G_h/a_h,
    \end{split}
  \end{equation}
  where
  \begin{equation*}
    G_h(x) = 
    \sum_{K_h} \ind{K_h}(x) \frac{1}{|K_h|} \int_{K_h} G(s) \dd s.
  \end{equation*}
  
\end{lemma}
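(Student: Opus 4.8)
The plan is to obtain both derivative formulas by direct verification, relying on two structural facts that are already available. First, because $u_h'$, $\lambda_h'$ and every $v_h'$ are constant on each $h$-element, the bilinear forms built from $a$ coincide with those built from the element average $a_h$, as recorded in \eqref{eq:uhdef} and \eqref{eq:lambdahdef}. Second, the set $\{v_h' : v_h \in V_h\}$ is exactly the space of all piecewise-constant functions on the $h$-grid: since $v_h(0)=0$ is imposed, the map from nodal values to element slopes is a bijection. Consequently, if a piecewise-constant quantity integrates to zero against every $v_h'$, it must vanish element by element, and this orthogonality is the mechanism that will pin down $u_h'$ and $\lambda_h'$.

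For the primal solution I would first rewrite the right-hand side of \eqref{eq:uhvar} as $v_h(1)=\int_0^1 v_h'\,\dd x$, using $v_h(0)=0$ and the fundamental theorem of calculus. Combining this with \eqref{eq:uhdef}, the discrete equation becomes $\int_0^1 (a_h u_h' - 1)\,v_h'\,\dd x = 0$ for all $v_h \in V_h$. Since $a_h u_h'-1$ is piecewise constant and $v_h'$ ranges over all grid piecewise-constant functions, this forces $a_h u_h' = 1$ on each element, i.e. $u_h' = 1/a_h$. Division is legitimate because $a=e^{B}$ is continuous on $[0,1]$ and hence bounded below by a positive constant pathwise, so $a_h>0$; moreover $1/a_h$ is piecewise constant, so its primitive vanishing at $0$ is an admissible element of $V_h$, which confirms existence and consistency of the identification.

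For the dual solution the argument is identical once the source term is put in the right form; the only new step is to integrate $\int_0^1 g v_h\,\dd x$ by parts. Using the primitive $G$ from \eqref{eq:Gdef}, the boundary contributions $G(1)v_h(1)-G(0)v_h(0)$ vanish because $G(1)=0$ and $v_h(0)=0$, leaving $\int_0^1 g v_h\,\dd x = \int_0^1 G\,v_h'\,\dd x$. As $v_h'$ is piecewise constant, $G$ may be replaced by its element average $G_h$ without changing the integral, exactly as $a$ was replaced by $a_h$. Then \eqref{eq:lambdahdef} reduces to $\int_0^1 (a_h\lambda_h' - G_h)\,v_h'\,\dd x = 0$ for all $v_h \in V_h$, and the same orthogonality argument gives $a_h\lambda_h' = G_h$ element-wise, that is $\lambda_h' = G_h/a_h$, as claimed in \eqref{eq:discreteDerivatives}.

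There is no substantial obstacle here; the proof is essentially a bookkeeping exercise, and the main care is in two places. The first is the exactness of the averaging identities, which is valid precisely because the relevant derivatives are element-wise constant and would fail for general test functions. The second is the correct handling of the boundary terms in the dual integration by parts, which is exactly where the choice of primitive $G$ with $G(1)=0$ is used. Confirming that the candidate derivatives are genuinely the slopes of admissible $V_h$ functions — continuous, piecewise linear, vanishing at $0$ — then completes the identification and simultaneously yields uniqueness of the discrete solutions.
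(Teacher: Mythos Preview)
Your proof is correct and follows essentially the same route as the paper: both use the averaging identities \eqref{eq:uhdef}--\eqref{eq:lambdahdef}, integrate the right-hand side by parts via $G$ with the boundary terms killed by $v_h(0)=G(1)=0$, and then read off the element-wise identities from testing against all $v_h$. You have simply made explicit the orthogonality step (that $\{v_h'\}$ spans the grid piecewise-constants) and the well-definedness of the division by $a_h$, which the paper leaves implicit.
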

\begin{proof}
  The expression for $u_h$ follows directly from \eqref{eq:uhvar} and
  \eqref{eq:uhdef}.  The expression for $\lambda_h$ follows by
  integration by parts of the right-hand side in
  \eqref{eq:lambdahdef}:
  \begin{equation*}
    \int_0^1 a_h \lambda_h' v_h' \dd x = \int_0^1 G v_h' \dd x = \int_0^1 G_h v_h' \dd x,
  \end{equation*} 
  where we use $v_h(0)=G(1)=0$.  Since this equation must hold for all
  test functions, $v_h$, the expression for $\lambda_h$ in the lemma
  follows.
\end{proof}

Each interval in the grid with step length $h$ corresponds to
precisely two subintervals in the grid with length $h/2$. In order to
have concise notation, entities corresponding to the left of these two
intervals are marked with a minus ($-$) sign, and entities
corresponding to the right subinterval are marked with a plus ($+$)
sign. The following somewhat surprisingly simple equality gives a
computable estimator for the observable for the Galerkin error for the
simple model problem \eqref{eq:model-problem} based on only one mesh
size.

\begin{theorem}\label{thm:errorrepresentation}
  \begin{equation*}
    \tilde{F}(h) = \int_0^1 a(u_{h/2}-u_h)^\prime
    (\lambda_{h/2}-\lambda_h)^\prime \dd x
    =\sum_{K_h} \frac{h^3}{16}a_h^* D^2 u_{h/2} D^2 \lambda_{h/2},
  \end{equation*}
  where the sum is over all $h$-elements in the coarse mesh, $K_h$,
  and $a_h^*$ is the harmonic mean of $a_{h/2}$ over each $K_h$, i.e.,
  \begin{equation*}
    a_h^*=\frac{2}{\frac{1}{a_{h/2}^+} + \frac{1}{a_{h/2}^-}}.
  \end{equation*}
  The second-order difference in each coarse grid interval is given by
  \begin{equation*}
    D^2u_{h/2} := \frac{D u_{h/2}^+ - D u_{h/2}^-}{h/2} =
    \frac{(u_{h/2}^+)' - (u_{h/2}^-)'}{h/2},
  \end{equation*}
  and similarly for $\lambda_{h/2}$.
\end{theorem}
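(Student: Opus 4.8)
The plan is to compute $\tilde F(h)$ by exploiting the explicit representations from Lemma~\ref{lem:discretesolutions} together with the fact that both $u_h'$ and $u_{h/2}'$, and likewise the dual derivatives, are piecewise constant. First I would note that by \eqref{eq:discreteDerivatives} we have $u_h' = 1/a_h$ and $u_{h/2}' = 1/a_{h/2}$, and analogously $\lambda_h' = G_h/a_h$, $\lambda_{h/2}' = G_{h/2}/a_{h/2}$. The integral then decomposes over the coarse mesh as $\tilde F(h) = \sum_{K_h} \int_{K_h} a\,(u_{h/2}-u_h)'(\lambda_{h/2}-\lambda_h)'\,\mathrm{d}x$, and on each coarse element $K_h$ the differences $(u_{h/2}-u_h)'$ and $(\lambda_{h/2}-\lambda_h)'$ are piecewise constant, taking one value on the left half-element and another on the right half-element. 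So the real work is a single coarse element computation, after which summation is immediate.

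On a fixed coarse element $K_h$ I would introduce the shorthand $a_{h/2}^\pm$ for the averages of $a$ over the two half-elements and observe that the coarse average satisfies $a_h = \tfrac12(a_{h/2}^+ + a_{h/2}^-)$, a consequence of \eqref{eq:spatial-avg-a} since the half-elements partition $K_h$ with equal length. The key step is to express the half-element values of $(u_{h/2}-u_h)' = 1/a_{h/2} - 1/a_h$ on the $\pm$ subintervals and simplify: using $a_h = \tfrac12(a_{h/2}^+ + a_{h/2}^-)$ one finds that $1/a_{h/2}^+ - 1/a_h$ and $1/a_{h/2}^- - 1/a_h$ are equal in magnitude and opposite in sign, each proportional to $(a_{h/2}^- - a_{h/2}^+)$ divided by the product $a_{h/2}^+ a_{h/2}^-$ and by $a_h$. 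I would similarly express the dual difference $(\lambda_{h/2}-\lambda_h)'$, whose $\pm$ values involve $G_{h/2}^\pm/a_{h/2}^\pm - G_h/a_h$. The second-order differences $D^2 u_{h/2}$ and $D^2 \lambda_{h/2}$ defined in the statement precisely package these left-minus-right jumps divided by $h/2$, so the goal is to massage the product of the two half-element contributions into the form $\tfrac{h^3}{16} a_h^* D^2 u_{h/2} D^2 \lambda_{h/2}$.

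Carrying out the integration, on each coarse element the integrand is constant on each half (length $h/2$), so $\int_{K_h} a (\cdots)(\cdots)\,\mathrm{d}x$ contributes $\tfrac{h}{2}\big(a_{h/2}^+ \Delta u^+ \Delta\lambda^+ + a_{h/2}^- \Delta u^- \Delta\lambda^-\big)$, where $\Delta u^\pm$ and $\Delta\lambda^\pm$ denote the respective half-element differences; here one uses that integrating against $a$ reproduces $a_{h/2}^\pm$ on each half by the definition of the average. Because the left and right jumps are antisymmetric, the two terms combine, and the factor $a_{h/2}^+ + a_{h/2}^-$ in the denominators collapses to the harmonic mean $a_h^* = 2/(1/a_{h/2}^+ + 1/a_{h/2}^-)$. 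Matching powers of $h/2$ from the two second-order differences (each contributing a $(h/2)^{-1}$) against the $h/2$ from the integration yields the claimed $h^3/16$ prefactor.

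I expect the main obstacle to be the algebraic bookkeeping in the second paragraph: verifying that the two half-element terms genuinely combine into a single product of $D^2 u_{h/2}$ and $D^2\lambda_{h/2}$ with the harmonic mean $a_h^*$ emerging cleanly, rather than some less symmetric combination of $a_{h/2}^\pm$. The delicate point is that the weighting by $a$ inside the integral is exactly what converts the two arithmetic denominators $a_{h/2}^\pm$ into the single harmonic mean; tracking this cancellation—and confirming that the residual primitive terms $G_h$, $G_{h/2}^\pm$ assemble correctly into $D^2\lambda_{h/2}$ without leftover $G$-dependent factors—is where care is required. Everything else is routine once the single-element identity is established and summed over $K_h$.
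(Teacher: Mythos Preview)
Your plan matches the paper's proof almost exactly: decompose over coarse elements, use Lemma~\ref{lem:discretesolutions} and $a_h=\tfrac12(a_{h/2}^++a_{h/2}^-)$, express the half-element jumps in terms of $D^2u_{h/2}$ and $D^2\lambda_{h/2}$, integrate against $a$ to pick up the factors $a_{h/2}^\pm$, and simplify to the harmonic mean. You also correctly flag the $\lambda$ side and the emergence of $a_h^*$ as the delicate points.

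One concrete correction, though: the half-element jumps are \emph{not} equal in magnitude. A short computation gives
\[
(u_{h/2}^\pm)'-u_h'=\frac{1}{a_{h/2}^\pm}-\frac{1}{a_h}
=\pm\frac{a_{h/2}^- - a_{h/2}^+}{2a_h\,a_{h/2}^\pm}
=\pm\,\frac{h}{4}\,\frac{a_{h/2}^\mp}{a_h}\,D^2u_{h/2},
\]
so the $+$ and $-$ jumps carry different weights $a_{h/2}^\mp$, and the same form holds for $\lambda$ (this is the identity the paper spends most of its effort on, since for $\lambda$ the $G$-terms must be shown to cancel). The mechanism that produces $a_h^*$ is therefore not antisymmetry of the jumps alone but the interaction of these $a_{h/2}^\mp$ factors with the $a_{h/2}^\pm$ weights coming from $\int a\,(\cdot)\,\mathrm{d}x$: one gets
\[
\frac{h}{2}\cdot\Big(\frac{h}{4}\Big)^2\frac{a_{h/2}^+(a_{h/2}^-)^2+a_{h/2}^-(a_{h/2}^+)^2}{a_h^2}\,D^2u_{h/2}\,D^2\lambda_{h/2}
=\frac{h^3}{16}\,a_h^*\,D^2u_{h/2}\,D^2\lambda_{h/2},
\]
using $a_{h/2}^+a_{h/2}^-=a_h\,a_h^*$. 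Once you replace the ``equal magnitude'' heuristic with these asymmetric relations, your outline goes through verbatim.
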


\begin{proof}
  By Lemma \ref{lem:discretesolutions}, we have
  \begin{equation}
    \label{eq:seconddiff}
    D^2u_{h/2} = D(1/a_{h/2}) = \frac{2}{h}\left(\frac{1}{a_{h/2}^+}-\frac{1}{a_{h/2}^-
      }\right) = \frac{2}{h}\frac{a_{h/2}^--a_{h/2}^+}{a_{h/2}^+a_{h/2}^-}.
  \end{equation}
  We want to compare this with the difference between first-order
  derivatives on grids with lengths $h/2$ and $h$,
  \begin{equation}\label{eq:gradtwogrids}
    (u_{h/2}^\pm)^\prime - u_h^\prime
    =\frac{1}{a_{h/2}^\pm}-\frac{1}{a_h} = \frac{a_h-a_{h/2}^\pm}{a_h
      a_{h/2}^\pm} = \pm  \frac{a_{h/2}^- - a_{h/2}^+}{2 a_h a_{h/2}^\pm},
  \end{equation}
  where in the last equality we used
  \begin{equation*}
    a_h=\frac{a_{h/2}^+ + a_{h/2}^-}{2}.
  \end{equation*}
  Combining \eqref{eq:seconddiff} and \eqref{eq:gradtwogrids}, we
  obtain
  \begin{equation}\label{eq:udifferentgrids}
    (u_{h/2}^\pm)' - u_h' = \pm \frac{h}{2} \frac{a_{h/2}^+ a_{h/2}^-}{2 a_h a_{h/2}^\pm} 
    D^2 u_{h/2}
    =\pm h\frac{a_{h/2}^\mp}{4a_h}D^2 u_{h/2}.
  \end{equation}

  Proceeding in a similar manner, we now prove that $\lambda_h$ and
  $\lambda_{h/2}$ satisfy the same relation, i.e.,
  \begin{equation}\label{eq:lambdadifferentgrids}
    (\lambda_{h/2}^\pm)' - \lambda_h' 
    =\pm h\frac{a_{h/2}^\mp}{4a_h}D^2 \lambda_{h/2}.
  \end{equation}
  We begin by computing $D^2 \lambda_{h/2}$ using Lemma
  \ref{lem:discretesolutions}. This yields
  \begin{align*}
    D^2\lambda_{h/2} = D\left(\frac{G_{h/2}}{a_{h/2}}\right) 
    &= \frac{2}{h}\left(\frac{G_{h/2}^+}{a_{h/2}^+} 
      -  \frac{G_{h/2}^-}{a_{h/2}^-}\right) \\
    &= \frac{2}{h}\left(\frac{G_h}{a_{h/2}^+} - \frac{G_h}{a_{h/2}^-}
      + \frac{G_{h/2}^+-G_h}{a_{h/2}^+} 
      + \frac{G_{h}-G_{h/2}^-}{a_{h/2}^-}\right)
  \end{align*}
  and, since $G_{h/2}^+-G_h=G_h-G_{h/2}^- = (G_{h/2}^+-G_{h/2}^-)/2,$
  we obtain
  \begin{equation}
    \label{eq:lambdaseconddiff}
    D^2\lambda_{h/2} = \frac{2}{h}\left(G_h\frac{a_{h/2}^- -
        a_{h/2}^+}{a_{h/2}^+a_{h/2}^-} + \frac{G_{h/2}^+ -
        G_{h/2}^-}{a_h^*} \right)
  \end{equation}
  where we recall, for the convenience of the reader, that
  \begin{equation*}
    \frac{1}{a_h^*} = \frac{1}{2} \left(\frac{1}{a_{h/2}^+} + \frac{1}{a_{h/2}^-}\right).
  \end{equation*}
  The difference between first-order derivatives on grids with lengths
  $h/2$ and $h$ is then given by
  \begin{equation}
    \label{eq:lambdafirstdiff}
    \begin{split}
      (\lambda_{h/2}^\pm)' - \lambda_h' =
      \frac{G_{h/2}^\pm}{a_{h/2}^\pm} - \frac{G_h}{a_h} =
      \frac{G_{h/2}^\pm-G_h}{a_{h/2}^\pm} +
      G_h\Big(\frac{1}{a_{h/2}^\pm} -\frac{1}{a_h}\Big) \\ =
      \frac{G_{h/2}^\pm-G_h}{a_{h/2}^\pm} \pm G_h\frac{a_{h/2}^- -
        a_{h/2}^+}{2a_ha_{h/2}^\pm}.
    \end{split}
  \end{equation}
  Combining equations \eqref{eq:lambdaseconddiff} and
  \eqref{eq:lambdafirstdiff} yields \eqref{eq:lambdadifferentgrids}:
  \begin{equation*}
    \begin{split}
      (&\lambda_{h/2}^\pm)' - \lambda_h' \mp h
      \frac{a_{h/2}^\mp}{4a_h}
      D^2\lambda_{h/2} \\
      &= \frac{G_{h/2}^\pm-G_h}{a_{h/2}^\pm} \pm G_h\frac{a_{h/2}^- -
        a_{h/2}^+}{2a_ha_{h/2}^\pm} \mp h
      \frac{a_{h/2}^\mp}{4a_h}\frac{2}{h}\left(G_h\frac{a_{h/2}^- -
          a_{h/2}^+}{a_{h/2}^+a_{h/2}^-} + \frac{G_{h/2}^+ -
          G_{h/2}^-}{a_h^*}
      \right)\\
      &= \frac{G_{h/2}^\pm-G_h}{a_{h/2}^\pm} \mp
      \frac{a_{h/2}^\mp}{2a_ha_h^*} (G_{h/2}^+ - G_{h/2}^-) \\
      &= \pm \frac{G_{h/2}^+ -
        G_{h/2}^-}{2}\left(\frac{1}{a_{h/2}^\pm}-\frac{a_{h/2}^\mp}{a_h
          a_h^*}\right)=0,
    \end{split}
  \end{equation*}
  where in the last equality we used
  \begin{equation*}
    a_h a_h^* = \frac{a_{h/2}^+ + a_{h/2}^-}{\frac{1}{a_{h/2}^+} +
      \frac{1}{a_{h/2}^-}} = a_{h/2}^+ a_{h/2}^-.
  \end{equation*}

  Finally, combining \eqref{eq:udifferentgrids} and
  \eqref{eq:lambdadifferentgrids}, the expressions for the differences
  of first-order derivatives on the nested grids for the primal and
  dual variables, we obtain
  \begin{equation*}
    a_{h/2}^\pm (D u_{h/2}^\pm - D u_h)
    (D \lambda_{h/2}^\pm - D \lambda_h) 
    = \left(\frac{h}{4}\right)^2 
    \frac{a_{h/2}^\pm(a_{h/2}^\mp)^2}{a_h^2} 
    D^2 u_{h/2} D^2 \lambda_{h/2},
  \end{equation*} 
  an expression for the integrand in the statement of the theorem. By
  summing over $K_h$, the coarse grid with step length $h$, and
  recalling that each coarse interval has two subintervals of length
  $h/2$, we obtain
  \begin{align*}
    \tilde{F}(h) 
    &= \sum_{K_h} \frac{h}{2} \left(\frac{h}{4}\right)^2
      \frac{a_{h/2}^-(a_{h/2}^+)^2+a_{h/2}^+(a_{h/2}^-)^2}{\big(
      \frac{a_{h/2}^+ + a_{h/2}^-}{2}\big)^2} D^2 u_{h/2} D^2 \lambda_{h/2} \\                 
    &= \sum_{K_h}
      \frac{h^3}{16}\frac{2}{\frac{1}{a_{h/2}^+}+\frac{1}{a_{h/2}^-}}
      D^2 u_{h/2} D^2 \lambda_{h/2} =:\sum_{K_h} \frac{h^3}{16}a_h^* D^2
      u_{h/2} D^2 \lambda_{h/2},
  \end{align*}
  where $a_h^*$ is the harmonic mean of $a_{h/2}$ over each coarse
  interval.
\end{proof}

Thus, given assumption \eqref{eq:freq-split-by-scales}, Theorem
\ref{thm:errorrepresentation} suggests a computable estimate for the
observable of the Galerkin error for problem
\eqref{eq:model-problem}. In practice, we estimate the observable for
the Galerkin error by
\begin{equation}
  \label{eq:computable-error-simple-eg}
  \mathcal{E}^h(g) \leq C' F(h) \leq  C E_{est}^h(g),
\end{equation}
where the estimator,
\begin{equation*}
  E^h_{est} (g) := 
  \sum_{K_h} \frac{h^3}{16}a_h^* \left| D^2 u_{h/2} D^2 \lambda_{h/2} \right|,
\end{equation*}
is a sum over $h$-elements, $K_h$, $C^\prime$ is a constant determined
from \eqref{eq:estimator-bounded}, and $C=C(\omega)$ is a positive
random variable. Note that the functional $E^h_{est}$ depends
implicitly on $g$ through the dual variable $\lambda$, and here, and
in the sequel, it will be useful to include the superscript $h$ to
distinguish between the pathwise Galerkin errors of different mesh
sizes. With these ideas in mind, we investigate in \S
\ref{sec:determining-c} the $C$ appearing in
\eqref{eq:computable-error-simple-eg} and determine that a constant
factor is suitable for producing a reliable estimator.

\begin{remark} \rm
  \label{rmk:using-2h-instead-of-h}
  The estimator $E_{est}^h(g)$, based on $F(h)$, relies on the
  computed solutions $u_{h/2}$ and $\lambda_{h/2}$. From an
  applications perspective, it is not unreasonable to use
  $E_{est}^{2h}$ instead of $E_{est}^h$ as an upper-bound predictor
  for $\mathcal{E}^{h}$. We note that, for the simple model problem,
  the ratio $\tilde{F}(2h)/\tilde{F}(h) \approx 2$ since both
  $D^2 u_h$ and $D^2 \lambda_h$ are $O_{\prob}(h^{-\frac{1}{2}})$.
\end{remark}

\subsection{Numerical tests for the simple model problem}
\label{sec:determining-c}

In the previous section, we derived a computable estimator for the
Galerkin error observable given by
\eqref{eq:computable-error-simple-eg} where $C = C(\omega)$ is a
random positive constant. Next, we determine a choice of $C$ that
produces a reliable estimator by sampling the ratio of
$\mathcal{E}^h(g)$ to $E^{h}_{est}(g)$ for different observables for
the Galerkin error for the simple model problem. In numerical
experiments, we observe that the sample mean of these realizations
provides a reasonable factor in
\eqref{eq:computable-error-simple-eg}. We consider the observables
$\mathcal{E}^{h}(1)$ and $\mathcal{E}^{h}(\delta_{\frac{1}{2}})$ that
correspond to computing the integral
$\mathcal{E}^{h}(1) = \int_0^1 (u - u_{h}) \dd x$ and the difference of
point values
$ \mathcal{E}^{h}(\delta_{\frac{1}{2}}) = u(0.5) - u_{h}(0.5), $
Respectively.

We compute the desired samples of $\mathcal{E}^{h}$, using a reference
solution on a fine mesh, $h = 2^{-17}$, and the associated estimator,
$E^{h}_{est}$, for $M=10^4$ sample paths. Here we take $M$ large to
ensure that the statistical error is small and emphasize that, in
practice, only $O(1)$ samples are needed to have a good hold on the
statistical error in our error estimates. Each realization of the
conductivity, $a$, is generated on an extremely fine mesh and then a
quadrature mesh size $k = 2^{-22}$ is used to ensure that the
quadrature error is negligible. Figure \ref{fig:distributionC} shows
the sample distribution of $C$, where the areas of the histogram bins
have been normalized to $1$, corresponding to the observable $g = 1$
(top row) and the observable $g = \delta_{\frac{1}{2}}$ (bottom row)
computing using the values $h = 2^{-10}$ (left column) and
$h = 2^{-12}$ (right column).  The sample mean, $\mu$, and the sample
standard deviation, $\sigma$, are also displayed. We observe, for both
observables, the sample mean is approximately $2$.

\begin{figure}[]
  \centering \subfloat{%
    \includegraphics[width=0.99\textwidth]{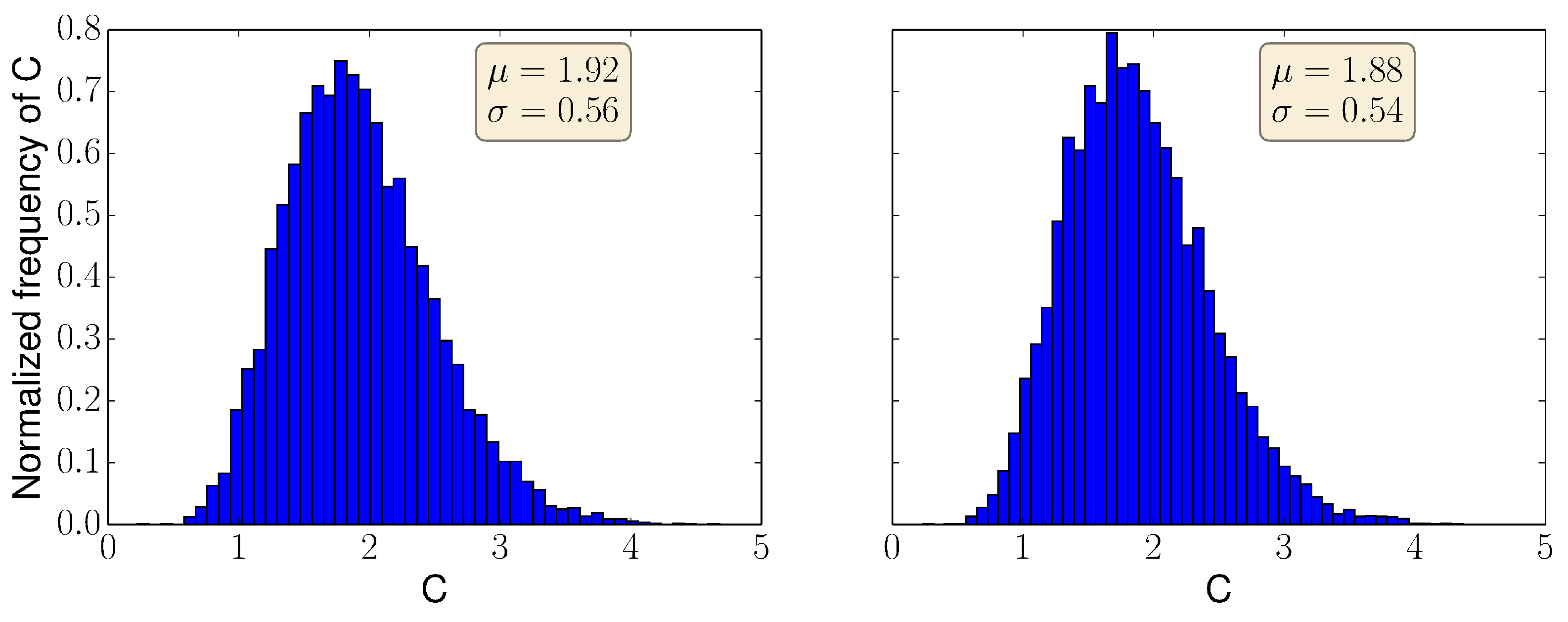}
  }\\%
  \hfill \subfloat{%
    \includegraphics[width=0.99\textwidth]{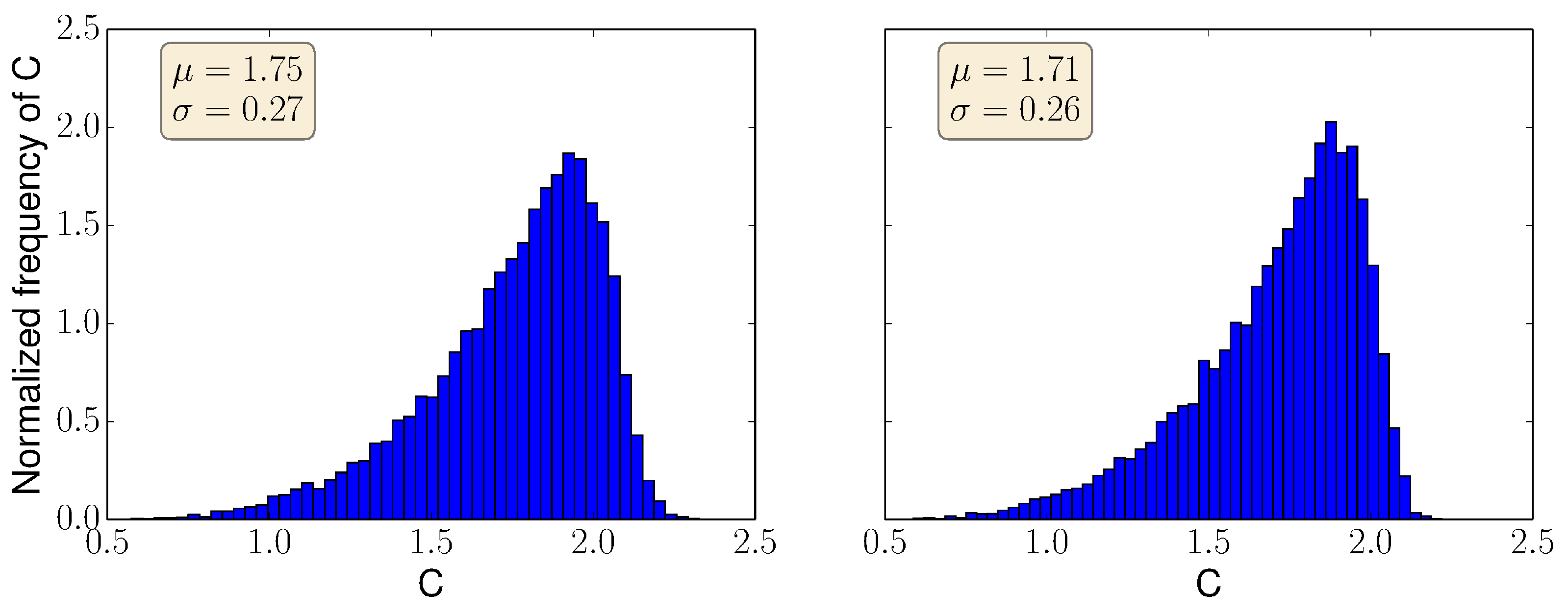}
  }%
  \caption{Histogram sample distribution of
    $C = |\mathcal{E}^h(g)|/E^{h}_{est}(g)$ for different combinations
    of mesh sizes and observables. All histograms are computed from
    $10^4$ realizations of $C$ and the areas of the histogram bins are
    normalized to 1.  The sample distributions for $\mathcal{E}^h(1)$
    (top row) and $\mathcal{E}^h(\delta_{\frac{1}{2}})$ (bottom row)
    are based on finite element solutions of $u_h$ on mesh sizes
    $h=2^{-10}$ (left-hand column) and $h = 2^{-12}$ (right-hand
    column).  The results suggests a reliable estimator for the simple
    model problem \eqref{eq:model-problem} can be attained using a
    constant $C=2$.}
  \label{fig:distributionC}
\end{figure}

In Figure~\ref{fig:comparisonGisgeneric}, we test the fit of using
$C = 2$, independently of $\omega$, with the estimators for two
different observables, $g = 1$ (top row) and
$g = \delta_{\frac{1}{2}}$ (bottom row). This comparison is done in
mean, based on 250 samples, in the left-hand column and is done
pathwise in the right-hand column, corresponding to an error and
estimator pair drawn uniformly at random from the previously generated
realizations. Choosing $C = 2$ in
\eqref{eq:computable-error-simple-eg} reliably estimates the
observable Galerkin error for the generic observables $g=1$ and
$g = \delta_{\frac{1}{2}}$. Once again, to explore the limits of the
estimator, we consider the observable $g = cos(2\pi\cdot)$, a
non-generic choice selected to exhibit cancellations between the local
indicators. In Figure \ref{fig:comparisonGiscos}, we plot the error
and estimator, using $C = 2$, in expectation, based on 250 samples,
(left-hand panel) and pathwise (right-hand panel). As anticipated, the
estimator, which is the sum of local error indicators, can become
small arbitrarily due to cancellations. In the next section, \S
\ref{sec:2d-numer-exper}, we introduce and test an estimator for a
two-dimensional model problem, but first we provide heuristics to
motivate our use of the pathwise Galerkin error.

\begin{figure}[]
  \centering \subfloat{%
    \includegraphics[width=0.49\textwidth]{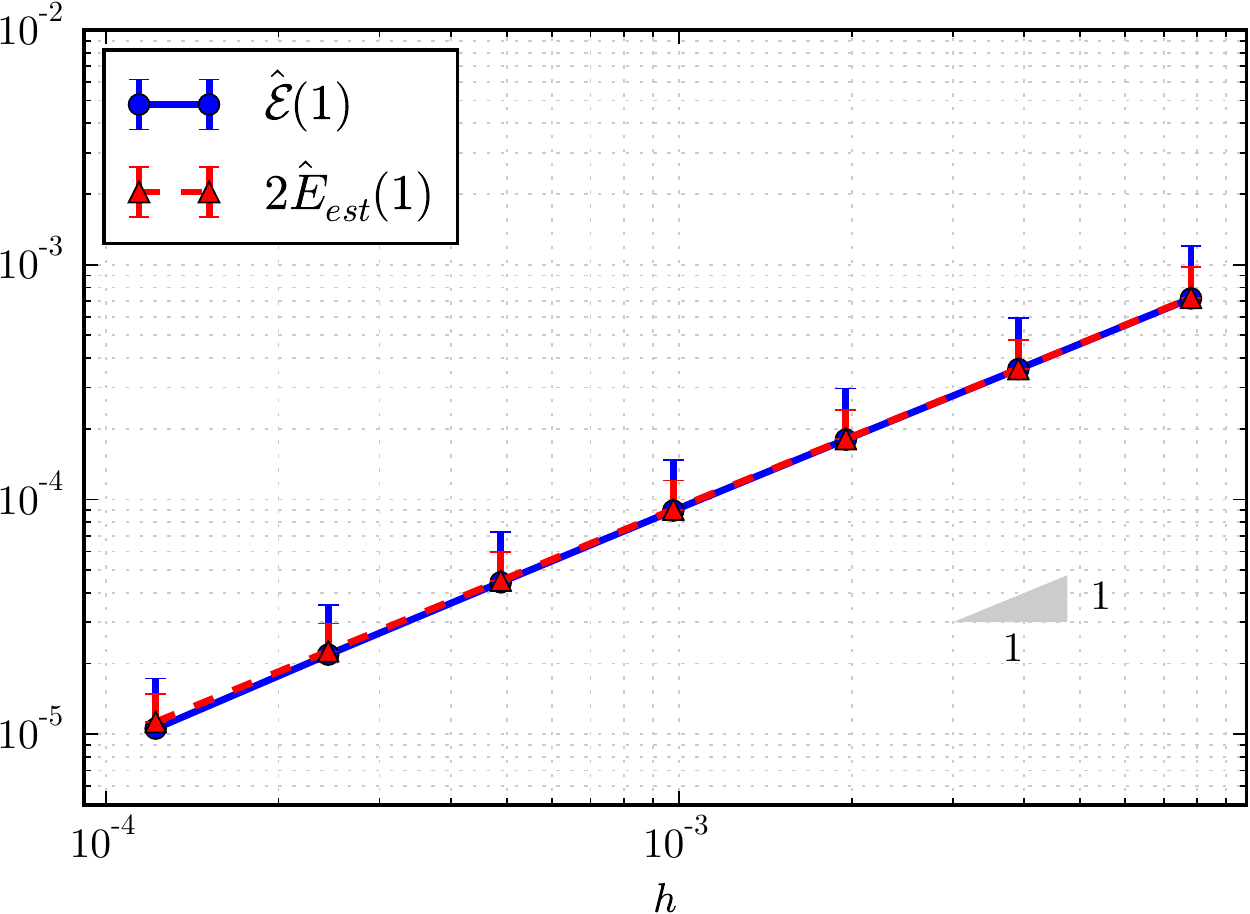}
  }%
  \hfill \subfloat{%
    \includegraphics[width=0.49\textwidth]{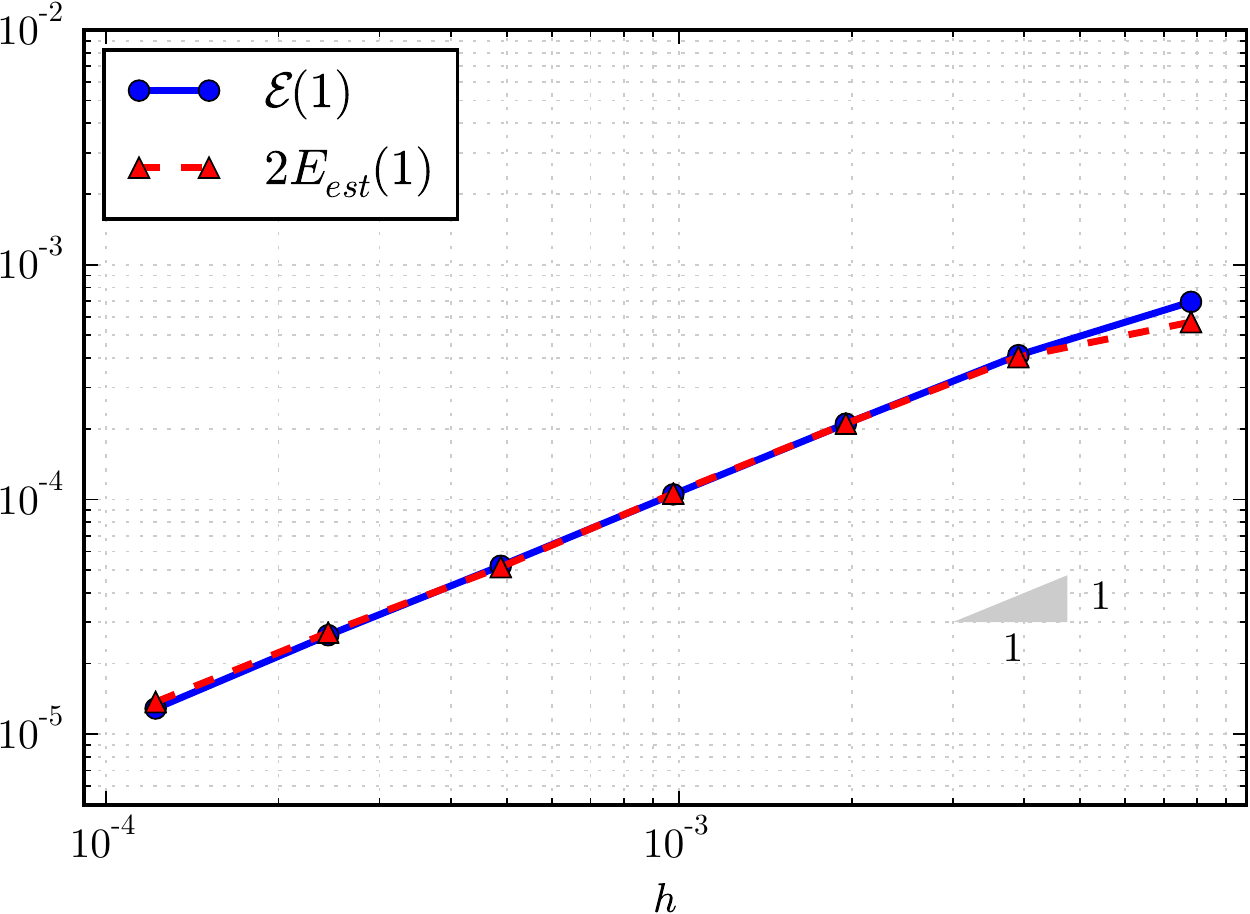}
  }%

  \subfloat{%
    \includegraphics[width=0.49\textwidth]{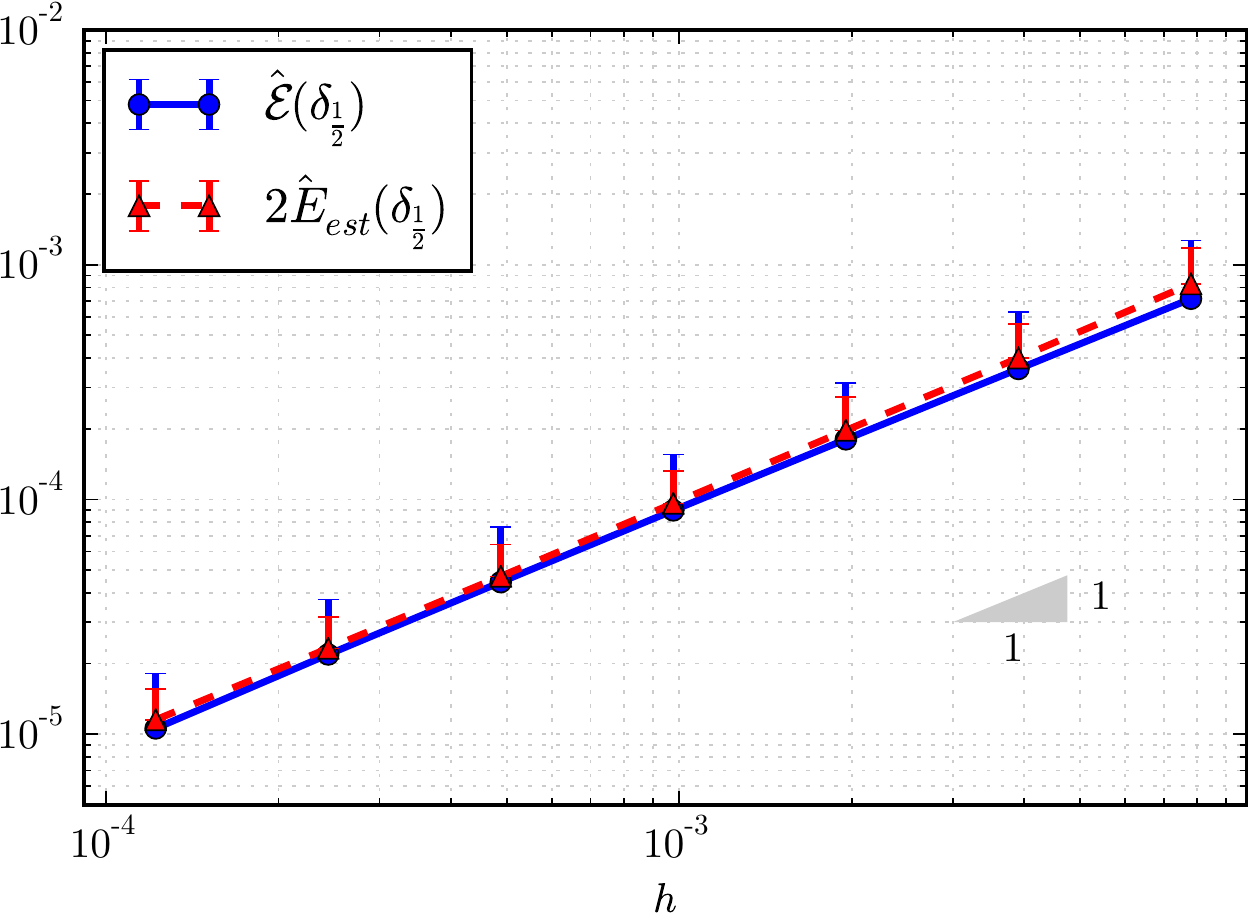}
  }%
  \hfill \subfloat{%
    \includegraphics[width=0.49\textwidth]{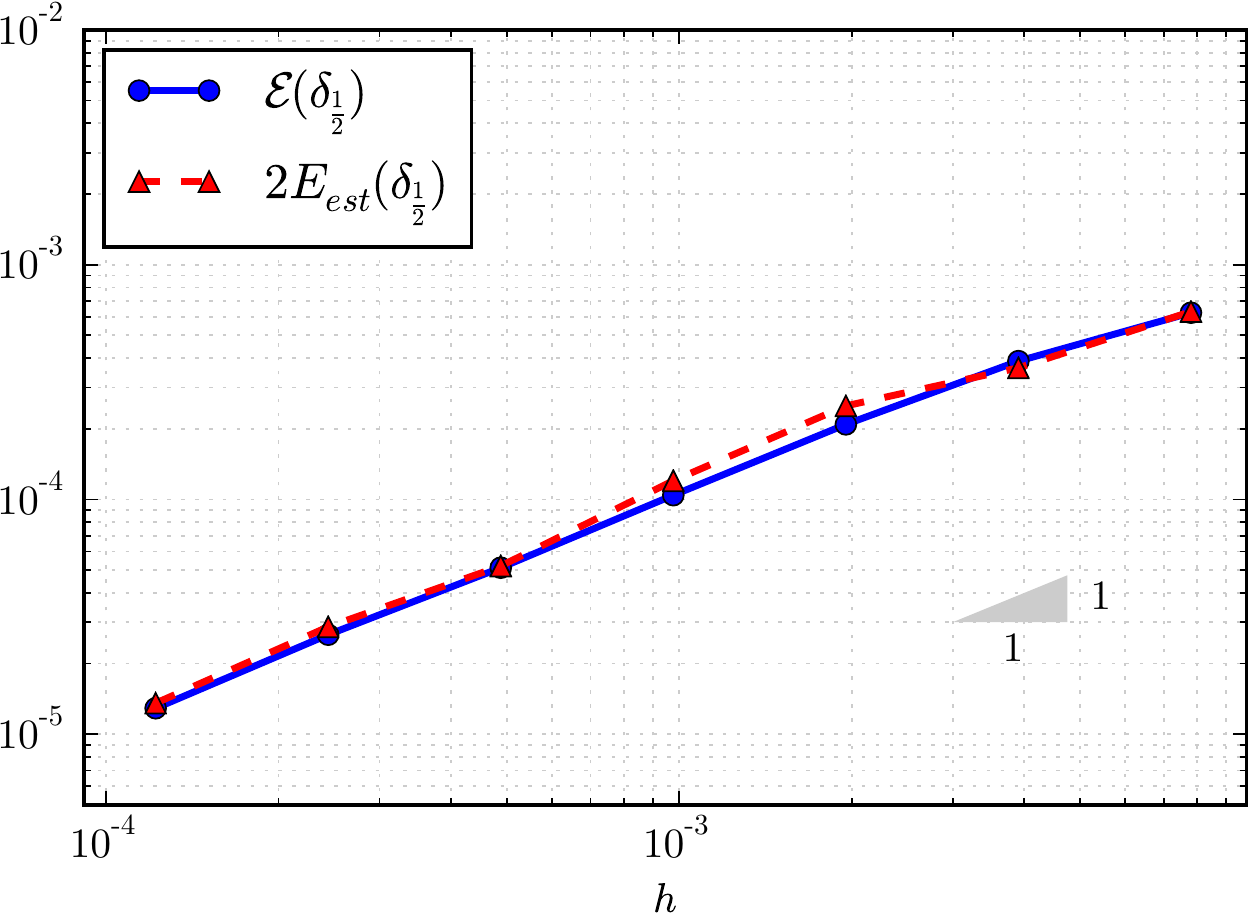}
  }%
  \caption{Using a constant $C = 2$, independent of $\omega$, produces
    a reliable estimator for the Galerkin error for the simple model
    problem \eqref{eq:model-problem} for the generic observables $g =
    1$ (top row) and $g = \delta_{\frac{1}{2}}$ (bottom row) both in
    expectation, based on 250 samples with error bars indicating two
    standard deviations, (left-hand column) and pathwise for one
    realization (right-hand column).}
  \label{fig:comparisonGisgeneric}
\end{figure}

\begin{figure}[]
  \centering \subfloat{%
    \includegraphics[width=0.49\textwidth]{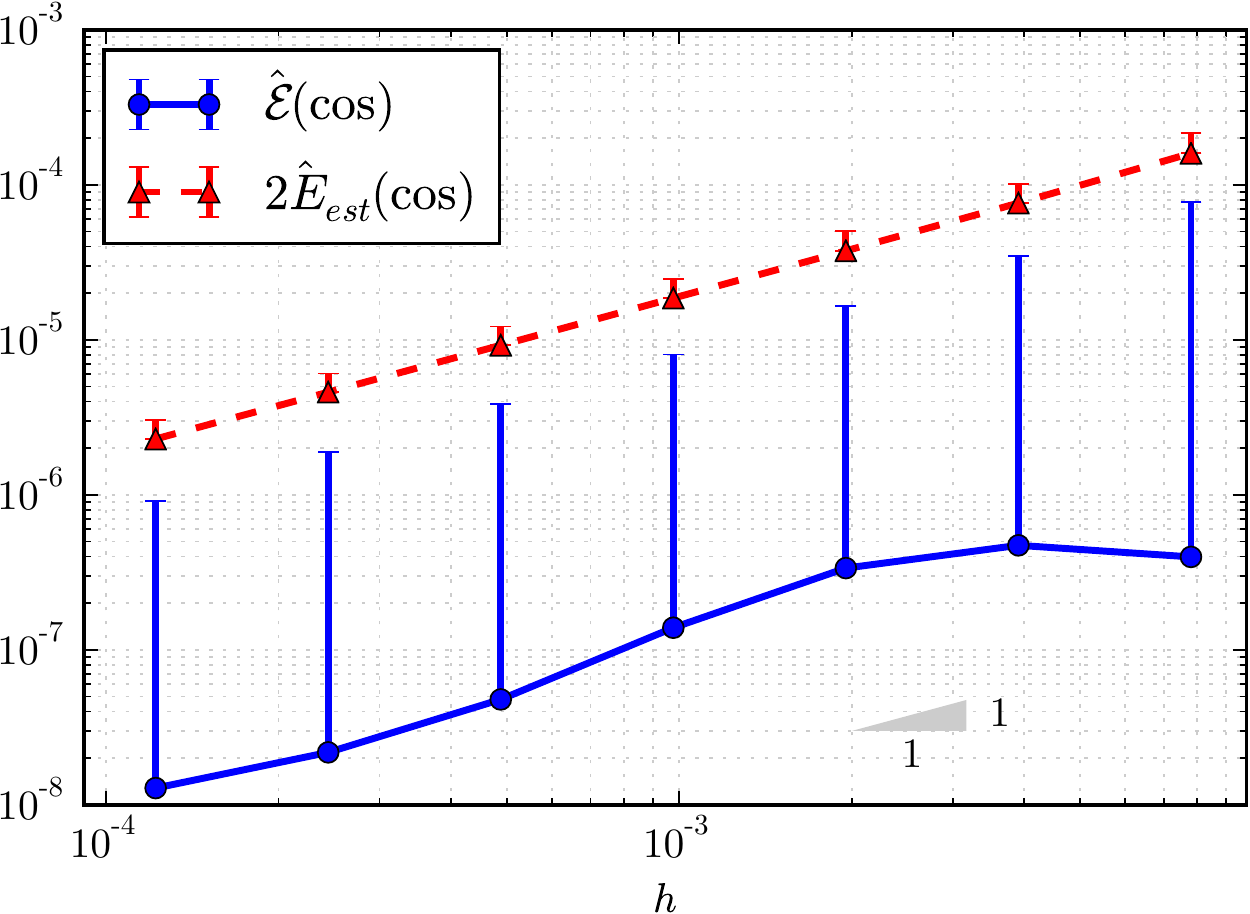}
  }%
  \hfill \subfloat{%
    \includegraphics[width=0.49\textwidth]{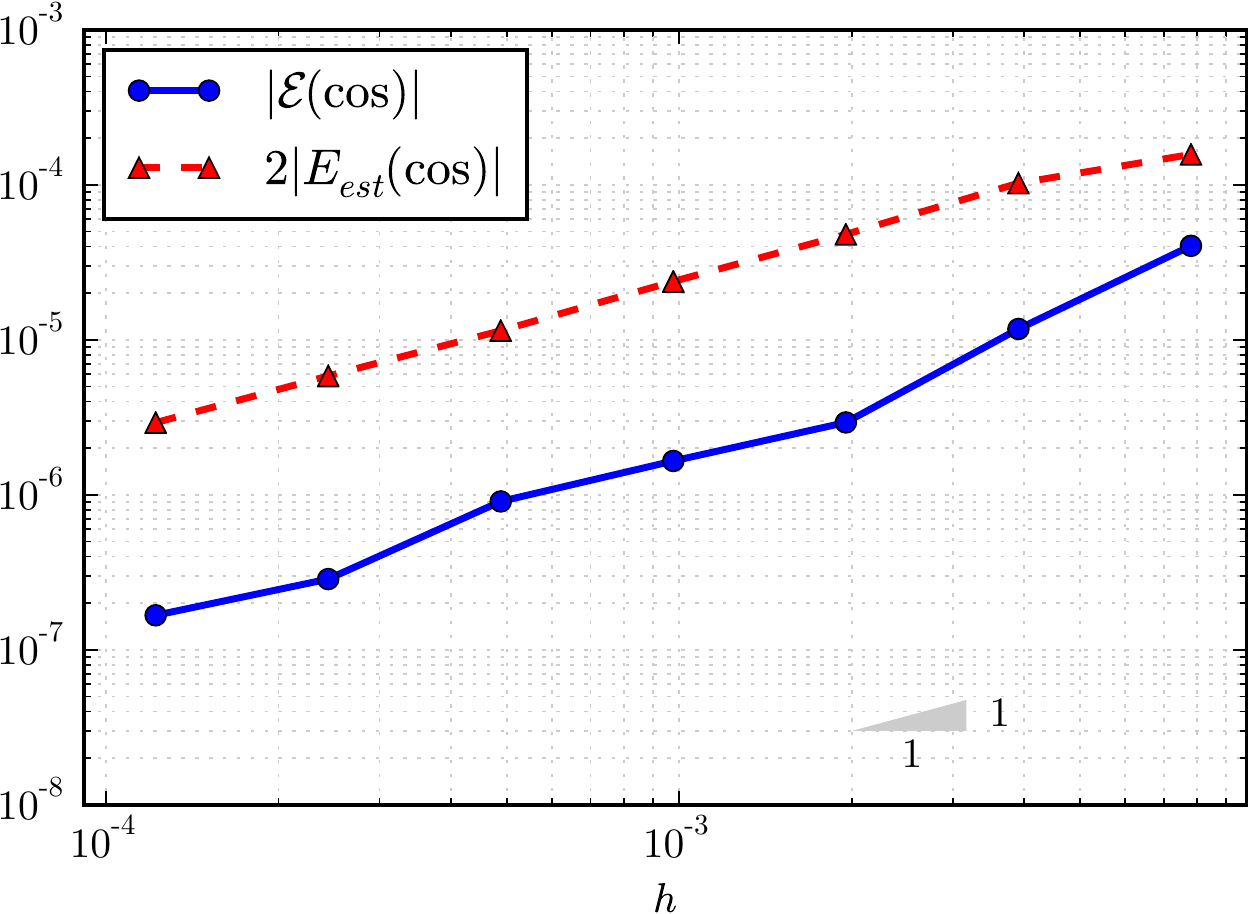}
  }%
  \caption{The expected estimator (left-hand panel) and the pathwise
    estimator for one realization (right-hand panel) for the Galerkin
    error for the simple model problem for the pathological
    observable, $g = \cos(2\pi\cdot)$, can become small due to
    cancellations. Here the expected estimator is based on 250 samples
    with error bars indicating two standard deviations.}
  \label{fig:comparisonGiscos}
\end{figure}

\subsection{The Pathwise Galerkin error rate is arbitrarily close to
  the expected Galerkin error rate}
\label{appendix:pathwise-Galerkin-vs-mean}

In the preceding we studied the pathwise Galerkin error rather than
the expected Galerkin error. Due to possible stochastic cancellations,
one might suspect that the convergence rate of the expected Galerkin
error to be higher than that of the pathwise Galerkin error. We now
study the relationship between these rates in detail for the following
random PDE that is related to the model
problem~\eqref{eq:model-problem}, but slightly easier to analyze in
this context:
\begin{equation}\label{eq:rpdeWiener}
  -(a(\omega,x) u'(\omega,x))' = 0,
\end{equation}
$\as$ for $(\omega, x) \in \Omega \times [0,1]$ subject to the
boundary conditions $u(\omega, 0)=0$ and
$a(\omega,1)u'(\omega,1) = 1$, $\as$, with random field $a= e^{W}$,
where $W$ denotes a standard Wiener process. The next theorem shows
that under certain assumptions on the observable applied to
problem~\eqref{eq:rpdeWiener}, the pathwise Galerkin error rate is
arbitrarily close to the expected Galerkin error rate.

\begin{theorem}
  Consider the random PDE~\eqref{eq:rpdeWiener} and let $u_h$ denote
  the discrete solution on the Galerkin space defined in
  Section~\ref{sec:comp-error-simple}. For a given observable $g$,
  assume that the function $G$, as defined in \eqref{eq:Gdef}, is in
  $C^1([0,1])$, non-negative, and strictly positive on a positive
  measure subset of $[0,1]$. Then, for any $\delta >0$, there exists a
  pathwise constant $c \in [0, \infty)$ such that
  \begin{equation*}
   \abs{(u-u_h, g)} \leq c h^{1-\delta},
  \end{equation*}
  and another constant $c'>0$ such that
  \begin{equation}\label{eq:expGalerkin}
    \lim_{h \to 0} h^{-1} \E{(u-u_h, g)} 
    = \int_0^1 \frac{e^{x/2} G(x)}{6} \dd x >  c^\prime.
  \end{equation}
\end{theorem}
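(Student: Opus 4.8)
The plan is to turn the observable into a sum of local, quadrature-type errors and then analyze it both pathwise and in expectation. First I would use the explicit derivatives $u'=1/a=e^{-W}$ and $u_h'=1/a_h$ from Lemma~\ref{lem:discretesolutions} and integrate by parts against the primitive $G$ from \eqref{eq:Gdef}. Since $(u-u_h)(0)=0$ and $G(1)=0$, the boundary terms vanish and one obtains $(u-u_h,g)=\sum_{K_h}\int_{K_h}\pr{a_h^{-1}-a^{-1}}G\,\dd x$, a sum of element contributions over the coarse mesh. On each element $K_h=[x_k,x_{k+1}]$ I would freeze the field at the left node by writing $W(x)=W(x_k)+B(x-x_k)$, where $B$ is a Brownian motion on $[0,h]$ independent of the past. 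This factorizes the integrand as $e^{-W(x_k)}\pr{I_h^{-1}-e^{-B(\tau)}}G(x_k+\tau)$ with $\tau=x-x_k$ and $I_h:=h^{-1}\int_0^h e^{B(t)}\,\dd t$, up to the $O(h)$ variation of $G$ across the element.

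Because $e^{-W(x_k)}$ is independent of the increment $B$ and $\E{e^{-W(x_k)}}=e^{x_k/2}$, the expected element contribution equals $e^{x_k/2}\int_0^h \E{I_h^{-1}-e^{-B(\tau)}}\,G(x_k+\tau)\,\dd\tau$. A short stochastic Taylor expansion is the heart of the expected-rate computation: from $\E{I_h}=1+h/4+o(h)$ and $\mathrm{Var}(I_h)=h/3+o(h)$ one gets $\E{I_h^{-1}}=1-\pr{\E{I_h}-1}+\E{(I_h-1)^2}+o(h)=1+h/12+o(h)$, while $\E{e^{-B(\tau)}}=e^{\tau/2}$. Integrating in $\tau$ and replacing $G(x_k+\tau)$ by $G(x_k)$ (a higher-order change since $G\in C^1$) produces an element bias $\tfrac{h^2}{6}\,e^{x_k/2}G(x_k)$ up to higher order. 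Summing over the $O(1/h)$ elements recognizes a Riemann sum and yields $\lim_{h\to0}h^{-1}\E{(u-u_h,g)}=\tfrac16\int_0^1 e^{x/2}G(x)\,\dd x$, which is bounded below by a positive $c'$ precisely because $G$ is non-negative and strictly positive on a set of positive measure.

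For the pathwise bound I would exploit a cancellation. After freezing $G$ at $x_k$, the first-order (in $B$) part of $I_h^{-1}-e^{-B(\tau)}$ is $B(\tau)-h^{-1}\int_0^h B$, whose integral over the element is exactly zero, so the surviving contribution is quadratic in $B$. Using the a.s.\ H\"older-$(1/2-\delta)$ continuity of $W$ furnished by Kolmogorov's theorem, $\sup_{[0,h]}\abs{B}=O(h^{1/2-\delta})$ pathwise, which makes this remainder $O(h^{1-2\delta})$ per unit length (the term coming from the $O(h)$ variation of $G$ is of still higher order). Summation gives $\abs{(u-u_h,g)}\le c\,h^{1-2\delta}$, and relabeling $\delta$ yields the stated $c\,h^{1-\delta}$ with a pathwise-finite $c=c(\omega)$ depending only on $\sup_{[0,1]}e^{\pm W}$ and the H\"older constant of $W$, both finite $\as$

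The main obstacle is making these expansions rigorous uniformly in $k$ and controlling the Taylor remainders in expectation, rather than just formally. This requires moment bounds on the reciprocal exponential functional $I_h^{-1}$, which follow from $I_h^{-1}\le \exp\pr{\sup_{[0,h]}(-B)}$ together with the Gaussian tails of the Brownian running maximum, so that $I_h^{-1}$ has finite moments of every order. With these bounds in hand, the remainders in the expansion of $\E{I_h^{-1}-e^{-B(\tau)}}$ are summable to $o(h)$ after division by $h$, and the Riemann-sum limit is justified.
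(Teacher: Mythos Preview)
Your proposal is correct and arrives at the same core computation as the paper, but the organization differs in one instructive way. The paper does not integrate by parts directly; instead it passes through Galerkin orthogonality and the dual, writing
\[
(u-u_h,g)=\int_0^1 a\,(u-u_h)'(\lambda-\lambda_h)'\,\dd x
=\int_0^1 a\,(a^{-1}-a_h^{-1})^2 G_h\,\dd x
+\int_0^1(a^{-1}-a_h^{-1})(G-G_h)\,\dd x,
\]
so that the leading term is \emph{manifestly} quadratic in $(a^{-1}-a_h^{-1})$. This makes the pathwise $O(h^{1-2\delta})$ bound fall out immediately from the $(1/2-\delta)$-H\"older continuity of $W$, with no cancellation needed. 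Your direct integration by parts gives $\int(a_h^{-1}-a^{-1})G$, a term only linear in the local error, and you then have to recover the quadratic behavior by observing that the first-order-in-$B$ piece $B(\tau)-h^{-1}\int_0^h B$ integrates to zero on each element. That cancellation is exactly what the identity $\int_0^h a(a^{-1}-a_h^{-1})^2\,\dd x=\int_0^h(a^{-1}-a_h^{-1})\,\dd x$ encodes in the paper, so the two routes are algebraically equivalent; the dual form just makes it visible from the start. For the expectation, both approaches factor out $e^{-W(x_k)}$ by independence of increments and Taylor-expand $\E{I_h^{-1}}$ about $1$, leading to the same $h^2/6$ element bias and Riemann-sum limit. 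One small slip: with your representation and your own values $\E{I_h^{-1}}=1+h/12+o(h)$ and $\int_0^h e^{\tau/2}\,\dd\tau=h+h^2/4+O(h^3)$, the integral $\int_0^h\E{I_h^{-1}-e^{-B(\tau)}}\,\dd\tau$ equals $-h^2/6+o(h^2)$, not $+h^2/6$; the magnitude and rate are unaffected.
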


\begin{proof}
  One may verify that the derivatives of the discrete primal and dual
  solutions for the considered problem satisfy
  equation~\eqref{eq:discreteDerivatives} in
  Lemma~\ref{lem:discretesolutions}.  Introducing the mesh points
  $x_n = nh$, for $n =0,1,\ldots h^{-1} =:N$, the pathwise Galerkin
  error, $\mathcal{E}(g) = (u-u_h,g)$, takes the form
  \begin{equation}\label{eq:Holdersplit}
    \begin{split}
       \mathcal{E}(g) &=\int_0^1 a(u-u_h)'(\lambda-\lambda_h)'\dd x\\
      &=\int_0^1 a(a^{-1}-a_h^{-1})^2 G_h(x)\dd x
      +\int_0^1\pr{\frac{1}{a}-\frac{1}{a_h}}(G-G_h) \dd x\\
      &=\sum_{n=0}^{N-1} \int_{x_n}^{x_{n+1}}
      a(x)\left(a^{-1}(x)-a(\xi_n)^{-1}\right)^2 G_h(x)\dd  x\\
      &\qquad + \sum_{n=0}^{N-1}
      \int_{x_n}^{x_{n+1}}\pr{\frac{1}{a}-\frac{1}{a_h}}(G-G_h) \dd
      x\\
      & =: I+II,
    \end{split}
  \end{equation}
  where $G_h(x):=\int_{x_n}^{x_{n+1}} G(y)\dd y$ and $a_h(x)=
  h^{-1}\int_{x_n}^{x_{n+1}} a(y)\dd y=a(\xi_n)$ when $x_n\leq x <
  x_{n+1}$, and for some path dependent $\xi_n\in[x_n,x_{n+1}]$. By
  $a\in C^{\frac{1}{2}-\delta}([0,1])$ and $G\in C^1([0,1])$, the
  error terms in \eqref{eq:Holdersplit} satisfy $\lvert I \rvert\leq c
  h^{1-2\delta}$ and $\lvert II \rvert\leq c h^{3/2-\delta}$, so the
  Galerkin error has the upper bound
  \begin{equation*}
    \abs{\mathcal{E}(g)}\le c h^{1-2\delta},
  \end{equation*}
  for a pathwise dependent constant $c \in [0, \infty)$ and $h$
  sufficiently small.

  For verifying inequality~\eqref{eq:expGalerkin}, we split, as
  above, the expected Galerkin error as
  \begin{equation}\label{eq:Galerkinsplit}
    \E{\mathcal{E}(g)} 
    =\E{\int_0^1 \pr{\frac{1}{a}-\frac{1}{a_h}}(G-G_h)
    \dd x} + \E{\int_0^1 a(a^{-1}-a_h^{-1})^2 G_h \dd x}.
  \end{equation}
  Using the representation
  $a^{-1}(x_n+y)=e^{-W(x_n+y)}= e^{-W(x_n)-(W(x_n+y)-W(x_n))},$ for
  $y \in [0, h]$ and $n=0,1,\ldots, N-1$, we proceed computing the
  second part of the expected Galerkin error \eqref{eq:Galerkinsplit}:
  \begin{equation}\label{eq:c*}
    \begin{split}
      &\E{\int_0^1  a(a^{-1}-a_h^{-1})^2 G_h \dd x}\\
      &\qquad = \sum_{n=0}^{N-1} \E{\int_{0}^{h}a(x_n+y)
        \left(a^{-1}(x_n+y)-a_h^{-1}\right)^2 G_h(x_n+y) \dd y}\\
      &\qquad = \sum_{n=0}^{N-1} \mexp
      \Bigg[e^{-W(x_n)}G_h(x_n)\int_{0}^{h} e^{W(x_n+y)-W(x_n)} \\
      & \qquad\qquad \times \pr{e^{-(W(x_n+y)-W(x_n))}-
        \frac{1}{h^{-1} \int_0^h e^{W(x_n+z)-W(x_n)} \dd z} }^2 \dd y \Bigg]\\
      &\qquad = \sum_{n=0}^{N-1} \E{e^{-W(x_n)}G_h(x_n)}
      \E{\int_{0}^{h}a(y)\left(a^{-1}(y)-a_h^{-1}\right)^2 \dd y},
    \end{split}
  \end{equation}
  where we used in the third equality that $W(x_n)$ and
  $W(x_n+y)-W(x_n)$ are independent Wiener increments. Similarly we
  have that the first part of the expected Galerkin error
  \eqref{eq:Galerkinsplit} can be bounded as
  \begin{equation}\label{eq:Galerkinbound}
    \begin{split}
      &\left| \E{\int_0^1 \pr{\frac{1}{a}-\frac{1}{a_h}}(G-G_h)
      \dd x} \right| \leq \max_{0\leq x \leq 1} \lvert G-G_h
      \rvert \E{\int_0^1
      \pr{\frac{1}{a}-\frac{1}{a_h}} \dd x} \\
      &\qquad = \max_{0\leq x \leq 1} \lvert G-G_h \rvert \sum_{n=0}^{N-1}
      \E{e^{-W(x_n)}} \E{\int_{0}^{h} \pr{\frac{1}{a}-\frac{1}{a_h}}
      \dd y}.
    \end{split}
  \end{equation}
  Furthermore,
  \begin{equation}\label{eq:errorint}
    \int_0^{h} a\pr{\frac{1}{a}-\frac{1}{a_h}}^2 \dd x
    = \int_0^{h}\pr{\frac{1}{a}- \frac{2}{a_h} + \frac{a}{a_h^2}} \dd x\\
    =\int_0^{h} \frac{1}{a} -\frac{1}{a_h} \dd x.
  \end{equation}
  We now show that the expected value of the integral in
  \eqref{eq:errorint} is given by
  $\frac{h^2}{6}+O(h^{5/2})$. The expected value of the
  first term in the last integral in \eqref{eq:errorint} is given by
  \begin{equation*}
    \E{\int_0^h \frac{1}{a} \dd x} =\int_0^h \E{e^{-W(x)}} \dd x =
    \int_0^h e^{x/2} \dd x = 2(e^{h/2}-1).
  \end{equation*}
  The expected value of the second term in the last integral in
  \eqref{eq:errorint} is given by
  \begin{equation}\label{eq:ahinv}
    \begin{split}
      &\E{\int_0^h \frac{1}{a_h} \dd x} = h\E{\frac{1}{a_h}} = h
      \E{\pr{\frac{1}{h} \int_0^h e^{W(x)} \dd x}^{-1}} \\
      &\qquad =  h \E{\pr{\underbrace{\frac{1}{h} \int_0^h \pr{e^{W(x)}-1}
      \dd x}_{=:z} +1}^{-1}}
      =h \E{1-z+z^2-\frac{z^3}{1+z}}.
    \end{split}
  \end{equation}
  As before, $\E{z}=\frac{2}{h}(e^{h/2}-1)-1$.  The next term is
  given by
  \begin{equation*}
    \begin{split}
      \E{z^2} &= \frac{1}{h^2}\E{\pr{\int_0^h (e^{W(x)}-1) \dd x}^2}\\
      &=\frac{1}{h^2}\E{\int_0^h\int_0^h(e^{W(x)+W(y)}-e^{W(x)}-e^{W(y)}-1)
      \dd x \dd y}.
    \end{split}
  \end{equation*}
  Using that $\E{(W(x)+W(y))^2} = x+y+2\min(x,y)$ and Taylor
  expansion give
  \begin{equation*}
    \begin{split}
      \E{z^2}&= \frac{1}{h^2}\int_0^h\int_0^h
      \pr{\frac{x+y}{2}+\min(x,y)-\frac{x}{2}-\frac{y}{2}} \dd x \dd y
      + O(h^2)\\
      &= \frac{1}{h^2}\int_0^h\int_0^h \min(x,y) \dd x \dd y + O(h^2)
      = \frac{h}{3} + O(h^2).
    \end{split}
  \end{equation*}
  The last term in \eqref{eq:ahinv} can be bounded using
  Cauchy-Schwarz:
  \begin{equation*}
    \left| \E{\frac{z^3}{1+z}}\right| \leq
    \sqrt{\E{z^6}}\sqrt{\E{\frac{1}{(1+z)^2}}}=
    \sqrt{\E{z^6}}\sqrt{\E{\frac{1}{a_h^2}}}.
  \end{equation*}
  Both factors can be treated using Jensen's inequality:
  \begin{equation*}
    \E{z^6}=\E{\pr{\frac{1}{h}\int_0^h(e^{W(x)}-1) \dd x}^6} \leq
    \frac{1}{h} \int_0^h \E{(e^{W(x)}-1)^6} \dd x = O(h^3)
  \end{equation*}
  and
  \begin{equation*}
    \E{\frac{1}{a_h^2}} = \E{\pr{\frac{1}{h}\int_0^h e^{W(x)} \dd
    x}^{-2}} \leq \E{\frac{1}{h}\int_0^h e^{-2W(x)} \dd x} =
    \frac{e^h-1}{h}= O(1).
  \end{equation*}
  Hence
  \begin{align}\label{eq:d*}
      \E{\int_0^h \pr{\frac{1}{a}-\frac{1}{a_h}}\dd x} 
      &= 2(e^{h/2}-1)-h\left(1-\left(\frac{2}{h}(e^{h/2}-1) -1\right)
        +\frac{h}{3}\right) + O(h^{5/2}) \notag \\
      &= \frac{h^2}{6} + O(h^{5/2}).
  \end{align}

  By equations~\eqref{eq:c*}, \eqref{eq:Galerkinbound},
  \eqref{eq:d*}, the equality $\E{e^{W(x)}} = e^{x/2}$, and that
  $G\in C^1([0,1])$, we conclude that
  \begin{equation*}
    \begin{split}
      \lim_{h\to 0} h^{-1} \E{(u-u_h,g)} =\frac{1}{6} \int_0^1
      e^{x/2} G(x) \dd x > c',
    \end{split}
  \end{equation*}
  where the inequality follows from $G$ being non-negative and
  strictly positive on a positive measure subset of $[0,1]$.
\end{proof}

%
\section{Two-dimensional numerical experiments}
\label{sec:2d-numer-exper}
%

We consider as a model the problem of finding
$u \in \mathcal{H}^{1}(\Gamma)$ for $\Gamma = (0,1)^2$ such that
\begin{equation}
  \label{eq:model-prob-2d}
  - \divergence(a(x) \nabla u(x)) = 1 \qquad \text{for } x \in \Gamma
\end{equation}
subject to the homogeneous boundary condition $u(x) = 0$ for all
$x \in \partial \Gamma$. We assume the conductivity $a$ is given and
that $\log a$ has an exponential covariance given by the two-point
covariance function
\begin{equation}
  \label{eq:covariance}
  \cov (x, y) = \sigma^2 e^{-\| x - y \|/\ell}
\end{equation}
for a given variance, $\sigma^2$, and correlation length, $\ell$. For
our numerical experiments, we use the circulant embedding method (see
for example \cite{AsmussenGlynn:2007}) to generate samples of a
lognormal field $a$ having the desired covariance structure. In Figure
\ref{fig:example-field}, one realization of $a(x,y)$, with parameters
$\ell = 0.2$ and $\sigma^2 = 1$, on a uniform mesh of size
$k = 2^{-10}$ is plotted (left-hand panel) together with the
corresponding finite element approximation, $u_h(x,y)$, of problem
\eqref{eq:model-prob-2d} with mesh size $h = 2^{-5}$ (right-hand
panel). The approximation uses piecewise linear elements,
$K \in \mathcal{T}^{h}$, the standard three-point triangulation of
$\Gamma$ mesh with size $h$.

\begin{figure}[]
  \centering \subfloat{%
    \includegraphics[width=0.49\textwidth]{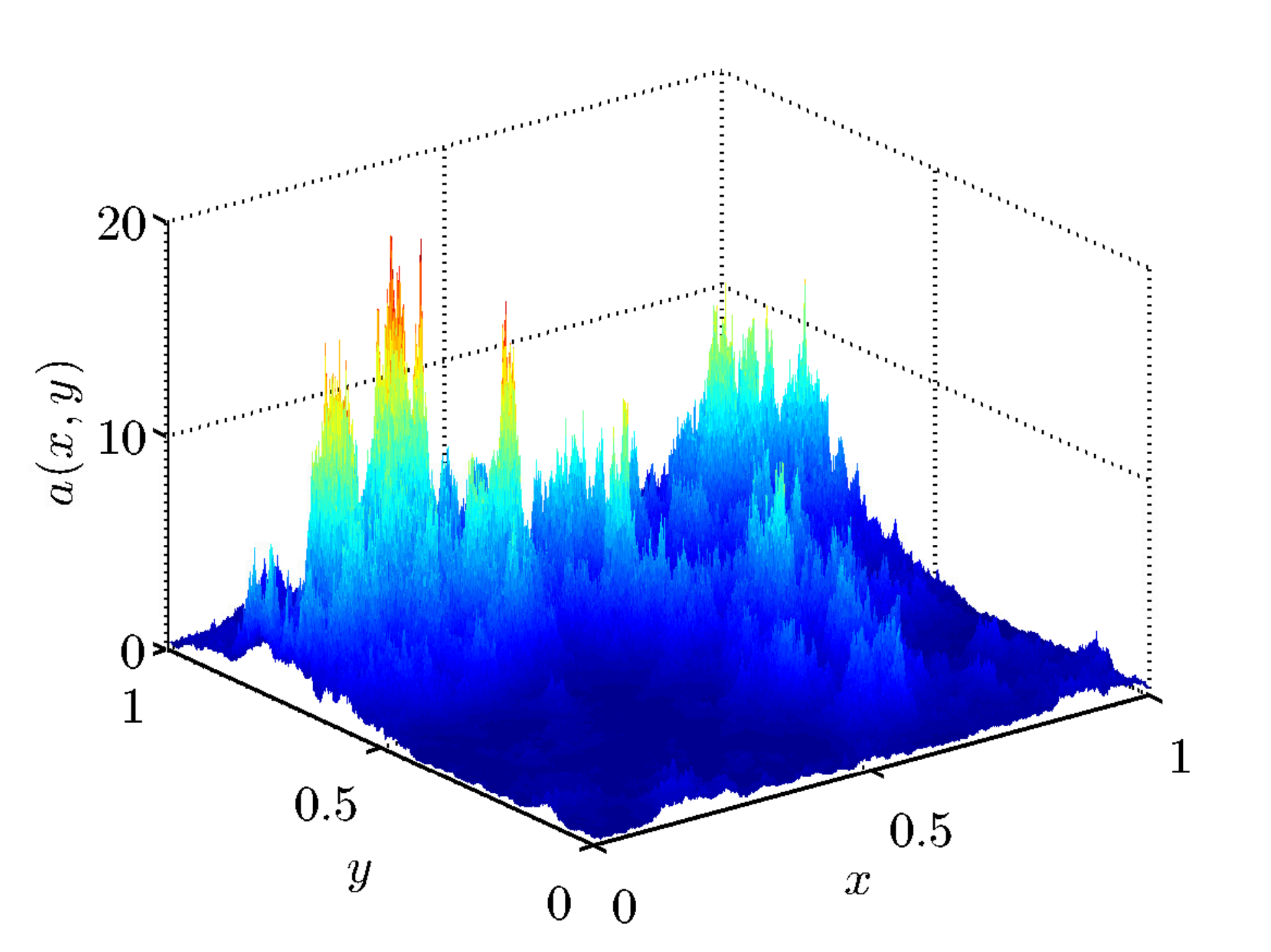}
  }%
  \hfill \subfloat{%
    \includegraphics[width=0.49\textwidth]{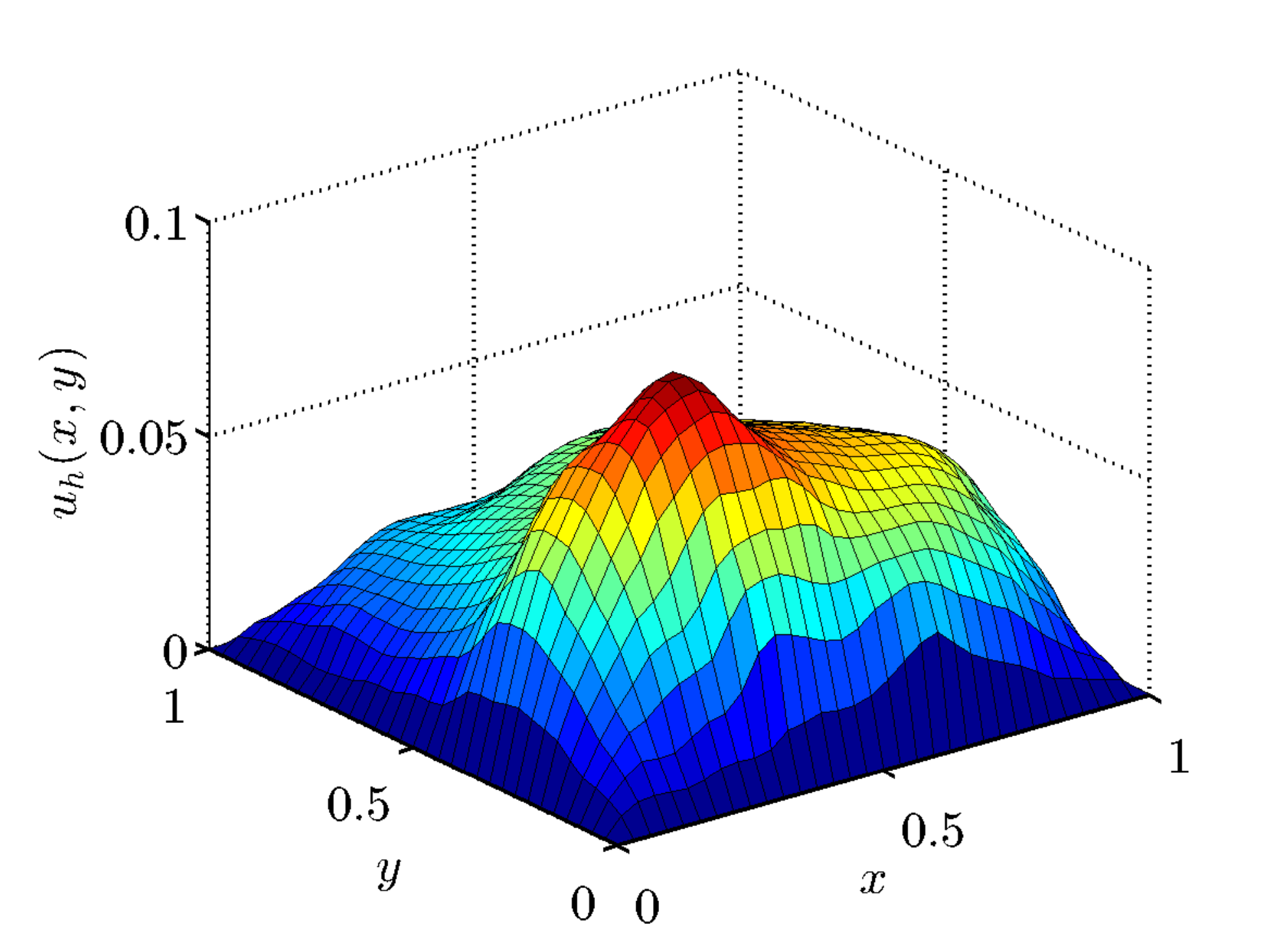}
  }%
  \caption{One sample of a rough lognormal conductivity, $a(x,y)$,
    with covariance \eqref{eq:covariance} with $\ell = 0.2$ and
    $\sigma^2 = 1$, generated using a circulant embedding method
    (left-hand panel) and the corresponding pathwise finite element
    approximation to problem \eqref{eq:model-prob-2d} (right-hand
    panel). For conductivities arising in groundwater flow problems
    the typically short correlation lengths involved motivate the use
    of MC finite element methods over stochastic Galerkin methods.}
  \label{fig:example-field}
\end{figure}
 
The considerations in \S \ref{sec:scales} suggest that, in practice,
\begin{equation*}
  \mathcal{E}^h(g) \leq C \int_\Gamma a_h 
  \left|\nabla ( u_{h/2} - u_h ) \cdot \nabla 
    (\lambda_{h/2} - \lambda_{h})\right| \dd x
\end{equation*}
admits an estimator based on local error indicators $a_h$,
$\nabla u_{h/2}$, $\nabla u_h$, $\nabla \lambda_{h/2}$, and
$\nabla \lambda_{h}$. We approximate this error by the estimator
\begin{equation*}
  E_{est}^h(g) :=  \frac{1}{2} h^2 \sum_{K} a_h 
  \sum_{i =1, 2} \left| \partial_{x_i} ( u_{h/2} - u_h) 
    \partial_{x_i} (\lambda_{h/2} - \lambda_h) \right|
\end{equation*}
where $K \in \mathcal{T}^{h}$ and $\partial_{x_i}$ is the derivative
in the $x_{i}$ direction. In addition, we compare with a second
estimator
\begin{equation*}
  E^{h}_{reg}(g) := \frac{1}{2} h^2 \sum_{K} \frac{h^2}{16} a_h 
  \sum_{i = 1,2} \left| D_i^2 u_{h} \cdot D_i^2 \lambda_{h} \right|.
\end{equation*}
The quantity $E^{h}_{reg}(g)$ is proven in \cite{MoonEtAl:2006} to be
an asymptotically exact estimate of the Galerkin approximation error,
as the mesh size tends to zero, in the case the solution is
sufficiently regular, i.e. if $a \in \mathcal C^1(\bar\Gamma)$,
$u\in\mathcal C^3(\bar\Gamma)$ and
$\lambda\in \mathcal C^3(\bar\Gamma)$, using bilinear finite elements
with possible hanging node refinements. The asymptotic exactness of
$E^{h}_{reg}$, which is related $E^h_{est}$ by constant factor of
order one, shows, for instance, that the error estimator does not miss
the contribution from the jumps of the derivatives at the edges,
although these are not explicitly present in the estimator.

In Figure \ref{fig:2d-err-est}, we plot $\mathcal{E}^h(1)$, using a
reference mesh of $h = 2^{-10}$, and the estimators $E^{h}_{est}(1)$
and $E^{h}_{reg}(1)$, all using a quadrature mesh, $k = 2^{-13}$. This
figure demonstrates that the estimator $E^{h}_{est}$, based on local
error indicators, gives an estimate of the error that is realized
pathwise provided that one has the quadrature error under control. We
choose $C = 1$ to show that the estimator holds with a nonrandom
constant, keeping in mind that this choice of constant is ad hoc and
does not represent a best fit. Further, we note that the Galerkin
error for the problem with rough lognormal conductivity is on the
order of $h$. On the other hand Figure~\ref{fig:2d-err-est} also
shows, as expected, that $E^h_{reg}$ misses the high frequency error
with a factor in the case that the solutions have low regularity,
analogously to the high- and low-frequency estimators in
Figure~\ref{fig:comp-low-freq-rough-smooth}.  Estimates for the
observable quadrature error are the subject of the next section.

\begin{figure}[]
  \centering
  \includegraphics[width=0.49\textwidth]{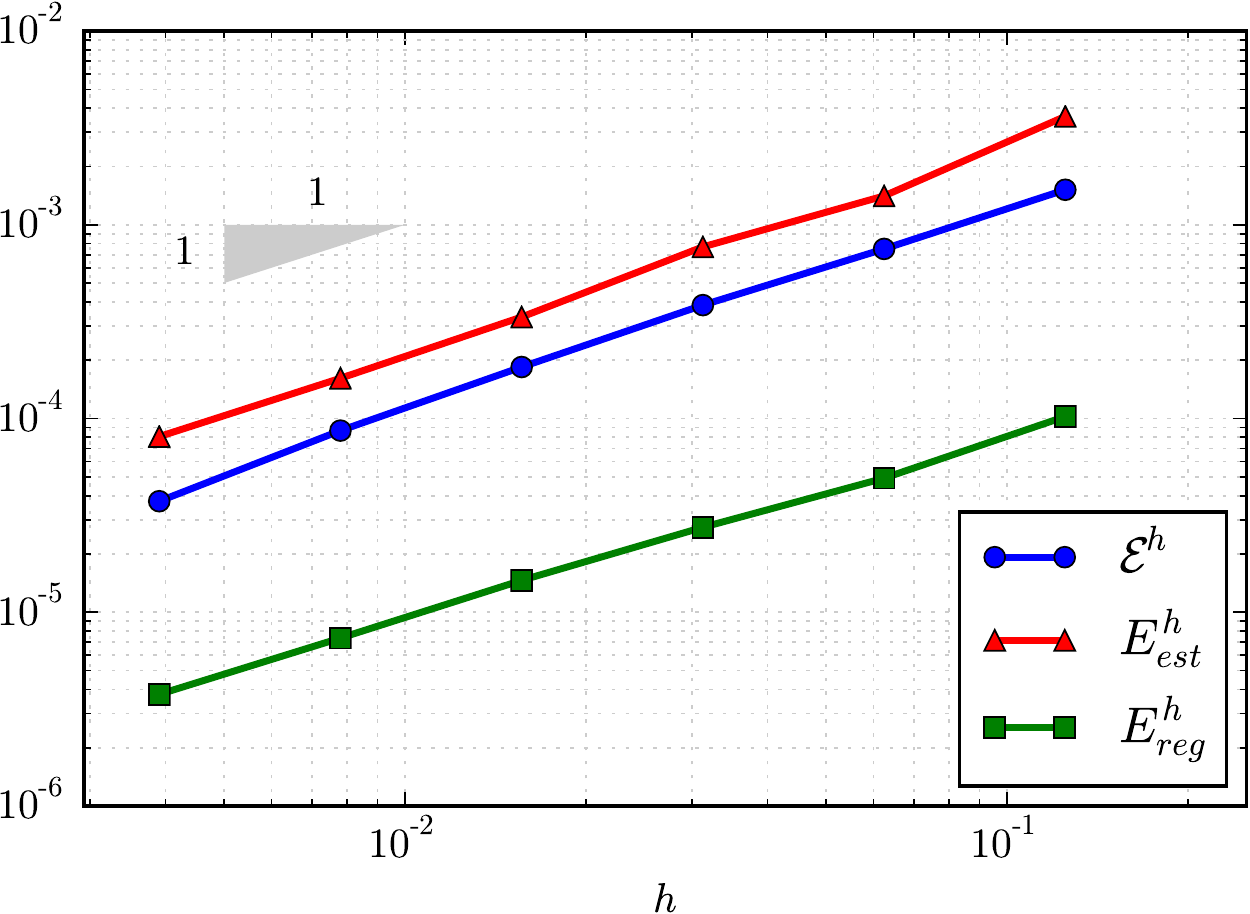}
  \caption{For the two-dimensional test problem
    \eqref{eq:model-prob-2d}, the estimator, $E_{est}^h(1)$,
    constructed from local error indicators reliably estimates the
    pathwise Galerkin error, $\mathcal{E}^h(1)$, with a constant
    factor independently of $\omega$. As expected, $E_{reg}^h(1)$
    misses the high frequency error with a factor, but the asymptotic
    exactness demonstrates that the estimator does not miss the
    contribution form the jumps of the derivatives at the edges of
    elements although they are not explicitly present.}
  \label{fig:2d-err-est}
\end{figure}

%
\section{Quadrature error}
\label{sec:quadrature-error}
%

The numerical experiments that we have presented thus far focused
solely on the Galerkin error at the expense of overkilling the
quadrature error. Next, we consider the problem of estimating the
expected quadrature error when computing an observable using a
piecewise linear finite element approximation of problem
\eqref{eq:rpde}. If the conductivity $a$ enjoys a certain degree of
regularity, then a quadrature rule can be chosen such that its error
contribution is negligible compared with the Galerkin error, as is
typically the case in the deterministic setting. In contrast, when a
conductivity $a$ has a rough lognormal distribution such as the
log-Brownian bridge distribution used in \eqref{eq:model-problem},
Figure \ref{fig:quad-rate} suggests that the quadrature error is on
the same order as the Galerkin error
(cf. Figure~\ref{fig:comp-low-freq-rough-smooth}, where the rate of
the Galerkin error is plotted). In the present setting, it is
therefore pertinent to construct an estimator for the expected
quadrature error committed in calculating an observable.

To discuss the quadrature error, we first present some additional
notation. Recall that a finite element solution, $u_h \in V_h$, to
\eqref{eq:rpde} satisfies
\begin{equation}
  \label{eq:forward-exact}
  \int_\Gamma a_h(x) \nabla u_h(x) \cdot \nabla v_h(x) \dd x 
  = \int_\Gamma f(x) v_h(x) \dd x \qquad \as
\end{equation}
for all $v_h \in V_h \subset V$, where $a_h$ is the spatial average of
$a$ over each $h$-element $K_h$, as defined in
\eqref{eq:spatial-avg-a}. Here, we assume that the finite element
solution, $u_h$, is computed without quadrature error. That is, we
assume that the components of the stiffness matrix, which are of the
form
\begin{equation*}
  \int_\Gamma a_h(x) \nabla \phi_i(x) \cdot \nabla \phi_j(x) \dd x,
\end{equation*}
are computed exactly. Likewise, an exact variational formulation of
the discrete dual, $\lambda_h \in V_h$, satisfies
\begin{equation}
  \label{eq:dual-exact}
  \int_\Gamma a_h(x) \nabla \lambda_h(x) \cdot \nabla v_h(x) \dd x 
  = \int_\Gamma g(x) v_h(x) \dd x \qquad \as 
\end{equation}
for all $v_h \in V_h \subset V$.

Together with \eqref{eq:forward-exact} and \eqref{eq:dual-exact}, we
consider formulations where a quadrature rule is used in the assembly
of the stiffness matrix in the finite element problem. We subdivide
the $h$-mesh, of elements $K_h$, into a quadrature $k$-mesh, of
elements $K_k \subset K_h$, with mesh size $k = k(h,n) = h2^{-n}$, for
some $n \in \NN := \{0,1,2,\dots\}$. On the $k$-mesh, we define the
functions $\bar{a}_{h,k}, \bar{f}_{h,k}:\Gamma \to \mathbf{R}$ which
are piecewise constant on all $K_k$ elements
\begin{equation*}
  \bar{a}_{h,k}(x) := \sum_{K_{k}} a(x_{K_k})  \ind{K_{k}} (x) \quad \text{and} \quad  \bar{f}_{h,k}(x) := \sum_{K_{k}} f(x_{K_k})  \ind{K_{k}} (x),
\end{equation*}
where $x_{K_k}$ is the midpoint (or a corner point) of the element
$K_k$. Taking averages over the $h$-elements, we have the piecewise
constant functions $a_{h,k}$ and $f_{h,k}:\Gamma \to \mathbf{R}$
defined by
\begin{equation*}
  a_{h,k}(x) := \sum_{K_k \subset K_h} a(x_{K_k}) \left|K_k\right| / \left|K_h\right| \quad \text{and} \quad 
  f_{h,k}(x) := \sum_{K_k \subset K_h} f(x_{K_k}) \left|K_k\right| / \left|K_h\right|,
\end{equation*}
for $x \in K_h$, where $\left| K \right| = \int_{K} \dd x$ is the size
of the element $K$.  Then in variational form, we seek
$u_{h,k} \in V_h$ such that
\begin{equation}
  \label{eq:forward-with-quad}
  \int_\Gamma a_{h,k}(x) \nabla u_{h,k}(x) \cdot \nabla v_h(x) \dd x 
  = \int_\Gamma  f_{h,k}(x) v_h(x) \dd x \qquad \as 
\end{equation}
for all $v_h \in V_h$ and, similarly, we seek $\lambda_{h,k} \in V_h$
such that
\begin{equation*}
  \int_\Gamma a_{h,k}(x) \nabla \lambda_{h,k}(x) \cdot \nabla v_h(x) \dd x 
  = \int_\Gamma g v_h \dd x \qquad \as
\end{equation*}
for all $v_h \in V_h \subset V$, which corresponds to using the
quadrature rule for calculating the components of the stiffness
matrix. As the regularity of $a$ is low, the midpoint rule with a
fixed number of points per $h$-element is a reasonable strategy for
approximating the spatial average of $a$.

We are interested in estimating
\begin{equation*}
  \mexp \mathcal{Q}(g) := \mexp \left[(g , u_h - u_{h,k})\right],
\end{equation*}
the expected quadrature error in the finite element approximation of
\eqref{eq:rpde}. For a finite sample of size $M$, we will consider the
MC approximation
\begin{equation*}
  \hat{\mathcal{Q}}^{h,k}(g) = \frac{1}{M} \sum_{m=1}^{M} 
  \left( g, u_{h}(\omega_m) - u_{h,k}(\omega_m)\right).
\end{equation*}
The statistical error committed in the MC approximation is
$O_P(\sigma_s / \sqrt{M})$ where $\sigma_s$ is the standard deviation
of $(g, u_h - u_{h,k})$.  This extra statistical error contribution
can be made small, relative to the contributions from the Galerkin
error and the quadrature error, by choosing sufficiently large $M$. In
the next theorem we demonstrate that once again an assumption on
scales and a telescoping argument can be used to obtain an estimator
for the desired expected quadrature error.

\begin{theorem}\label{thm:quadScales}
  Assume that for positive constants $C$ and $\gamma$,
  \begin{equation}
    \label{eq:assumption-on-scales-quadrature}
    \left| \mexp \left[ \int_\Gamma (a_{h,k} - a_{h, k/2}) \nabla 
        u_{h,k} \cdot \nabla \lambda_{h} \dd x \right] \right| < C k^\gamma,
  \end{equation}
  that for all sufficiently small $h,k>0$,
  \begin{equation*}
    \lim_{j \to \infty} \mexp\left[ 
      \int_\Gamma (a_{h,k2^{-j}} - a_h) \nabla u_{h,k} \cdot
      \nabla \lambda_h \dd x \right] = 0,
  \end{equation*}
  and that
  \begin{equation}\label{eq:rlimit}
    \lim_{k\rightarrow 0} k^{-\gamma}\mexp \left[ 
      \int_\Gamma (f-f_{h,k})\lambda_h \dd x \right]=0.
  \end{equation}
  Then there is a constant $\hat C$ such that
  \begin{equation*}
    \left\lvert\mexp \left[ \int_\Gamma g (u_h - u_{h,k}) \dd x \right] \right\rvert
    \leq \frac{\hat C k^\gamma}{1-2^{-\gamma}}.
  \end{equation*}
\end{theorem}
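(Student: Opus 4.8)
The plan is to reproduce the dual-weighted-residual telescoping of Theorem~\ref{thm:GalerkinScales}, but with the quadrature mesh $k$ (rather than the Galerkin mesh) as the refinement parameter and with the exact discrete dual $\lambda_h$ from \eqref{eq:dual-exact} held fixed as the reference. First I would derive an exact representation of the quadrature error functional. Since $u_h-u_{h,k}\in V_h$, testing \eqref{eq:dual-exact} with $u_h-u_{h,k}$ gives $\int_\Gamma g(u_h-u_{h,k})\dd x=\int_\Gamma a_h\nabla\lambda_h\cdot\nabla(u_h-u_{h,k})\dd x$. Expanding the right-hand side and using the exact primal \eqref{eq:forward-exact} tested with $\lambda_h$ for the $u_h$ part, and the quadrature primal \eqref{eq:forward-with-quad} tested with $\lambda_h$ for the $u_{h,k}$ part, the stiffness and load contributions recombine into
\begin{equation*}
  \mexp\left[\int_\Gamma g(u_h-u_{h,k})\dd x\right]
  =\mexp\left[\int_\Gamma (f-f_{h,k})\lambda_h\dd x\right]
  +\mexp\left[\int_\Gamma (a_{h,k}-a_h)\nabla u_{h,k}\cdot\nabla\lambda_h\dd x\right].
\end{equation*}
This is the quadrature analogue of the residual identity \eqref{eq:error-density}, with the data perturbations $f-f_{h,k}$ and $a_{h,k}-a_h$ in the role of the residual.

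The load term is removed by hypothesis \eqref{eq:rlimit}, which asserts precisely that this contribution is $o(k^\gamma)$; hence it is dominated by the target bound and only feeds the constant $\hat C$. The substance is therefore the conductivity term $T:=\mexp[\int_\Gamma (a_{h,k}-a_h)\nabla u_{h,k}\cdot\nabla\lambda_h\dd x]$. Here I would telescope over dyadic refinements of the quadrature mesh, writing $a_{h,k}-a_{h,k2^{-J}}=\sum_{j=0}^{J-1}(a_{h,k2^{-j}}-a_{h,k2^{-(j+1)}})$, taking expectations of this finite partial sum, and letting $J\to\infty$, so that
\begin{equation*}
  T=\sum_{j=0}^{\infty}\mexp\left[\int_\Gamma (a_{h,k2^{-j}}-a_{h,k2^{-(j+1)}})\nabla u_{h,k}\cdot\nabla\lambda_h\dd x\right].
\end{equation*}
The second, limit hypothesis of the theorem guarantees that the tail $\mexp[\int_\Gamma(a_{h,k2^{-J}}-a_h)\nabla u_{h,k}\cdot\nabla\lambda_h\dd x]$ vanishes as $J\to\infty$, which both justifies passing the limit inside the expectation and identifies the series with $T$. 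Bounding the $j$-th increment by the scale assumption \eqref{eq:assumption-on-scales-quadrature} applied at quadrature resolution $k2^{-j}$ yields a bound of order $C(k2^{-j})^\gamma=Ck^\gamma 2^{-j\gamma}$, and summing the geometric series $\sum_{j\geq0}2^{-j\gamma}=(1-2^{-\gamma})^{-1}$ delivers the claimed estimate.

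The hard part will be reconciling the shape of each telescoped increment with the exact form of \eqref{eq:assumption-on-scales-quadrature}. The clean representation above fixes the computed solution $u_{h,k}$ and the exact dual $\lambda_h$, whereas \eqref{eq:assumption-on-scales-quadrature} pairs the one-step increment $a_{h,\kappa}-a_{h,\kappa/2}$ with the solution $u_{h,\kappa}$ at the \emph{matching} resolution. Closing this gap requires either reading the scale hypothesis as holding for the fixed $u_{h,k}$ at every refinement level, or re-telescoping in the solution itself (in the multilevel spirit, through the increments $u_{h,k2^{-(j+1)}}-u_{h,k2^{-j}}$) and absorbing the replacement of the level-matched dual $\lambda_{h,k2^{-(j+1)}}$ by $\lambda_h$ into the constant $\hat C$. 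Either route leaves the genuinely difficult content inside \eqref{eq:assumption-on-scales-quadrature}: the bound $Ck^\gamma$ holds only \emph{in expectation} and hinges on stochastic cancellation among the midpoint-quadrature fluctuations of the rough coefficient $a$, so that certifying it for a given lognormal conductivity is exactly the stochastic-sensitivity estimate that the theorem wisely takes as a hypothesis rather than attempts to prove.
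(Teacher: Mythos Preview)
Your proof is correct and follows the same route as the paper: derive the identity $\mexp[\int_\Gamma g(u_h-u_{h,k})\dd x]=\mexp[\int_\Gamma(a_{h,k}-a_h)\nabla u_{h,k}\cdot\nabla\lambda_h\dd x]+r$ via \eqref{eq:dual-exact}, \eqref{eq:forward-exact}, and \eqref{eq:forward-with-quad}, telescope the conductivity term dyadically in the quadrature mesh, sum the geometric series, and absorb the $o(k^\gamma)$ load term $r$ into $\hat C$. Regarding your concern about the level mismatch, the paper simply adopts your first reading---applying \eqref{eq:assumption-on-scales-quadrature} with $u_{h,k}$ held fixed at the original quadrature resolution while the coefficient increment ranges over levels $k2^{-j}$---without further comment, so no re-telescoping in the solution is needed.
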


{\em Proof}. Using \eqref{eq:dual-exact}, \eqref{eq:forward-exact},
and \eqref{eq:forward-with-quad}, the expected quadrature error can be
expressed as
\begin{align*}
  \mexp \left[ \int_\Gamma g (u_h - u_{h,k}) \dd x \right]
  & = \mexp \left[ \int_\Gamma (a_h \nabla \lambda_h) \cdot \nabla (u_h - u_{h,k}) \dd x \right]\\
  & = \mexp \left[ \int_\Gamma a_h \nabla u_h \cdot \nabla \lambda_h
    \dd x - \int_\Gamma a_h \nabla
    u_{h,k} \cdot \nabla \lambda_h \dd x \right] \notag\\
  & = \mexp \left[ \int_\Gamma f \lambda_h \dd x - \int_\Gamma a_h
    \nabla u_{h,k} \cdot \nabla
    \lambda_h \dd x \right] \notag \\
  & = \mexp \left[ \int_\Gamma f_{h,k} \lambda_h \dd x - \int_\Gamma
    a_h \nabla u_{h,k} \cdot \nabla
    \lambda_h \dd x \right] \notag  + r \\
  & = \mexp \left[ \int_\Gamma (a_{h,k} - a_h) \nabla u_{h,k} \cdot
    \nabla \lambda_h \dd x \right] + r \notag,
\end{align*}
where $r=\mexp \left[ \int_\Gamma (f-f_{h,k})\lambda_h \dd x \right]$
is a remainder term appearing in equation \eqref{eq:rlimit}.  Looking
at the final expression, we expand $(a_{h,k} - a_h)$ in a telescoping
series in $k$,
\begin{equation}
  \label{eq:quad-telescoping}
  \begin{split}
    &\left\lvert \mexp \left[ \int_\Gamma (a_{h,k} - a_h) \nabla
        u_{h,k} \cdot \nabla
        \lambda_h \dd x \right] \right\rvert \\
    &\quad \leq \sum_{j=0}^\infty \left| \mexp \left[ \int_\Gamma
        (a_{h, k 2^{-j}} - a_{h, k 2^{-(j+1)}})
        \nabla u_{h,k} \cdot \nabla \lambda_h \dd x \right] \right| \\
    &\quad \leq \sum_{j=0}^\infty C k^\gamma 2^{-\gamma j} = \frac{C
      k^\gamma}{1-2^{-\gamma}},
  \end{split}
\end{equation}
and the result follows by assumption \eqref{eq:rlimit} and introducing
a constant $\hat C>C$ such that
\begin{equation*}
  \frac{C k^\gamma}{1-2^{-\gamma}} + \left| r \right| \leq \frac{\hat C
    k^\gamma}{1-2^{-\gamma}}. \qquad \endproof
\end{equation*}

Theorem~\ref{thm:quadScales} motivates estimating the observable for
the quadrature error by an MC approximation of
\begin{equation}
  \label{eq:quad-decay-rate}
  \mexp \left[ \int_\Gamma (a_{h,k} - a_{h,k/2})\nabla 
    u_{h,k} \cdot \nabla \lambda_{h} \dd x \right] \approx C k^\gamma.
\end{equation}
We remark that, as was the case for Galerkin estimator, here we must
also assume that the expected quadrature estimator is bounded from
below to formally replace $C k^\gamma$ in \eqref{eq:quad-telescoping}
by \eqref{eq:quad-decay-rate} when estimating the observable for the
expected quadrature error. However, unlike in the case of the Galerkin
error, taking absolute values in the integrand produces an estimator
that is too large to be useful in practice. This is due to the
presence of cancellations in the observable for the quadrature error
even for generic choices of observable $g$.

Putting off the justification for the assumption on scales until
Theorem~\ref{thm:quadScalesMotivation}, we proceed to test the
estimator for the one-dimensional model problem
\eqref{eq:model-problem} for the observable $g=1$.  For a given
sample, $\omega_m \in \Omega$, we compute the pathwise estimator
\begin{equation*}
  Q^{h,k}_m := \frac{h^2}{2} 
  \sum_{K_h} (a_{h,k}(\omega_m) - a_{h,k/2}(\omega_m)) 
  \nabla u_{h,k}(\omega_m) \cdot \nabla\lambda_{h,\tilde{k}}(\omega_m),
\end{equation*}
where the sum is over $h$-elements, $K_h$, and $\tilde{k}$ is a
reference mesh that will receive further attention in the discussion
below. In dimension one, we use the trapezoid rule, for convenience,
and, in particular, for $k = h$ we have
\begin{equation*}
  a_{h,h}(x) = \frac{1}{2} (a(x_j) + a(x_{j+1})),
\end{equation*}
for $x \in K_{h} = [x_{j}, x_{j+1}]$, the (spatial) mean value of $a$
on the $h$-element, $K_h$. The estimator requires $a_{h,k}$ at two
levels of the quadrature mesh: $k = h2^{-n}$ and $k/2 =
h2^{-(n+1)}$. We then form the MC approximation,
\begin{equation*}
  \hat{Q}^{h,k} := \frac{1}{M} \sum_{m = 1}^{M} Q^{h,k}_m,
\end{equation*}
for a sample population of size $M$. When necessary, we distinguish
between different samples of the MC approximations for the error and
the estimator with the addition of a subscript.

Taking a second look at Figure~\ref{fig:quad-rate}, we compare the
quadrature error and the estimator for the observable $g= 1$ for the
one-dimensional model problem \eqref{eq:model-problem}. We compute MC
approximations, $\hat{\mathcal{Q}}^{h,h}$ and $\hat{Q}^{h,h}$, for
$M = 2^{13}$ samples to ensure that the statistical error, which is on
the same order as the quantity $\sigma_M := \sigma_s/ \sqrt{M}$, is
negligible compared to the quadrature error (see
Table~\ref{tbl:sample-variance}). The error observable uses
$u_h = u_{h,\tilde{k}}$ on a reference quadrature mesh of
$\tilde{k} = 2^{-24}$ and the estimator is computed using
$\lambda_h = \lambda_{h,\tilde{k}}$ on a reference quadrature mesh of
$\tilde{k} = 2^{-22}$. Here, for both the error and the estimator, we
take the quadrature mesh to match the finite element mesh so that we
can test the sensitivity of the dual solution in the estimator on the
reference quadrature mesh. Thus, in Figure~\ref{fig:quad-rate}, we
also plot $\hat{Q}^{h,h}_*$, a more computationally practical
estimator, computed using the discrete dual $\lambda_{h,h/2}$, with a
quadrature mesh of size $\tilde{k} = h/2$, instead of the reference
mesh. We observe that there is no perceivable difference between the
estimators $\hat{Q}^{h,h}$ and $\hat{Q}_*^{h,h}$. Therefore, we
redefine $\hat{Q}^{h,k} := \hat{Q}_*^{h,k}$ and employ the practical
estimator, using the discrete dual on the quadrature mesh,
$\tilde{k} = k/2$, in the sequel. Note the constant $C = 2$ is not
chosen as a best fit but merely to demonstrate that the estimator
provides a reliable estimate using a constant factor.
     
\begin{figure}[]
  \centering
  \includegraphics[width=\sfac]{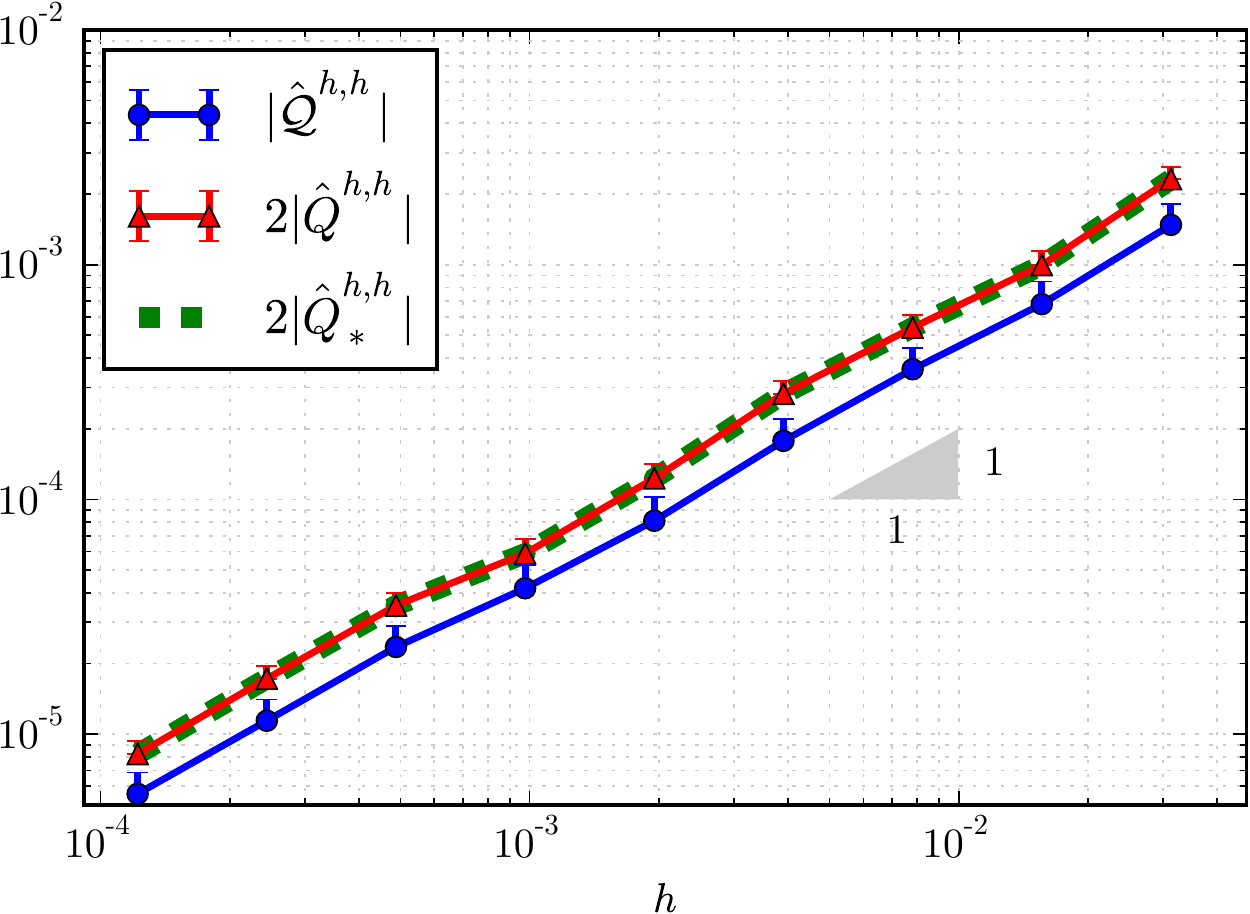}
  \caption{For problem~\eqref{eq:model-problem} for the observable
    $g=1$, the expected quadrature error committed in the finite
    element method can be estimated by local error indicators. Here
    the calculation of the mean is based on $M = 2^{13}$ samples to
    ensure that the statistical error is negligible and the one-sided
    error bars indicate $5$ $\sigma_M$-deviations from the
    corresponding mean.}
  \label{fig:quad-rate}
\end{figure}

\begin{table}[h]
  \footnotesize
  \centering
  \caption{The statistical error as measured by $\sigma_M  := \sigma_s/ \sqrt{M}$, based on $M = 2^{13}$ samples to ensure
    that the statistical error is negligible, corresponding to $\hat{\mathcal{Q}}^{h,h}$ (third row) and $\hat{Q}^{h,h}$ (fifth rows) for the observable $g=1$, is small compared to the corresponding expected quadrature error.}
  \label{tbl:sample-variance}
  \begin{tabular}{|l|rrrrrrrrr|}
    \hline
    $\log_2 h$     &  -13        & -12        & -11        & -10        & -9         & -8         & -7         & -6         & -5         \\
    \hline
    $\hat{\mathcal{Q}}^{h,h}$ & 5.6e-6 & 1.1e-5 & 2.4e-5 & 4.2-5 & 8.1e-5 & 1.8e-4 & 3.6e-4 & 6.8e-4 & 1.5e-3 \\
    $\sigma_M$ &  2.6e-7 & 5.3e-7 & 1.1e-6 & 2.1e-6 & 4.2e-6 & 8.6e-6 & 1.7e-5 & 3.4e-5 & 6.7e-5 \\
    \hline
    $\hat{Q}^{h,h}$ & 4.1e-6 & 8.6e-6 & 1.8e-5 & 2.9e-5 & 6.1e-5 & 1.4e-4 & 2.7e-4 & 5.0e-4 & 1.2e-3 \\
    $\sigma_M$ &  2.3e-7 & 4.6e-7 & 9.3e-7 & 1.8e-6 & 3.4e-6 & 7.4e-6 & 1.5e-5 & 2.9e-5 & 5.8e-5 \\
    \hline
  \end{tabular}
\end{table}

Next, we proceed to justify the assumption on scales for the
quadrature rule. One can use Malliavin calculus arguments to show that
\eqref{eq:assumption-on-scales-quadrature} has the desired decay rate
(see the proof of Theorem 3.3 in \cite{SzepessyTemponeZouraris:2001},
that provides an estimate for an adaptive time-stepping method for
SDE, containing a similar argument). In the setting of model problem
\eqref{eq:model-problem}, the argument greatly simplifies if one takes
advantage of the available representations of the primal and dual
solutions.

\begin{theorem}\label{thm:quadScalesMotivation}
  Consider the model problem~\eqref{eq:model-problem} and the
  corresponding finite element solution $u_{h,k}$ defined by equation
  \eqref{eq:forward-with-quad}. Assume that the observable $g$ is
  sufficiently smooth to ensure that $G \in L^1(0,1)$, for the
  primitive function $G$ as defined in \eqref{eq:Gdef}.  Then, the
  quadrature error assumption of scales
  \begin{equation*}
    \left| \mexp \left[ \int_\Gamma (a_{h,k} - a_{h, k/2}) \nabla
        u_{h,k} \cdot \nabla \lambda_{h} \dd x \right] \right| 
    < C k^\gamma,
  \end{equation*}
  holds with $\gamma =1$, provided the forward Euler quadrature rule,
  \eqref{eq:forwardEuler} below, is used to compute the stiffness
  matrix function $a_{h,k}$.
\end{theorem}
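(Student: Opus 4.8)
The plan is to verify the assumption on scales for the one-dimensional model problem by using the explicit representations of the discrete primal and dual solutions. The key simplification, as the theorem statement suggests, is that the forward Euler quadrature rule gives a product structure to the quadrature-perturbed conductivity $a_{h,k}$ that decouples the randomness across quadrature subintervals, much as the exponential of a Wiener or Brownian bridge increment factorizes. First I would write down the explicit quadrature rule \eqref{eq:forwardEuler} (referenced but not yet displayed) and express $u_{h,k}'$ and $\lambda_h'$ in terms of averaged conductivities, analogous to Lemma~\ref{lem:discretesolutions}, so that the integrand $(a_{h,k}-a_{h,k/2})\,u_{h,k}'\,\lambda_h'$ becomes an explicit function of the quadrature nodal values of $a$ on each $h$-element.

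\medskip

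Next I would isolate the difference $a_{h,k}-a_{h,k/2}$ on a single $h$-element. Refining the quadrature mesh from $k$ to $k/2$ amounts to splitting each quadrature subinterval and re-sampling $a$; the difference is therefore a sum of small local increments, each of which has mean zero or near-zero when conditioned appropriately, because the forward Euler rule samples $a$ at the left endpoint and the Brownian-bridge increments over disjoint subintervals are (conditionally) independent. The heart of the matter is that $\mexp[a_{h,k}-a_{h,k/2}]$ is itself $O(k)$ pointwise — this is where the factor $k^\gamma$ with $\gamma = 1$ originates — but one must be careful that $u_{h,k}'$ depends on the \emph{same} samples of $a$ used in $a_{h,k}$, so the expectation does not simply factor. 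I would handle this correlation by a stochastic sensitivity/Malliavin-type expansion as in the cited proof of Theorem~3.3 in \cite{SzepessyTemponeZouraris:2001}: expand $u_{h,k}'$ to first order in the perturbation of the conductivity, collect the leading term, and show the correction terms are higher order in $k$.

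\medskip

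The main obstacle will be controlling the correlation between the factor $(a_{h,k}-a_{h,k/2})$ and the solution-dependent weight $u_{h,k}'$. Because the model problem \eqref{eq:model-problem} is scalar in one dimension, $u_{h,k}' = 1/a_{h,k}$ on each element (by the analogue of Lemma~\ref{lem:discretesolutions}), so the product $(a_{h,k}-a_{h,k/2})/a_{h,k}$ must be analyzed directly; a naive bound would lose the mean-zero cancellation and yield only $O(k^{1/2})$. I expect to recover the sharp rate $\gamma=1$ by writing $a_{h,k} = a_{h,k/2} + (a_{h,k}-a_{h,k/2})$ in the denominator, Taylor expanding $1/a_{h,k}$ about $a_{h,k/2}$, and using that the lognormal structure makes $\mexp[(a_{h,k}-a_{h,k/2})\cdot g(a_{h,k/2})]$ vanish to leading order when $g$ depends only on the coarser sampling, by conditional independence of the newly introduced sub-sample. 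The assumption $G \in L^1(0,1)$ enters only to ensure the dual weight $\lambda_h'$ contributes a bounded, summable factor when summing the $h$-element contributions, so that after summing $O(h^{-1})$ elements one still obtains an $O(k)$ bound uniform in $h$. I would close by tracking all constants through the expansion and verifying that the remainder, after taking expectations, is genuinely $o(k)$ rather than merely $O(k)$, which pins down the leading coefficient $C$.
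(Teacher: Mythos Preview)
Your plan matches the paper's in spirit: use the explicit formulas $u_{h,k}'=1/a_{h,k}$ and $\lambda_h'=G_h/a_h$, Taylor expand to isolate a mean-zero increment, then decouple the remaining weight from that increment so that the leading cross term has vanishing expectation, with $G\in L^1(0,1)$ controlling the sum over elements.

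One step is misconfigured, though. You expand $1/a_{h,k}$ about $a_{h,k/2}$ and then appeal to ``conditional independence of the newly introduced sub-sample'' for a factor $g(a_{h,k/2})$; but $a_{h,k/2}$ is the \emph{finer} average and already depends on those new samples, so the hypothesis ``$g$ depends only on the coarser sampling'' is not met, and in fact $\mexp[a_{h,k}-a_{h,k/2}\mid\text{coarse $\sigma$-field}]$ does not vanish. The paper avoids this by passing to the log-field: it replaces $a_{h,k}$ by the piecewise-constant $\bar a_{h,k}=e^{\bar B_{h,k}}$ (which agrees with $a_{h,k}$ in the bilinear form since $u_{h,k}'$ and $\lambda_h'$ are constant on $h$-elements), so that $(\bar B_{h,k}-\bar B_{h,k/2})(x)$ is literally a single Brownian increment $\Delta B^{k/2}_\ell$ on alternating $k/2$-subintervals and zero elsewhere. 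Two Taylor expansions \emph{in that increment} then finish the job: the weight $a'(\bar B_{h,k/2})\,u_{h,k}'\,\lambda_h'$ is expanded as $\hat S(\Delta B^{k/2}_\ell)=\hat S(0)+\hat S'(\theta\,\Delta B^{k/2}_\ell)\,\Delta B^{k/2}_\ell$, where $\hat S(0)$ is genuinely independent of $\Delta B^{k/2}_\ell$ and multiplies a mean-zero Gaussian, while every remaining term carries a factor $(\Delta B^{k/2}_\ell)^2$ and contributes $O(k)$ via H\"older and fourth-moment bounds. Your final remark about sharpening the remainder to $o(k)$ to pin down $C$ is unnecessary; the theorem asks only for the $O(k)$ bound.
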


\begin{proof}
  One can represent an explicit Euler quadrature rule so that
  \begin{equation*}
    a_{h,h}(x) = a(x_i),
  \end{equation*}
  for $x \in [x_i, x_{i+1})$, and, more generally,
  \begin{equation}\label{eq:forwardEuler}
    a_{h,k}(x) = \sum_{j=1}^{h/k} \frac{k}{h} a(x_{i,j}),
  \end{equation}
  for $x \in [x_i, x_{i+1})$ and $x_{i,j} := ih+jk$. As alternatives
  to $a_{h,k}$, which are piecewise constant functions on intervals of
  length $h$, it will be sufficient to consider functions
  $\bar{a}_{h,k}$ that are piecewise constant on intervals of length
  $k$ such that
  \begin{equation*}
    \int_{0}^1 (a_{h,k}- \bar a_{h,k})\nabla u_h \cdot \nabla v_{h} \dd x = 0 
  \end{equation*}
  for all $u,v \in V_h$.  For explicit Euler quadrature, we obtain
  that
  \begin{equation*}
    \bar a_{h,k}(x) = e^{ \bar{B}_{h,k}(x) }, 
  \end{equation*}
  with
  \begin{equation*}
    \bar B_{h,k}(x) = \sum_{i=1}^{h^{-1}} \sum_{j=1}^{h/k} \ind{\{x_{i,j} < x\}} \Delta B_{i,j}^{k},
  \end{equation*}
  where $\Delta B_{i,j}^{k}$ is an increment of the Brownian bridge
  over the interval $[ih +(j -1)k, ih+jk)$. Thus, $\bar{a}_{h,k}(x)$
  is piecewise constant over $k$-intervals due to the piecewise
  constant $\bar{B}_{h,k}(x)$.
  Similarly, we have that
  \begin{equation*}
    \bar{a}_{h,k/2}(x) = e^{ \bar{B}_{h,k/2}(x) } 
  \end{equation*}
  with
  \begin{equation*}
    \bar{B}_{h,k/2}(x) = \sum_{i=1}^{h^{-1}} \sum_{j=1}^{2h/k} \ind{\{ih+jk/2 < x\}} \Delta B_{i,j}^{k/2}.
  \end{equation*}
  For working on the pointwise difference between
  $\bar{a}_{h,k/2} - \bar{a}_{h,k}$, we simplify further by reducing
  to one index, $\ell(i,j) = i 2h/k+ j$, and noting that we may write
  \begin{equation*}
    \bar{B}_{h,k}(x) = \sum_{\ell \leq 2k^{-1}} \ind{\{\ell k/2 < x\}} \ind{\{\ell \text{ is even}\}}
    ( \Delta B_{\ell-1}^{k/2} + \Delta B_{\ell}^{k/2})
  \end{equation*}
  and
  \begin{equation*}
    \bar{B}_{h,k/2}(x) = \sum_{\ell \leq 2k^{-1}} \ind{\{\ell k/2 < x\}} 
    \Delta B_{\ell}^{k/2}.
  \end{equation*}
  Consequently,
  \begin{equation*}
    (\bar{B}_{h,k} - \bar{B}_{h,k/2})(x) = \begin{cases}
      -\Delta B^{k/2}_{\lfloor 2x/k \rfloor} 
      & \text{if } \lfloor 2x/k \rfloor \text{ is odd.}\\ 
      0 
      & \text{else}.
    \end{cases}
  \end{equation*}
  Below we shall test the sensitivity of terms arising in the
  representation of the quadrature error with respect to the
  increments of the processes defined above. When convenient, we shall
  abuse the notation slightly by writing
  $a(\bar{B}_{h,k}(x)) = e^{\bar{B}_{h,k}(x)}$ for the log-Brownian
  bridge path depending on the discrete increments,
  $\bar{B}_{h,k}(x)$.

  Using this newly introduced notation, we apply a Taylor expansion
  and the Mean Value Theorem to the left-hand side of
  \eqref{eq:quad-decay-rate} to obtain
  \begin{equation}
    \label{eq:quad-decay-rate-expanded}
    \begin{split}
      \mexp &\left[ \int_0^1 \left(a(\bar{B}_{h,k}(x)) - a(\bar{B}_{h,k/2}(x))\right)  u_{h,k}' \lambda'_h \dd x \right] \\
      &= \mexp \left[ \int_0^1  a'(\bar{B}_{h,k/2}) u_{h,k}' \lambda'_h \left(\bar{B}_{h,k} - \bar{B}_{h,k/2}\right) \dd x \right] \\
      &\quad + \mexp \left[ \frac{1}{2} a''(\theta \bar{B}_{h,k/2} +
        (1-\theta)\bar{B}_{h,k}) u_{h,k}' \lambda'_h
        \left(\bar{B}_{h,k} - \bar{B}_{h,k/2}\right)^2 \dd x \right]
    \end{split}
  \end{equation}
  where $\theta = \theta(x,\omega) \in [0,1]$ denotes a Mean Value
  Theorem constant.  Assuming all needed moments are bounded (a
  reasonable assumption since, under the slight abuse of notation
  $a(B) = e^B$, we have that $a'' = a$ in the case of our simple model
  problem), the second summand on the right-hand side
  of~\eqref{eq:quad-decay-rate-expanded} may be bounded by H\"older's
  inequality as follows
  \begin{equation*}
    \begin{split}
      \int_0^1 \frac{1}{2} & \mexp \left[a^{\prime\prime}(\theta
        \bar{B}_{h,k/2} + (1-\theta)\bar{B}_{h,k}) \left(\bar{B}_{h,k}
          - \bar{B}_{h,k/2}\right)^2
        u_{h,k}' \lambda'_h \right] \dd x  \\
      & \leq \frac{1}{2}
      \int_0^1 \!\!\!  \sqrt{\mexp \left[ \left(a^{\prime\prime}(\theta \bar{B}_{h,k/2} + (1-\theta)\bar{B}_{h,k}) u_{h,k}' \lambda'_h\right)^2\right]} \sqrt{\mexp \left[ \left(\bar{B}_{h,k} - \bar{B}_{h,k/2}\right)^4 \right]} \dd x \\
      & \leq C k \int_0^1 |G_h| \dd x \leq C k
    \end{split}
  \end{equation*}
  where we have used
  \begin{equation*}
    \mexp \left[ (\bar{B}_{h,k}(x) - \bar{B}_{h,k/2}(x))^4 \right] 
    = \mexp \left[ \left(\Delta B^{k/2}_{\lfloor 2x/k \rfloor} \right)^4 
      \ind{\{ \lfloor 2x/k \rfloor \text{ is odd}\}} \right]  = O(k^2),
  \end{equation*}
  that $\lambda_h' = G_h/a_h$, cf.~Lemma~\ref{lem:discretesolutions},
  and that $G_h \in L^1(0,1)$, which implicitly follows from
  $G \in L^1(0,1)$.

  For achieving a bound of the same order for the first summand
  of~\eqref{eq:quad-decay-rate-expanded}, we begin by noting that
  \begin{equation}
    \label{eq:quad-decay-first-summand}
    \begin{split}
      \int_0^1 &\mexp \left[ a'(\bar{B}_{h,k/2}) u_{h,k}' \lambda'_h (\bar{B}_{h,k} - \bar{B}_{h,k/2}) \right] dx \\
      & = \int_0^1 \mexp \left[ a'(\bar{B}_{h,k/2}) u_{h,k}' \mexp [
        \lambda'_h \mid \Delta B^{k/2}_{\lfloor 2x/k \rfloor}] \Delta
        B^{k/2}_{\lfloor 2x/k \rfloor} \ind{\{\lfloor 2x/k \rfloor
          \text{ is odd}\}} \right] \dd x.
    \end{split}
  \end{equation}
  We introduce the notation
  \begin{equation*}
    S\left( (\Delta B^{k/2}_{\ell})_{\ell=1}^{k^{-1}} \right) := a'(\bar{B}_{h,k/2})
    u_{h,k}' \mexp \left[ \lambda'_h \mid \Delta B^{k/2}_{\lfloor 2x/k \rfloor} \right]
  \end{equation*}
  with the convenient shorthand
  \begin{equation*}
    \hat{S}(z) = S(\Delta B^{k/2}_{1}, \ldots,\Delta B^{k/2}_{\lfloor
      2x/k \rfloor -1 }, z, \Delta B^{k/2}_{\lfloor 2x/k \rfloor +1 },
    \ldots,\Delta B^{k/2}_{k^{-1}}),
  \end{equation*}
  and Taylor expand, with, once again,
  $\theta = \theta(x,\omega) \in [0,1]$ denoting a Mean Value Theorem
  coefficient,
  \begin{equation*}
    \begin{split}
      S\left( (\Delta B^{k/2}_{\ell})_{\ell=1}^{k^{-1}} \right)
      & = \hat{S}(\Delta B^{k/2}_{\lfloor 2x/k \rfloor} ) \\
      &= \hat{S}(0) + \hat{S}'(\theta \Delta B^{k/2}_{\lfloor 2x/k
        \rfloor}) \Delta B^{k/2}_{\lfloor 2x/k \rfloor}.
    \end{split}
  \end{equation*}
  Substituting the expansion of
  $\hat{S}(\Delta B^{k/2}_{\lfloor 2x/k \rfloor} )$
  into~\eqref{eq:quad-decay-first-summand}, noting that $\hat{S}(0)$
  is independent of $\Delta B^{k/2}_{\lfloor 2x/k \rfloor}$, and using
  H\"older's inequality on the other integrand yields
  \begin{equation*}
    \begin{split}
      \int_0^1 &\mexp \left[ a'(\bar{B}_{h,k/2}) u_{h,k}' \mexp \left[
          \lambda'_h \mid \Delta B^{k/2}_{\lfloor 2x/k \rfloor}\right]
        \Delta B^{k/2}_{\lfloor 2x/k \rfloor} \ind{\{\lfloor 2x/k \rfloor \text{ is odd}\}} \right] \dd x  \\
      &= \int_0^1 \mexp \left[ (\hat{S}(0) + \hat{S}'(\theta \Delta B^{k/2}_{\lfloor 2x/k \rfloor}) \Delta B^{k/2}_{\lfloor 2x/k \rfloor} \Delta B^{k/2}_{\lfloor 2x/k \rfloor} \ind{\{\lfloor 2x/k \rfloor \text{ is odd}\}} \right] \dd x \\
      & \leq  \int_0^1 \mexp \left[ \hat{S}(0)\right] \mexp \left[ \Delta B^{k/2}_{\lfloor 2x/k \rfloor} \ind{\{\lfloor 2x/k \rfloor \text{ is odd}\}} \right] \dd x \\
      & \qquad + \int_0^1 \sqrt{ \mexp \left[ \left(\hat{S}'(\theta
            \Delta B^{k/2}_{\lfloor 2x/k \rfloor})\right)^2\right] }
      \sqrt{ \mexp \left[ \left(\Delta B^{k/2}_{\lfloor 2x/k \rfloor}\right)^4 \ind{\{\lfloor 2x/k \rfloor \text{ is odd}\}} \right] } \dd x \\
      &\leq C k,
    \end{split}
  \end{equation*}
  since
  $\mexp [ \Delta B^{k/2}_{\lfloor 2x/k \rfloor} \ind{\{\lfloor 2x/k
    \rfloor \text{ is odd}\}} ] = 0$.
  Hence the assumption on scales for the quadrature error holds with
  $\gamma=1$ in the case of the simple model problem
  \eqref{eq:model-problem}.
\end{proof}

%
\section{Discussion and future work}
\label{sec:discussion}
%
In the present work, we have presented reliable and computable
estimates for the Galerkin and quadrature errors committed in
observables of the finite element approximation of a class of elliptic
PDE with rough stochastic conductivities. These estimators, based on
local error indicators, were obtained by first making an assumption on
the scales of the model problem and then utilizing a simple
telescoping argument. For the model problems under consideration, we
demonstrated that the assumptions are satisfied. Even in the case of a
simple model problem, the standard theory for constructing \emph{a
  posteriori} error estimates failed to give reliable estimators in
this setting. Therefore, the error estimates given here fill a much
needed gap by providing an important and novel computational tool for
this class of problems.

In this direction, these estimates are useful for constructing
adaptive algorithms. These estimates are a step towards the
construction of adaptive algorithms for variance reduction techniques
for MC methods for PDE with rough stochastic coefficients. As
mentioned in the introduction, the final level stopping criterion in a
MLMC, MIMC, or CMLMC method can be built upon the error estimates
derived here and a minor extension of the present theory would render
it possible to also estimate mean square errors between numerical
solutions on consecutive resolution levels.
 
%
\bibliographystyle{siam}%
\bibliography{SISC_M104426.bib}%
%

\end{document}